\documentclass[letter,11pt]{amsart}
\usepackage{amsmath,amsthm,amssymb,amsfonts}
\usepackage[pdftex]{graphicx}
\usepackage{tikz}
\usepackage[margin=1in,top=1in]{geometry}
\usepackage[latin1]{inputenc}
\usepackage{enumitem}
\usepackage{hyperref}

\newcommand{\R}{\mathbb{R}}
\newcommand{\vep}{\varepsilon}
\newcommand{\osc}{\operatorname{osc}}

\newcommand{\tr}[1]{\operatorname{Tr}(#1)}
\newcommand{\diam}[1]{\operatorname{diam}(#1)}
\newcommand{\chara}[1]{\chi_{#1}}
\newcommand{\id}{I}

\newcommand{\ellmat}{\mathcal{A}_{\lambda, \Lambda}}
\newcommand{\ellvec}{\mathcal{O}_{\lambda,\Lambda,\mu}}
\newcommand{\gcal}{\mathcal{Q}}
\newcommand{\mcal}{\mathcal{M}}
\newcommand{\tcal}{\mathcal{T}}
\newcommand{\scal}{\mathcal{S}}
\newcommand{\mao}{\alpha_{\star}}

\newtheorem{thm}{Theorem}[section]

\newtheorem{cor}[thm]{Corollary}
\newtheorem{lem}[thm]{Lemma}
\theoremstyle{definition}
\newtheorem{defn}[thm]{Definition}
\newtheorem{rem}[thm]{Remark}

\allowdisplaybreaks

\numberwithin{equation}{section}

\author[I. U. Erneta]{I\~{n}igo U. Erneta}
    \address{I\~{n}igo U. Erneta. Department of Mathematics\\
    Rutgers University\\
110 Frelinghuysen Rd., Piscataway, NJ 08854, USA}
    \email{iu40@math.rutgers.edu}

 \title{Regularity theory for fully nonlinear oblique transmission problems}

\begin{document}

\begin{abstract}
We develop a regularity theory for fully nonlinear oblique transmission problems.
Our main result establishes the optimal regularity of viscosity solutions for flat interfaces.
For constant-coefficient equations, we prove that viscosity solutions are piecewise $C^{1,\alpha}$ up to the interface.
The same regularity continues to hold for variable-coefficient problems, with sufficiently regular coefficients, in a more restrictive class of viscosity solutions.
This is the first regularity result for variable-coefficient transmission conditions.
\end{abstract}

\maketitle

\tableofcontents

\section{Introduction}

Transmission problems model systems where the configuration space is composed of multiple pieces or \emph{phases}, each characterized by different properties.
These problems arise naturally in applications involving some heterogeneity, such as composite materials in Physics, regime-switching models in Economics, or biological tissue in Biology.
From an analytical perspective, the abrupt changes in phase behavior across a separating interface are modeled by partial differential equations (PDE) with discontinuous coefficients.
This situation requires prescribing certain coupling criteria on the interface, known in the literature as transmission conditions.

\medskip

Recently in~\cite{E} we introduced a novel class of \emph{fully nonlinear oblique} transmission problems.
The defining characteristic of this class is that the transmission condition depends on the full gradient of the solution from each side of the interface.
Our model can be understood as a nonlinear coupling of oblique derivatives across the interface, whereas all previous works only considered linear Neumann conditions involving normal derivatives.

\medskip

In~\cite{E}, we developed the foundational viscosity solution framework, proving the well-posedness of constant-coefficient problems with flat interfaces.
Building upon those existence and uniqueness results, the present paper offers a comprehensive regularity theory for both constant- and variable-coefficient oblique transmission problems. 
Our central contribution establishes the piecewise $C^{1,\alpha}$ regularity of viscosity solutions, which is the best possible regularity without further assumptions.
We wish to point out that ours is the first regularity result for variable-coefficient transmission conditions, oblique or otherwise.

\medskip

\subsection{Main results}

Let $B_R = \{x \in \R^n \colon |x| < R\} \subset \R^{n}$ be the open ball of radius $R > 0$ centered at the origin.
We think of $B_R$ as being divided into two phases, the upper and lower half-balls
\[
B_{R}^{+} = B_R \cap \{x_n > 0\}\qquad \text{ and } \qquad B_{R}^{-} = B_R \cap \{x_n < 0\}.
\]
These phases are separated by the flat interface
\[
T_R = B_R \cap \{x_n = 0\}.
\]
For functions $u \colon B_R \to \R$, we use the superscript notation $u^{\pm} := u \chara{B_R^{\pm} \cup T_R}$ to denote the restrictions to each half-ball, up to the interface.

\medskip

We consider the transmission problem
\[
\begin{cases}
F^{+}(D^2 u, x ) = f^{+}(x) & \text{ in } B_R^{+},\\
F^{-}(D^2 u, x ) = f^{-}(x) & \text{ in } B_R^{-},\\
G(\nabla u^{+}, \theta \nabla u^{-}, x) =g(x) & \text{ on } T_R,\\
u^{+} = u^{-} & \text{ on } T_R,\\
\end{cases}
\]
where $F^{\pm}$ and $G$ are fully nonlinear operators (see Section~\ref{sec:visc}), $f^{\pm}$ are functions in $B_R^{\pm}$, $g$ is a function on $T_R$, and $0 \leq \theta \leq 1$ is a constant.

\medskip

Here, the second order equations in each phase are known as \emph{bulk equations}, while the first order identity on the interface is the \emph{transmission condition}.
The \emph{interpolation parameter} $\theta$
interpolates between a ``pure'' transmission problem ($\theta = 1$) and a degenerate one ($\theta = 0$) that decouples into two boundary-value problems on each phase (for the details, see Proposition 2.17 in~\cite{E}).
Finally, the last equation serves to indicate that $u$ must be continuous across the interface and will be assumed implicitly throughout the work.

\medskip

Our transmission operator $G$ can be interpreted as a nonlinear coupling of oblique derivatives, generalizing the linear condition
\[
G(\nabla u^{+}, \nabla u^{-}, x) = \gamma^{+}(x) \cdot \nabla u^{+} - \gamma^{-}(x) \cdot \nabla u^{-}.
\]
Such operators are characterized by three ellipticity constants $0 < \lambda \leq \Lambda$ and $\mu \geq 0$ quantifying the size of the coefficients.
Namely, writing $\gamma^{\pm} = (\gamma'^{\pm}, \gamma^{\pm}_{n}) \in \R^{n-1} \times \R$, we impose that:
\begin{itemize}
\item 
the normal components are controlled above and below by 
\[
\lambda \leq \gamma_{n}^{\pm}(x) \leq \Lambda \qquad \text{ for } x \in T_R;
\] 
\item 
the tangential components are bounded by
\[
|\gamma'^{\pm}(x)| \leq \mu \qquad \text{ for } x \in T_R.
\]
\end{itemize}
We give a precise definition of our nonlinear operators and the notion of uniform ellipticity in Section~\ref{sec:visc} below.

\medskip

Throughout the text, a constant is called \emph{universal} if it depends only on $n$, $\lambda$, $\Lambda$, and $\mu$.
We note in particular that a universal constant does not depend on the interpolation parameter $\theta$.
Somewhat surprisingly, our estimates in the theorems below are all independent of this parameter.

\medskip

Our first result provides a H\"{o}lder estimate for viscosity solutions to variable-coefficient problems.
For the definition of viscosity solution, see Definition~\ref{def:visc} below:

\begin{thm}
[H\"{o}lder estimate for variable coefficients]
\label{thm:holder:intro}
Let $\{F^{\pm}, G\}$ be continuous operators in $B_1^{\pm} \cup T_1$ and $T_1$, respectively, and with ellipticity constants $0 < \lambda \leq \Lambda$ and $\mu \geq 0$.
Let $\{f^{\pm}, g\}$ be continuous functions in $B_1^{\pm} \cup T_1$ and $T_1$, respectively.
Let $0 \leq \theta \leq 1$ be a constant.
Let $u \in C(B_1)$ be a viscosity solution to
\[
\begin{cases}
F^{\pm}(D^2u, x) = f^{\pm}(x) & \text{ in } B_1^{\pm},\\
G(\nabla u^{+}, \theta \nabla u^{-}, x) =g(x) & \text{ on } T_1.
\end{cases}
\]
Assume that $F^{\pm}(0,x) = 0$ for $x \in B_1^{\pm}$, and $G(0,0,x) = 0$ for $x \in T_1$.

There are universal constants $0 < \alpha < 1$ and $C > 0$ such that $u$ is $C^{\alpha}$ in $B_1$ and 
\[
\|u\|_{C^{\alpha}(\overline{B}_{3/4})} 
\leq C \left( \|u\|_{L^{\infty}(B_1)}
+\|f^{+}\|_{L^n(B_1^{+})} + \|f^{-} \|_{L^n(B_1^{-})} +\|g \|_{L^{\infty}(T_1)}
\right).
\]
\end{thm}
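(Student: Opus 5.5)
The strategy is to reduce to extremal inequalities and then run a Krylov--Safonov type oscillation decay, treating interior points, points near the interface, and the interface itself uniformly.

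\emph{Step 1: extremal inequalities.} Since $F^{\pm}(0,x)=0$ and $G(0,0,x)=0$, uniform ellipticity (Section~\ref{sec:visc}) gives that $u$ is a viscosity solution of
\begin{align*}
&\mcal^{-}_{\lambda,\Lambda}(D^2u)\le |f^{\pm}|,\qquad \mcal^{+}_{\lambda,\Lambda}(D^2u)\ge -|f^{\pm}| \quad\text{in } B_1^{\pm},\\
&\lambda\, \partial_n u^{+} - \mu|\nabla' u^{+}| - \theta\bigl(\Lambda\,\partial_n u^{-} + \mu |\nabla' u^{-}|\bigr) \le |g| \quad\text{on } T_1,
\end{align*}
together with the symmetric inequality, with the obvious Pucci-type extremal transmission operators. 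By scaling $u \mapsto u/(\|u\|_{L^\infty}+\|f^\pm\|_{L^n}+\|g\|_{L^\infty})$ we normalize so that $\|u\|_\infty\le 1$ and the data are at most a small $\delta$. The H\"older estimate then follows from an oscillation decay
\[
\osc_{B_{\rho r}(x_0)} u \le (1-\kappa)\osc_{B_r(x_0)}u + Cr^{\beta}\delta
\]
for all $x_0\in \overline B_{3/4}$ and small $r$. When $B_r(x_0)$ avoids $T_1$, this is the classical interior Krylov--Safonov estimate. When $x_0$ is close to $T_1$, we enlarge to a ball centered on $T_1$, so it suffices to treat $x_0\in T_1$.

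\emph{Step 2: weak Harnack at the interface.} For balls centered on $T_1$, I would prove an $L^\varepsilon$ lemma for nonnegative supersolutions: if $v\ge 0$ is a supersolution of the extremal class in $B_1$ and $\inf_{B_{1/4}}v\le 1$, then $|\{v>M^k\}\cap B_{1/2}|\le C(1-\kappa)^k$. The key ingredient is a measure estimate: if $v\ge0$ and $\inf_{B_{1/4}} v\le 1$, then $|\{v\le M\}\cap B_{1/2}|\ge \eta$, with $\eta$ independent of $\theta$. Iterating this via a Calder\'on--Zygmund cube decomposition is standard, because the class of extremal inequalities is invariant under rescalings $x\mapsto x_0+rx$ with $x_0\in T_1$. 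Away from the interface the interior version applies, and the cube decomposition only ever uses these two types of balls.

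\emph{Step 3: the measure estimate (main obstacle).} Here the ABP estimate must handle the oblique condition. I would try a barrier argument, which avoids an ABP on the interface entirely. Construct a radial-type function $\varphi$ with the following properties: $\varphi\le -2$ on $B_{1/4}$; $\varphi\ge0$ near $\partial B_{1}$; $\mcal^{+}(D^2\varphi)\le C\xi$ in each half-ball, with $\xi$ supported in a small ball away from $T_1$; and $\varphi$ satisfying the interface inequality strictly, with the correct sign. A natural candidate is $\varphi(x)=\psi(|x'|^2+a x_n^2)+b\,x_n$-type corrections, whose slopes are chosen so that $\partial_n\varphi^+>0$ dominates $\mu|\nabla'\varphi|$ and $-\theta\Lambda\partial_n\varphi^-\ge 0$ for every $\theta\in[0,1]$. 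The kink at $x_n=0$ is then used to make the transmission inequality hold regardless of $\theta$; this is why the constants can be independent of $\theta$.

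With $\varphi$ in hand, $w=v+\varphi$ has $\inf_{B_{1/4}} w<0$ and $w\ge 0$ near $\partial B_1$. Touching $w$ from below by a cone or paraboloid cannot happen at interface points, because the transmission inequality for $v$ plus the strict inequality for $\varphi$ contradicts the viscosity definition, by the comparison-type reasoning used in \cite{E}. Thus the ABP contact set lies in the bulk, and the interior ABP applied on each half-ball gives
\[
1\le C\bigl(|\{w\ \text{touched}\}\cap\operatorname{supp}\xi|^{1/n}+\delta\bigr).
\]
This yields $\eta$ once $\delta$ is small, with the data term $\|g\|_\infty$ absorbed by adding $C\delta x_n$-type corrections. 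Checking the sign conventions of the transmission inequality, and making the barrier work uniformly in $\mu$ and $\theta$, is where I expect the real difficulty.

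\emph{Step 4: conclusion.} Combining the weak Harnack inequality for $u-\inf u$ and $\sup u-u$ on balls centered at $T_1$ with the interior estimate gives the oscillation decay. A standard iteration then produces universal $\alpha$ and $C$, and the estimate on $\overline B_{3/4}$ follows.
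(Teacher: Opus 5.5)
There is a genuine gap. Your route differs from the paper's: you go through a Caffarelli--Cabr\'e measure estimate and an $L^{\vep}$ lemma at the interface. But the central step, Step~3, is not established, and as described it would fail.

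\emph{Step 3.} Suppose a plane $L$ with slope $p$ touches $w=v+\varphi$ from below at $x_0\in T$. Then $L-\varphi$ touches $v$ from below, and the supersolution condition gives $\tcal^{-}(p-\nabla\varphi^{+},\theta(p-\nabla\varphi^{-}))\le|g(x_0)|$.

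\emph{Wrong sign.} To contradict this, $\varphi$ needs a roof-shaped kink, $\partial_n\varphi^{+}<0<\partial_n\varphi^{-}$, so that $\tcal^{+}(\nabla\varphi^{+},\theta\nabla\varphi^{-})$ is very negative. Your choice $\partial_n\varphi^{+}>0\ge\partial_n\varphi^{-}$ gives $\tcal^{-}(\nabla\varphi^{+},\theta\nabla\varphi^{-})>0$. That is the right sign for a barrier \emph{subtracted} from a supersolution, as in the paper's $u-m\varphi$. With $w=v+\varphi$ it adds a convex kink and makes contact on $T$ easier, not impossible.

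\emph{Strictness is not enough.} The left-hand side contains $-(1+\theta)\mu|p'-\nabla'\varphi|$. The normal slopes of $\varphi$ must therefore beat, up to constants depending on $\lambda,\Lambda$, both $|p_n|$ and $\mu|p'|/\lambda$. The envelope slopes are only bounded by $|p|\lesssim |w(x_0)|/R\le\|\varphi\|_{L^\infty}/R$. So you need normal slopes $\gtrsim(1+\mu/\lambda)\|\varphi\|_{L^\infty}/R$, while keeping $\varphi\ge 0$ on the boundary, $\varphi\le-2$ inside, and $\mcal^{+}(D^2\varphi)\le C\xi$ in the bulk. This requirement is invariant under multiplying $\varphi$ by constants and under isotropic rescaling, so it is a real constraint on the shape of $\varphi$. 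You give no construction, and in round balls it is unclear one exists uniformly in $\mu/\lambda$.

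The paper meets this kind of constraint only through anisotropy. It works in thin cylinders $\gcal(\rho^{-},\rho^{+},R)$, with $\rho^{-}$ small depending on $n$, $\lambda/\Lambda$ and $\Lambda/\mu$, and $\rho^{+}=\frac{\lambda}{4\Lambda}\rho^{-}$. Normal slopes of size $1/\rho^{\pm}$ then dominate both the tangential terms and the $\theta\Lambda$ term. Note also that Theorem~\ref{thm:abp} controls the full $L^n$ norm of the data, not its restriction to the contact set. So it cannot replace the contact-set ABP your measure estimate needs.

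\emph{Step 2.} There is a smaller gap here. Dyadic subcubes of a cube centered on $T$ have $T$ as a face. The Calder\'on--Zygmund iteration therefore needs the measure estimate for every position of the interface relative to the enlarged cube, not only for balls centered on $T$ or disjoint from it.

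\emph{The paper's route.} The paper proves no measure estimate at the interface at all. It applies the classical interior Harnack inequality in the slab $B_3'\times(0,3\rho^{+})$ to bound $\sup$ by $\inf$ on $\widetilde{\gcal}^{+}(\rho^{+},2)$. It then uses the explicit piecewise quadratic barrier $m\varphi$, subtracted from $u$, together with the transmission ABP (Theorem~\ref{thm:abp}, which already handles contact on $T$) in $\gcal(\frac{3}{2}\rho^{-},\rho^{+},2)$. This pushes the infimum down to $\gcal(\rho^{-},\rho^{+},1)$. The resulting one-sided Harnack inequality suffices for oscillation decay, because $(M_{3R}-u)+(u-m_{3R})=o_{3R}$ at every point of $\widetilde{\gcal}^{+}(\rho^{+},2R)$.
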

\begin{rem}
The theorem continues to hold without assumptions $F^{\pm}(0,x) = 0$ and $G(0,0,x) = 0$,
replacing the functions $f^{\pm}$ and $g$ in the estimate by $f^{\pm} - F^{\pm}(0, \cdot)$ and $g - G(0,0,\cdot)$, respectively.
\end{rem}

Theorem~\ref{thm:holder:intro} is the fundamental ingredient for the whole regularity theory, as explained later.
In fact, we will prove a more general result, Theorem~\ref{thm:holder}, where the $C^{\alpha}$ estimate is shown to hold for functions in a universal ``solution class'' (Section~\ref{sec:visc}).
Our H\"{o}lder estimate is a consequence of a new boundary Harnack inequality obtained via appropriate piecewise quadratic barriers (see Section~\ref{sec:holder}). 

\medskip

Our second result gives the optimal regularity of constant-coefficient homogeneous problems:

\begin{thm}
[Piecewise $C^{1,\overline{\alpha}}$ regularity for constant coefficients]
\label{thm:piecewise}
Let $\{F^{\pm}, G\}$ be constant-coefficient operators with ellipticity constants $0 < \lambda \leq \Lambda$ and $\mu \geq 0$.
Let $0 \leq \theta \leq 1$ be a constant.
Let $u \in C(B_1)$ be a viscosity solution to 
\[
\begin{cases}
F^{\pm}(D^2u) = 0 & \text{ in } B_1^{\pm},\\
G(\nabla u^{+}, \theta \nabla u^{-}) =0 & \text{ on } T_1.
\end{cases}
\]
There are universal constants $0 < \overline{\alpha} < 1$ and $C > 0$ such that
$u^{\pm}$ are $C^{1,\overline{\alpha}}$ in $B_1^{\pm} \cup T_1$ and
\[
\|u^{\pm}\|_{C^{1,\overline{\alpha}}(\overline{B}^{\pm}_{1/2})} 
\leq C \left( \|u\|_{L^{\infty}(B_1)} + |F^{+}(0)| + |F^{-}(0)| + |G(0,0)|
\right).
\]
Moreover, $u$ satisfies the transmission condition in the classical sense.
\end{thm}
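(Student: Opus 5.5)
We follow the classical Caffarelli strategy: exploit the translation invariance of the constant-coefficient problem in tangential directions, and use the H\"older estimate (Theorem~\ref{thm:holder:intro}) as the basic compactness tool. First normalize. Subtract from $u$ a piecewise linear (or piecewise quadratic, in the normal variable) function that absorbs $F^{\pm}(0)$ and $G(0,0)$. Then divide by $\|u\|_{L^\infty(B_1)} + |F^{+}(0)| + |F^{-}(0)| + |G(0,0)|$. This reduces us to the case $F^\pm(0)=0$, $G(0,0)=0$, $\|u\|_{L^\infty}\le 1$, with ellipticity constants controlled universally.

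\textbf{Step 1: tangential regularity.} For $e \perp e_n$ and small $h$, the difference quotient $(u(x+he)-u(x))/|h|^{\beta}$ belongs to the universal solution class of Section~\ref{sec:visc}. This holds because $F^\pm$ and $G$ are independent of $x$ and the interface is flat, so tangential translates remain solutions, and differences of solutions lie in the class with the same $\lambda,\Lambda,\mu$. The class is also invariant under $\theta$, since $\theta\nabla u^-$ enters linearly inside $G$. Applying Theorem~\ref{thm:holder} (the solution-class version) and iterating in the standard way (cf.\ Caffarelli--Cabr\'e, Lemma 5.6), we get $u \in C^{1,\alpha}$ in tangential directions, uniformly up to $T_{3/4}$. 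In particular $u|_{T}$ is $C^{1,\alpha}$ and tangential derivatives are controlled.

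\textbf{Step 2: normal regularity.} In each half-ball, $u^\pm$ solves $F^\pm(D^2u)=0$ with Dirichlet data $u|_{T}\in C^{1,\alpha}$. Boundary $C^{1,\bar\alpha}$ estimates for fully nonlinear equations with flat boundary and $C^{1,\alpha}$ data (Krylov / Silvestre--Sirakov type) then give $u^\pm \in C^{1,\bar\alpha}(\overline B^\pm_{1/2})$ with the stated estimate. This is possible because the transmission condition has already been used in Step 1 to produce regular Dirichlet data. Notably this step does not need $\theta$, which explains the $\theta$-independence. If such boundary estimates are not available in the paper's framework, the alternative is an improvement-of-flatness argument. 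There one approximates $u$ at each scale by piecewise linear functions $\ell^\pm$ agreeing on $T$ and satisfying $G(\nabla\ell^+,\theta\nabla\ell^-)=0$. Compactness from Theorem~\ref{thm:holder:intro} together with uniqueness for the limiting problem from~\cite{E} closes the iteration.

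\textbf{Step 3: classical transmission condition.} Once $u^\pm\in C^{1,\bar\alpha}$ up to $T$, touch at a point $x_0\in T$ with test functions $\nabla u^\pm(x_0)\cdot(x-x_0)\pm\varepsilon |x_n| \pm \varepsilon^{-1}\ldots$; more precisely, use piecewise quadratic tests with a small correction. The viscosity inequalities then pass to $G(\nabla u^+(x_0),\theta\nabla u^-(x_0))=0$ as the correction vanishes, using continuity and monotonicity of $G$ in the normal components.

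The main obstacle is Step 1. The difficulty is verifying that tangential differences and difference quotients genuinely lie in the solution class up to the interface, including the ``touching from both sides'' viscosity definition on $T$, and that the iteration gains regularity uniformly in $\theta$. We would first check the difference property with the interface test-function definition in Definition~\ref{def:visc}, mimicking the interior argument via sup/inf-convolutions restricted to tangential directions.
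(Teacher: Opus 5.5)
Your Steps 1 and 2 follow the paper's proof. The tangential difference quotients $v_{\beta,h}=(u(\cdot+he)-u)/h^{\beta}$ lie in $S(\lambda,\Lambda,\mu,\theta;0,0,0)$, Theorem~\ref{thm:holder} is iterated on them until the trace $u|_{T}$ is $C^{1,\alpha}$, and boundary $C^{1,\overline{\alpha}}$ regularity for the Dirichlet problem on each side (the paper uses Milakis--Silvestre) gives the estimate. What you call the main obstacle is already settled by tools in the paper. By translation invariance, $u(\cdot+he)$ solves the same problem. Theorem~\ref{thm:comp}, applied to the pairs $(u(\cdot+he),u)$ and $(u,u(\cdot+he))$, then gives both halves of the membership; this is Lemma~\ref{lem:diff}. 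No sup/inf-convolution argument is needed. Your preliminary normalization is also unnecessary, since the solution class already absorbs $F^{\pm}(0)$ and $G(0,0)$.

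The one real gap is Step 3. In general you cannot touch $u$ at $x_0$ itself. A piecewise quadratic $P$ touching from above at $x_0\in T$ must have $\nabla' P(x_0)=\nabla' u(x_0)$. It therefore forces a one-sided bound $u-\ell\le C|x'-x_0'|^{2}$ along $T$, which a merely $C^{1,\overline{\alpha}}$ trace need not satisfy. So the viscosity inequality at $x_0$ can be vacuous, and kinks like $\vep|x_n|$ only help in the normal direction. Once contact is moved to a nearby point, you must also rule out that it lands in $B_1^{\pm}$, where you only learn about the bulk equation. Your sketch addresses neither issue.

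The paper handles both with the scale-dependent barrier
\[
w(x)=\ell(x)+C_0 r^{\overline{\alpha}-1}\Big(\vep^{-1}\big(2r|x_n|-x_n^{2}\big)+|x'-x_0'|^{2}\Big).
\]
Here $\ell^{\pm}(x)=u(x_0)+\nabla u^{\pm}(x_0)\cdot(x-x_0)$, and $|u-\ell|\le C_0r^{1+\overline{\alpha}}$ in $B_r(x_0)$. The argument runs as follows.
\begin{enumerate}
\item On $\partial B_r(x_0)$ one has $w-\ell\ge C_0 r^{\overline{\alpha}-1}|x-x_0|^{2}=C_0r^{1+\overline{\alpha}}\ge u-\ell$, and $w(x_0)=u(x_0)$. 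Hence a vertical translate of $w$ touches $u$ from above at some $x_r\in B_r(x_0)$.
\item For $\vep$ small, independent of $r$, the term $-\vep^{-1}x_n^{2}$ makes $\mathcal{M}^{+}(D^2 w;\frac{\lambda}{n},\Lambda)$ so negative that the bulk subsolution inequality would be violated. This forces $x_r\in T$.
\item At $x_r$ one has $\nabla w^{\pm}(x_r)-\nabla\ell^{\pm}=O(r^{\overline{\alpha}}\vep^{-1})$. The transmission inequality then gives $0\le G(\nabla\ell^{+},\theta\nabla\ell^{-})+Cr^{\overline{\alpha}}\vep^{-1}$.
\item Letting $r\downarrow 0$ yields $G(\nabla u^{+}(x_0),\theta\nabla u^{-}(x_0))\ge 0$. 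The reverse inequality follows in the same way using $2\ell-w$.
\end{enumerate}
The missing idea is that the test functions' Hessians may blow up like $r^{\overline{\alpha}-1}\vep^{-1}$ while their gradient corrections still vanish as $r\downarrow 0$.
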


\begin{rem}
The piecewise H\"{o}lder regularity of the gradient cannot be improved.
Indeed, by the counterexamples of Nadirashvili-Vl\u adu\c t~\cite{NV1,NV2}, we can find a uniformly elliptic fully nonlinear operator $F$ and a viscosity solution $u \in C(\overline{B}_1)$ to the equation $F(D^2 u) = 0$ in $B_1$, such that $u$ is $C^{1,\alpha}$ but not $C^{1,1}$ at the origin.
It follows that $u$ is a viscosity solution to the transmission problem
\[
\begin{cases}
F(D^2 u) = 0 & \text{ in } B_1^{\pm},\\
u_{x_n}^{+}- u_{x_n}^{-} = 0 & \text{ on } T_1,\\
\end{cases}
\]
hence we cannot extend the estimates in Theorem~\ref{thm:piecewise} to include $C^{1,1}$ norms.
Moreover, the gradient will be discontinuous in general, as can be seen from the simple transmission condition 
\[
u_{x_n}^{+} - 2 u_{x_n}^{-} = 0.
\]
It is not clear what the optimal H\"{o}lder exponent might be.
In principle, it could be smaller than the best exponent for second order fully nonlinear equations.
\end{rem}

\medskip

To prove Theorem~\ref{thm:piecewise}, we exploit the invariance of constant-coefficient transmission problems under horizontal translations.
This property allows us to show that tangential difference quotients (and, ultimately, the tangential derivatives) satisfy the H\"{o}lder estimate in Theorem~\ref{thm:holder:intro}.
We then combine this fact with the classical boundary regularity for the Dirichlet problem on each phase to obtain a $C^{1,\alpha}$ estimate up to the interface, from each side (we give the details in  Section~\ref{sec:piecewise}).

\medskip

Our third and final result establishes piecewise $C^{1,\alpha}$ regularity for variable-coefficient inhomogeneous problems.
Here, we need the stronger notion of \emph{viscosity* solution} defined with respect to a larger class of test functions (see Definition~\ref{def:viscstar} below).
With a slight abuse of notation, we define the $C^{\alpha-1}$-Morrey norm of $f^{\pm} \in C(B_R^{\pm} \cup T_R)$ by
\begin{equation}
\label{morrey}
\|f^{\pm}\|_{C^{\alpha-1}(\overline{B}^{\pm}_R)} := \sup_{x \in \R^n}\sup_{B_r(x) \subset B_R} r^{-\alpha} \|f^{\pm}\|_{L^{n}(B^{\pm}_r(x))},
\end{equation}
where $B_r(x)^{+} := B_r(x) \cap \{x_n > 0\}$ and $B_r(x)^{-} := B_r(x) \cap \{x_n < 0\}$ are not necessarily half-balls.

\begin{thm}
[Piecewise $C^{1,\alpha}$ regularity for variable coefficients]
\label{thm:main}
Let $\{F^{\pm}, G\}$ be H\"{o}lder continuous operators in $B_1^{\pm} \cup T_1$ and $T_1$, respectively, and with ellipticity constants $0 < \lambda \leq \Lambda$ and $\mu \geq 0$.
Let $\{f^{\pm}, g\}$ be H\"{o}lder continuous functions in $B_1^{\pm} \cup T_1$ and $T_1$, respectively.
Let $0 \leq \theta \leq 1$ be a constant.
Let $u \in C(B_1)$ be a viscosity* solution to the transmission problem
\[
\begin{cases}
F^{\pm}(D^2u, x) = f^{\pm}(x) & \text{ in } B_1^{\pm},\\
G(\nabla u^{+}, \theta \nabla u^{-}, x) =g(x) & \text{ on } T_1.
\end{cases}
\]
Assume that $F^{\pm}(0,x) = 0$ for $x \in B_1^{\pm}$.

If $\{G, g\}$ are $C^{\alpha}$ on $\overline{T}_1$ for some $0 <\alpha < \overline{\alpha}$, with  $\overline{\alpha}$ given by Theorem~\ref{thm:piecewise} above, then the restrictions $u^{\pm}$ are $C^{1,\alpha}$ on $B_{1}^{\pm} \cup T_1$ and satisfy
\[
\begin{split}
\|u^{\pm}\|_{C^{1,\alpha}(\overline{B}^{\pm}_{1/2})}
\leq C \left(\|u\|_{L^{\infty}(B_1)} 
+ \|G(0,0,\cdot)\|_{L^{\infty}(T_1)}
+ \|f^{+}\|_{C^{\alpha-1}(\overline{B}_1^{+})}
+ \|f^{-}\|_{C^{\alpha-1}(\overline{B}_1^{-})}
+ \|g\|_{C^{\alpha}(\overline{T}_1)}
 \right)
 \end{split}
\]
for some constant $C > 0$ depending only on $n$, $\lambda$, $\Lambda$, $\mu$, $\alpha$, $\overline{\alpha}$, the modulus of continuity of $F^{\pm}$ on $\overline{B}_{1}^{\pm}$, and the $C^{\alpha}$ seminorm of $G$ on $\overline{T_1}$.
Moreover, $u$ satisfies the transmission condition in the classical sense.
\end{thm}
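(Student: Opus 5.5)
We follow the Caffarelli perturbation scheme: approximate $u$ at every scale by solutions of frozen, constant-coefficient transmission problems, to which Theorem~\ref{thm:piecewise} gives piecewise $C^{1,\overline{\alpha}}$ estimates. Then iterate to obtain a piecewise affine approximation at every scale with error $r^{1+\alpha}$.

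\textbf{Reductions.} By scaling $u \mapsto u/K$, with $K$ the right-hand side of the estimate, and zooming in on a small ball, we may assume the following smallness conditions:
\begin{itemize}[label={}]
\item $\|u\|_{L^\infty(B_1)}\le 1$;
\item the oscillation of $F^{\pm}(M,\cdot)/(1+|M|)$ is at most $\delta$;
\item $[G]_{C^\alpha}$ and $\|f^{\pm}\|_{C^{\alpha-1}}$ are at most $\delta$;
\item $\|g - g(0)\|_{C^{\alpha}}\le\delta$.
\end{itemize}
Here $\delta$ is a universal constant to be chosen. Interior points are handled by the classical Caffarelli $C^{1,\alpha}$ theory for each $F^{\pm}$, and points near the interface by the Dirichlet boundary theory. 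So it suffices to prove the pointwise estimate at points of $T_{1/2}$, say at $0$, and then to patch interior and boundary estimates in the standard way.

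\textbf{Approximation lemma.} For every $\varepsilon>0$ there is $\delta>0$ such that, under the above smallness, there exists a solution $h$ of the frozen problem
\[
F^{\pm}(D^2h,0)=0 \text{ in } B_{3/4}^{\pm}, \qquad G(\nabla h^+,\theta\nabla h^-,0)=g(0) \text{ on } T_{3/4},
\]
with $\|u-h\|_{L^\infty(B_{1/2})}\le\varepsilon$. We prove this by compactness and contradiction. Theorem~\ref{thm:holder:intro} (the solution-class version, Theorem~\ref{thm:holder}) gives uniform $C^{\alpha_0}$ bounds for a contradicting sequence $u_k$, up to $\partial B_{3/4}$ via boundary barriers. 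Extract a uniform limit, and pass the transmission condition to the limit using stability of viscosity solutions; this is where continuity of $G$ in $x$ and the vanishing of $\delta_k$ are used. Uniqueness for the limiting constant-coefficient problem from \cite{E}, with Dirichlet data on $\partial B_{3/4}$, identifies the limit with $h$. Alternatively, solve the frozen Dirichlet-transmission problem for $h$ with boundary data $u$ directly.

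\textbf{Iteration.} By Theorem~\ref{thm:piecewise} (remark: constants $F(0)$, $G(0,0)$ are allowed), $h^{\pm}$ are $C^{1,\overline{\alpha}}$. Hence there is a piecewise affine $\ell = a + b^{\pm}\cdot x$ with
\begin{itemize}[label={}]
\item $b^+$ and $b^-$ sharing tangential components (continuity across $T$),
\item $G(b^+,\theta b^-,0)=g(0)$,
\item $|h-\ell|\le C\rho^{1+\overline{\alpha}}$ in $B_\rho$.
\end{itemize}
Choosing $\rho$ with $C\rho^{1+\overline{\alpha}}\le \tfrac12\rho^{1+\alpha}$ (possible since $\alpha<\overline{\alpha}$), and then $\varepsilon = \tfrac12\rho^{1+\alpha}$, we get $|u-\ell|\le\rho^{1+\alpha}$ in $B_\rho$.

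To iterate, consider $v(x)=(u-\ell)(\rho x)/\rho^{1+\alpha}$. It solves a problem of the same type with operators
\[
\tilde F^{\pm}(M,x)=\rho^{1-\alpha}F^{\pm}(\rho^{\alpha-1}M,\rho x),
\qquad
\tilde G(p,q,x)=\rho^{-\alpha}\bigl[G(b^++\rho^\alpha p,\theta b^-+\rho^\alpha\theta q,\rho x)-g(0)\bigr].
\]
These have the same ellipticity constants, and the right-hand sides again satisfy the smallness assumptions. The Morrey norm \eqref{morrey} is exactly invariant at exponent $\alpha$, which is why it is used.

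Here viscosity* solutions are needed. Subtracting a piecewise affine function whose two slopes differ in the normal direction is not admissible for ordinary viscosity solutions, but it is for the larger test class of Definition~\ref{def:viscstar}. Moreover, the $C^\alpha$ seminorm of $\tilde G$ in $x$ scales as $\rho^{-\alpha}\rho^\alpha[G]_{C^\alpha}$, provided $|b^{\pm}|$ stays bounded. Iterating produces $\ell_k$ with $|u-\ell_k|\le\rho^{k(1+\alpha)}$ in $B_{\rho^k}$ and geometrically convergent coefficients. This gives the pointwise piecewise $C^{1,\alpha}$ estimate at $0$, and the classical transmission condition in the limit.

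\textbf{Main obstacle.} I expect the hardest step to be the approximation lemma, specifically closing the compactness argument. Two things are needed there: uniform Hölder control up to the boundary of the approximating domain, and stability of the transmission condition under uniform convergence when the operators $G_k(\cdot,\cdot,x)$ change. A secondary difficulty is checking that the $x$-dependence of $G$ composed with the growing slopes $b^{\pm}$ keeps $\tilde G$ within a fixed small-oscillation regime throughout the iteration. This is where the constant depends on $[G]_{C^\alpha}$.
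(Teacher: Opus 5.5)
Your overall scheme (reductions, a compactness approximation lemma, iteration with $\ell_k$) matches the paper's. However, the compactness step has a genuine gap.

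\textbf{The gap in the compactness step.} Because you measure $f^{\pm}$ in the Morrey norm, which is what makes the iteration scale-invariant, the sources of a contradicting sequence satisfy $f_k^{\pm}\to 0$ only in $L^n$, not uniformly. So ``stability of viscosity solutions'' does not apply at interface points.

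Concretely, to check the transmission inequality for $u_\infty$ at $x_0\in T$, you test with a piecewise quadratic $P$ touching strictly and with $F^{\pm}(D^2P^{\pm},0)<-2\eta$ (Remark~\ref{rem:trick}). You then need the touching points $x_k$ of $P+c_k$ with $u_k$ to lie on $T$. If $x_k$ lies in the bulk, the inequality $F_k^{\pm}(D^2P^{\pm},x_k)\ge f_k^{\pm}(x_k)$ gives no contradiction, since $f_k^{\pm}(x_k)$ may be very negative.

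\textbf{How the paper closes it.} The paper adds a barrier $\psi_k$: the solution of the constant-coefficient extremal problem with $\mathcal{M}^{+}(D^2\psi_k)=-\|D^2P^{\pm}\|\beta_k^{\pm}-|f_k^{\pm}|-\varepsilon_k$ in $B_1^{\pm}$, a compensating constant in the $\mathcal{T}^{+}$ condition on $T_1$, and $\psi_k=0$ on $\partial B_1$. ABP gives $\psi_k\to 0$ uniformly. Testing with $P+\psi_k+Q$ then rules out bulk touching points, so $x_k\in T_1$ and the inequality passes to the limit.

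Using $\psi_k$ as a test function requires two facts:
\begin{itemize}
\item $\psi_k^{\pm}\in C^2$ in the interior, by Evans--Krylov; this is where the H\"older continuity of $F^{\pm}$ and $f^{\pm}$ is used.
\item $\psi_k^{\pm}\in C^1$ up to $T$, which is Theorem~\ref{thm:const}. That theorem is proved beforehand by a separate perturbation argument built on Lemma~\ref{lem:silly}.
\end{itemize}
Since $\psi_k$ is not known to be $C^{1,1}$ up to $T$, it is only admissible*. This is the one place where the viscosity* hypothesis is used.

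\textbf{Your reason for needing viscosity* is incorrect.} A continuous piecewise affine $\ell$ is itself a piecewise quadratic function in the sense of Definition~\ref{def:visc}. Hence $P$ touches $u-\ell$ if and only if $P+\ell$ touches $u$, and ordinary viscosity solutions are preserved under subtracting $\ell$. The paper does exactly this in Section~\ref{sec:inhom} without any viscosity* notion.

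As written, your argument never uses the viscosity* hypothesis or the H\"older regularity of $F^{\pm}$ and $f^{\pm}$, which signals that the step requiring them is missing. Likewise, the obstacle you single out, the $x$-dependence of $G_k$, is harmless because $\gamma_k\to 0$ uniformly; the real obstacle is the $L^n$-smallness of the bulk sources.

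If you instead normalized $\|f^{\pm}\|_{L^\infty}$ to be small, plain stability would suffice. But the resulting estimate would then involve $\|f^{\pm}\|_{L^\infty}$ rather than the Morrey norm in the statement.
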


\begin{rem}
The theorem continues to hold without the assumption $F^{\pm}(0,x) = 0$, replacing the functions $f^{\pm}$ in the estimate by $f^{\pm} - F^{\pm}(0, \cdot)$.
\end{rem}

The proof of Theorem~\ref{thm:main} is based on a delicate perturbative argument around constant-coefficient homogeneous transmission problems inspired by Caffarelli's work on fully nonlinear equations~\cite{C1,C2}.
Here, the key step involves showing that solutions are close to appropriate replacements satisfying such problems.
To prove this, we argue by contradiction-compactness, assuming that the approximation fails along a sequence of solutions, from which we can then extract a converging subsequence.
Compactness is a consequence of the interior H\"{o}lder estimate in Theorem~\ref{thm:holder:intro} combined with the boundary behavior of solutions, which we obtain in Section~\ref{sec:barrier} via suitable piecewise smooth barriers. 
The crucial point is to show that the limiting function solves a constant-coefficient problem, whereupon our uniqueness result established in~\cite{E} yields a contradiction.
For this, we use non-explicit barriers $\{\psi_k\}$ given as solutions to certain constant-coefficient inhomogeneous transmission problems.
This forces us to first prove piecewise $C^{1,\alpha}$ regularity for such problems (Theorem~\ref{thm:const}), 
which we carry out in Section~\ref{sec:inhom} by perturbation.
We give the full proof of Theorem~\ref{thm:main} in Section~\ref{sec:pert}.

\medskip

It is worth noting that the problems satisfied by the barriers $\{\psi_k\}$ from the proof above are given by convex equations.
If we could show piecewise $C^{1,1}$ regularity for such convex transmission problems (think of the Evans-Krylov $C^{2,\alpha}$ estimate), then Theorem~\ref{thm:main} would continue to hold for ordinary viscosity solutions (see Remark~\ref{rem:convex}).
Whether such an improved regularity holds is an open question that we intend to investigate in the future.

\medskip

\subsection{References and state of the art}

It was Picone~\cite{P} who brought transmission problems to the attention of the mathematical community.
Shortly after, a variational approach was proposed.
Here, one studies \emph{weak} solutions obtained by minimization of heterogeneous energy functionals.
The first variation then yields a divergence form equation with discontinuous coefficients, for which the classical De Giorgi-Nash theory is applicable.
This is both the starting point and the key element of the (by now very well understood) variational regularity theory for transmission problems.
For an account of this vast topic, we refer the reader to the monograph of Borsuk~\cite{B} and the references therein.
A summary of the regularity theory can also be found in the celebrated textbook by Ladyzhenskaya-Uraltseva~\cite[Ch.~3.16]{LU}, where such models are referred to as ``diffraction problems''.

\medskip

A non-variational framework for transmission problems is a much more recent topic that has only emerged in the last decade.
To the best of our knowledge, the first viscosity solution approach to such problems is due to De Silva-Ferrari-Salsa~\cite{DFSfb}.
In that work, the authors studied the regularity of a fully nonlinear free boundary problem.
Upon linearizing at a free boundary point, their analysis led them to a constant-coefficient homogeneous transmission problem with flat interface.
Here, they considered nonlinear bulk equations with a linear Neumann transmission condition, i.e., a coupling of normal derivatives across the interface.
The authors obtained piecewise $C^{1,\alpha}$ estimates for viscosity solutions, later extending this result to equations involving bulk source terms in~\cite{DFS}.

\medskip

The only other paper investigating  the regularity of a non-variational transmission problem is by Soria Carro-Stinga~\cite{SS}.
There, the authors proved piecewise $C^{1,\alpha}$ estimates for a constant-coefficient problem with curved interface, including source terms in both the bulk equations and the transmission condition.
As in all previous results, they only treat linear Neumann transmission conditions.
We also mention the recent work of Jesus-Soria Carro~\cite{JS}, where this result has been extended to the parabolic setting.

\medskip

Our contribution in the present paper is to extend nearly every previous regularity result to nonlinear, variable-coefficient oblique transmission conditions, a subject that had not been considered until our recent work~\cite{E}.
This task is quite involved as the implicit use of linearity, constant coefficients, and normal derivatives greatly simplified the analysis in~\cite{DFSfb,DFS,SS}.
The key qualitative difference between the two settings can be recognized in the barriers involved in the proofs.
Most notably, to prove the Harnack inequality in the Neumann setting, it suffices to consider simple radial functions that do not behave differently across the interface.
By contrast, we need to carefully construct piecewise quadratic barriers to account for both the tangential contribution of the gradient and the relative size of its normal components from each side of the interface.

\medskip

We treat curved interfaces in a forthcoming work.
A crucial tool in the Neumann setting of~\cite{SS}, where such surfaces have already been considered, is the use of certain piecewise smooth barriers obtained as solutions to the classical Dirichlet problem for fully nonlinear equations.
This gives a clean proof of both the ABP estimate and the Harnack inequality.
Unfortunately, the same method fails in the variable-coefficient oblique case.
Instead, we apply a flattening procedure to the interface and argue similarly to~\cite{E} and the present paper.
Such a transformation preserves the transmission condition, while the bulk equations involve lower order terms with coefficients blowing up at the interface.
The analysis is more technical, but we reach the same piecewise $C^{1,\alpha}$ estimate.

\medskip

To conclude, we would like to point out that our transmission problems are strongly related to boundary-value problems involving oblique derivatives.
In fact, they can be thought of as extensions of the latter, as setting the interpolation parameter to $\theta = 0$ recovers the oblique derivative problem on one side.
The earliest result on the regularity for nonlinear oblique derivative problems that we are aware of is by Lieberman-Trudinger~\cite{LT}.
Our barriers in Section~\ref{sec:holder} are inspired by that work.
The Neumann problem with fully nonlinear bulk equations was studied by Milakis-Silvestre~\cite{MS}, and later extended to oblique derivative conditions by Li-Zhang~\cite{LZ}.

\subsection{Notation}
\label{sub:notation}

Here we introduce some basic notation used throughout the manuscript.

\medskip

Given an open set $\Omega \subset \R^n$, we denote its bulk phases by
\[
\Omega^{+} = \Omega \cap \{x_n > 0\} \qquad \text{ and } \qquad \Omega^{-} = \Omega \cap \{x_n < 0\},
\]
and its flat interface by
\[
T = \Omega \cap \{x_n = 0\} = \partial \Omega^{\pm} \cap \Omega.
\]
For $x_0 \in \R^{n}$ and $R> 0$, we denote the open balls in $\R^{n}$ by
\[
B_{R}(x_0) = \{x \in \R^n \colon |x - x_0| < R\} \qquad \text{ and } \qquad B_R = B_{R}(0).
\]
When $x_0 \in T$, we also define the open balls on the interface as
\[
T_R(x_0) = B_R(x_0) \cap \{x_n = 0\}\qquad \text{ and }\qquad T_R = T_R(0).
\]

\medskip

For a continuous function $u$ in $\Omega$, we define its restrictions to each phase, up to the interface, by
\[
u^{\pm} := u \chara{\overline{\Omega}^{\pm} \cap \Omega} = u \chara{\Omega^{\pm} \cup T},
\]
where $\chara{E}$ is the characteristic function of the set $E \subset \R^{n}$.
The positive and negative parts of $u$ are
\[
u_{+} := \max\{u, 0\} \qquad \text{and} \qquad u_{-} := \max\{-u, 0\}.
\]
Note our use of superscripts for restrictions, while subscripts denote the positive and negative parts.

\medskip

For matrices $N \in \R^{n\times n}$, we always use the operator norm
\[
\|N\| = \sup_{x \in B_1 \subset \R^{n}} |N x|.
\]
The space of symmetric $n \times n$ matrices is denoted
$\scal_n \subset \R^{n\times n}$.
The set of elliptic matrices $A \in \scal_n$ satisfying the bounds $\lambda \id \leq A \leq \Lambda \id$ is written $\ellmat$.
We denote the Pucci extremal operators acting on matrices $M \in \R^{n\times n}$ by
\[
\mcal^{+}(M; \lambda, \Lambda) = \sup_{A \in \ellmat}  \tr{A M} \qquad \text{ and } \qquad \mcal^{-}(M; \lambda, \Lambda) = \inf_{A \in \ellmat}  \tr{A M}.
\]

\medskip

 We write vectors in terms of tangential and normal components as $x = (x', x_n) \in \R^{n-1} \times \R$.
The set of oblique derivative coefficients with ellipticity constants $0 < \lambda \leq \Lambda$ and $\mu \geq 0$ is given by
\[
\ellvec = 
\left\{
\gamma = (\gamma', \gamma_n) \in \R^{n-1} \times \R \colon \quad \lambda \leq \gamma_n \leq \Lambda, \quad |\gamma'| \leq \mu
\right\}.
\]
We define the extremal transmission operators $\tcal^{\pm}$ acting on vectors $\xi^{\pm} = (\xi'^{\pm}, \xi_n^{\pm}) \in \R^{n-1}\times \R$ by
\[
\begin{split}
\tcal^{+}(\xi^{+}, \xi^{-}) 
&= \tcal^{+}(\xi^{+}, \xi^{-}; \lambda,\Lambda,\mu)
= \sup_{\gamma^{\pm} \in \ellvec} \left[\gamma^{+} \cdot \xi^{+} - \gamma^{-} \cdot \xi^{-}\right],\\
\tcal^{-}(\xi^{+}, \xi^{-})  
&= \tcal^{-}(\xi^{+}, \xi^{-}; \lambda,\Lambda,\mu)
= \inf_{\gamma^{\pm} \in \ellvec} \left[\gamma^{+} \cdot \xi^{+} - \gamma^{-} \cdot \xi^{-}\right].
\end{split}
\]
For reference, we note that these operators can be written explicitly as
\begin{equation}
\label{tminus}
\begin{split}
\tcal^{+}(\xi^{+}, \xi^{-}) &= 
\Lambda (\xi_n^{+})_{+} - \lambda (\xi_n^{+})_{-}
- \lambda (\xi_n^{-})_{+}+ \Lambda (\xi_n^{-})_{-}
+ \mu |\xi'^{+}| + \mu |\xi'^{-}|,\\
\tcal^{-}(\xi^{+}, \xi^{-}) &= 
\lambda (\xi_n^{+})_{+} - \Lambda (\xi_n^{+})_{-}
- \Lambda (\xi_n^{-})_{+}+ \lambda (\xi_n^{-})_{-}
- \mu |\xi'^{+}| - \mu |\xi'^{-}|,
\end{split}
\end{equation}
depending on the sign of the normal components $\xi^{\pm}_{n}$.
The operators $\tcal^{\pm}$ were introduced in~\cite{E}.

\medskip

Derivatives are written as subscripts, e.g., $u_{x_i}$, $u_{x_i x_j}$, and so on.
We also use the gradient $\nabla u = (u_{x_1}, \ldots, u_{x_n})$ vector 
and the tangential gradient $\nabla' u = (u_{x_1}, \ldots, u_{x_{n-1}}, 0)$.

\medskip

\subsection{Outline}
In Section~\ref{sec:visc} we recall the framework of viscosity solutions and some fundamental results from our previous work~\cite{E}.
Section~\ref{sec:holder} establishes a boundary Harnack inequality for functions in the solution class, whence Theorem~\ref{thm:holder:intro} follows.
Section~\ref{sec:piecewise} is devoted to the proof of Theorem~\ref{thm:piecewise}.
In Section~\ref{sec:barrier} we use barriers to prove Theorem~\ref{thm:barriers}, an equicontinuity result crucial to the perturbation theory.
Section~\ref{sec:inhom} is devoted to proving Theorem~\ref{thm:const}, an auxiliary regularity result needed in the proof of the main theorem.
In Section~\ref{sec:pert} we establish Theorem~\ref{thm:main} via perturbation.
Appendix~\ref{app:boundary} contains an accessory lemma on gluing interior and boundary moduli of continuity used in the proof of Theorem~\ref{thm:barriers}.

\medskip

%%%%%%%%%%%%%%%%%%%%%%%%%%%%%%%%%%%%%%%%%

\section{Framework and basic tools}
\label{sec:visc}

Here, we recall the framework of viscosity solutions as well as various fundamental results from our previous paper~\cite{E}.
Given an open set $\Omega \subset \R^n$, we consider the transmission problem
\begin{equation}
\label{eq:nl}
\begin{cases}
F^{\pm} (D^2 u, x) = f^{\pm}(x) & \text{ in } 
\Omega^{\pm},\\
G(\nabla u^{+}, \theta \nabla u^{-}, x) = g(x)
 & \text{ on } T,
\end{cases}
\end{equation}
where the functions $F^{\pm}(M, \cdot)$ and $f^{\pm}$ are continuous in $\Omega^{\pm} \cup T$,
the functions $G(\xi^{+}, \xi^{-}, \cdot)$ and $g$ are continuous on $T$, and $0 \leq \theta \leq 1$ is a constant.
We assume that the operators $F^{\pm}$ and $G$ are uniformly elliptic, in the sense that there are constants $0 < \lambda \leq \Lambda$ and $\mu \geq 0$ such that:
\begin{itemize}
\item For any symmetric matrices $M$ and $N$, with $N$ nonnegative definite, we have
\[ 
\lambda \|N\| \leq F^{\pm}(M + N,x) - F^{\pm}(M,x) \leq \Lambda \|N\|;
\]
\item
For any vectors $\xi^{\pm}$ and $\eta^{\pm} = (\eta'^{\pm}, \eta_n^{\pm})$ in $\R^{n} = \R^{n-1} \times \R$, with $\eta_n^{\pm} \geq 0$, we have
\[
G(\xi^{+} + \eta^{+}, \xi^{-} + \eta^{-}, x) \leq G(\xi^{+}, \xi^{-}, x) + \Lambda \eta^{+}_n - \lambda \eta^{-}_n  + \mu( |\eta'^{+}| + |\eta'^{-}|)
\]
and
\[
G(\xi^{+} + \eta^{+}, \xi^{-} + \eta^{-}, x) \geq 
G(\xi^{+}, \xi^{-}, x) + \lambda \eta^{+}_n - \Lambda \eta^{-}_n 
- \mu (|\eta'^{+}| + |\eta'^{-}|).
\]
\end{itemize}

\medskip

Let $u \in C(\Omega)$.
We say that 
a continuous function $\varphi$ \emph{touches} $u$ \emph{ from above} (resp. \emph{from below}) at a point $x_0 \in \Omega$ if $\varphi(x_0) = u(x_0)$ and 
\[
\varphi(x) \geq u(x) \qquad \text{(resp. } \varphi(x) \leq u(x))
\]
for all $x$ in a neighborhood of $x_0$.
It touches \emph{strictly} if the inequalities are strict for $x \neq x_0$.

\medskip

By a \emph{piecewise quadratic function}, we mean a continuous function $P \in C(\R^{n})$ such that each restriction $P^{\pm}$ is a quadratic polynomial 
$P^{\pm}(x) = \frac{1}{2} M^{\pm} (x-x_0) \cdot (x-x_0) + a^{\pm} \cdot (x-x_0) + c$
for some symmetric matrices $M^{\pm} \in \scal_n$, vectors $a^{\pm} \in \R^{n}$, and constant $c \in \R$.

\medskip

We recall the following notion of viscosity solution introduced in~\cite{E}:
\begin{defn}[Viscosity solution]
\label{def:visc}
A continuous function $u \in C(\Omega)$ is a 
\emph{viscosity subsolution} (resp. \emph{supersolution}) to~\eqref{eq:nl} if whenever a piecewise quadratic function $P$ touches $u$ from above (resp. below) at $x_0 \in \Omega$ and:
\begin{itemize}
\item $x_0 \in \Omega^{\pm}$, then
\[
F^{\pm}(D^2 P^{\pm}, x_0) \geq f^{\pm}(x_0) \qquad (\text{resp. } F^{\pm}(D^2 P^{\pm}, x_0) \leq f^{\pm}(x_0));
\]
\item $x_0 \in T$, then
\[
G(\nabla P^{+}(x_0), \theta \nabla P^{-}(x_0), x_0) \geq g(x_0) 
\qquad 
\text{(resp. } 
G(\nabla P^{+}(x_0), \theta \nabla P^{-}(x_0), x_0) \leq g(x_0) ).
\]
\end{itemize}
We say that $u$ is a \emph{viscosity solution} to~\eqref{eq:nl} if it is both a viscosity subsolution and supersolution.
\end{defn}

\begin{rem}
\label{rem:trick}
When $x_0 \in T$, by a standard device, we may always assume that $P$ touches $u$ strictly and additionally satisfies
\[
F^{\pm}(D^2 P, x_0) < - \vep^{-1} 
\qquad 
\text{(resp. } 
F^{\pm}(D^2 P, x_0) > \vep^{-1}
\text{)}
\]
in the subsolution (resp. supersolution) condition above, with $\vep >0$ as small as we like; see Lemma~2.6 in~\cite{E} for the details.
\end{rem}

\medskip

The extremal operators $\{\mcal^{\pm}, \tcal^{\pm}\}$ introduced in the notation section (see subsection~\ref{sub:notation} above) give rise to universal viscosity subsolution and supersolution classes:
\begin{defn}
We say that $u \in \underline{S}(\lambda,\Lambda,\mu,\theta; f^{+}, f^{-}, g)$\emph{ in }$\Omega$, if $u$ is a viscosity subsolution to 
\[
\begin{cases}
\mcal^{+}(D^2 u; \frac{\lambda}{n}, \Lambda) \geq f^{\pm}(x) & \text{ in } \Omega^{\pm},\\
\tcal^{+}(\nabla u^{+}, \theta \nabla u^{-}; \lambda,\Lambda,\mu) \geq g(x) & \text{ on } T.
\end{cases}
\]
Similarly, we say that $u \in \overline{S}(\lambda,\Lambda,\mu,\theta; f^{+}, f^{-}, g)$\emph{ in }$\Omega$, if $u$ is a viscosity supersolution to
\[
\begin{cases}
\mcal^{-}(D^2 u; \frac{\lambda}{n}, \Lambda) \leq f^{\pm}(x) & \text{ in } \Omega^{\pm},\\
\tcal^{-}(\nabla u^{+}, \theta \nabla u^{-}; \lambda,\Lambda,\mu) \leq g(x) & \text{ on } T.
\end{cases}
\]
We also define the solution classes
\[
S(\lambda,\Lambda,\mu,\theta; f^{+}, f^{-}, g) = \underline{S}(\lambda,\Lambda,\mu,\theta; f^{+}, f^{-}, g) \cap \overline{S}(\lambda,\Lambda,\mu,\theta; f^{+}, f^{-}, g)
\]
and
\[
S^{\star}(\lambda,\Lambda,\mu,\theta; f^{+}, f^{-}, g) = \underline{S}(\lambda,\Lambda,\mu, \theta; -|f^{+}|, - |f^{-}|, -|g|) \cap \overline{S}(\lambda,\Lambda,\mu, \theta; |f^{+}|, |f^{-}|, |g|).
\]
\end{defn}

\medskip

To conclude this section, we list several fundamental results obtained in our previous work~\cite{E}.
First we have the well-posedness of the Dirichlet problem for constant-coefficient transmission problems in balls:

\begin{thm}[Existence and uniqueness -- Theorem~1.1 in~\cite{E}]
\label{thm:wellposed}
Let $\{F^{\pm}, G\}$ be constant-coefficient uniformly elliptic operators.
Let $f^{\pm} \in C(\overline{B}_1^{\pm})$, $g \in C(\overline{T}_1)$, and $\phi \in C(\partial B_1)$.
Let $0 \leq \theta \leq 1$ be a constant.

There exists a unique viscosity solution $u \in C(\overline{B}_1)$ to the transmission problem
\[
\begin{cases}
F^{\pm}(D^2u) = f^{\pm}(x) & \text{ in } B_1^{\pm},\\
G(\nabla u^{+}, \theta \nabla u^{-}) =g(x) & \text{ on } T_1,\\
u = \phi & \text{ on } \partial B_1.
\end{cases}
\]
\end{thm}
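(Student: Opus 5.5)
Since this statement is quoted from~\cite{E}, I only sketch how I would prove it. The plan has two parts: a comparison principle, which gives uniqueness, and Perron's method, which gives existence.

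\emph{Comparison.} The claim is: if $u$ is a viscosity subsolution and $v$ a viscosity supersolution in $B_1$, with $u \le v$ on $\partial B_1$, then $u \le v$ in $\overline{B}_1$. I would reduce this to a maximum principle for the universal classes, in two steps.

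\emph{Step 1.} Show that $w = u - v$ belongs to $\underline{S}(\lambda,\Lambda,\mu,\theta;0,0,0)$ in $B_1$. Here the constant coefficients of $F^{\pm}$ and $G$ are essential, since they let us compare the two equations at a single point.
\begin{itemize}
\item I would regularize $u$ by sup-convolution and $v$ by inf-convolution. These are semiconvex and semiconcave respectively, so they are punctually second-order differentiable almost everywhere in each phase.
\item In each phase $B_1^{\pm}$, the bulk inequality $\mcal^{+}(D^2 w) \ge 0$ then follows by the standard Jensen/Caffarelli--Cabr\'e argument.
\item On $T_1$, suppose a piecewise quadratic $P$ touches $w$ from above at $x_0$. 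By Remark~\ref{rem:trick} we may assume the touching is strict and that $P$ has very negative bulk Pucci values. Then perturb $P$ by a small affine function so that the maximum of $w - P$ is attained at interface points where both regularized functions are touched by piecewise quadratics.
\item Subtracting the two transmission inequalities and using the ellipticity bounds on $G$ then gives $\tcal^{+}(\nabla P^{+},\theta\nabla P^{-}) \ge 0$.
\end{itemize}

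\emph{Step 2.} Prove the maximum principle for the class: if $w \in \underline{S}(\lambda,\Lambda,\mu,\theta;0,0,0)$ and $w \le 0$ on $\partial B_1$, then $w \le 0$ in $B_1$.
\begin{itemize}
\item Suppose instead $\max w > 0$. Add the strict supersolution-type perturbation
\[
\vep\bigl(|x|^2 - 1 + h(x)\bigr),\qquad h(x) = -a\,x_n \text{ for } x_n > 0,\quad h(x) = b\,x_n \text{ for } x_n<0.
\]
This can be arranged to keep the positive maximum in the interior.
\item At an interior maximum in $B_1^{\pm}$, the Pucci inequality for the test function $P = \max w - \vep(|x|^2 + h)$ is violated, because $\mcal^{+}(-2\vep I) < 0$.
\item At a maximum on $T_1$, the normal components of $\nabla P^{\pm}$ are approximately $\vep a$ and $-\vep\theta b$. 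By~\eqref{tminus}, $\tcal^{+}$ is then at most $-\vep\lambda(a+\theta b) + C\vep\mu|x'|$.
\item Choosing $a$ large relative to $\mu$ makes this negative, which contradicts the subsolution property.
\end{itemize}

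\emph{Existence.} I would use Perron's method: take $u$ to be the supremum of all subsolutions lying below a fixed supersolution that has boundary values $\phi$.
\begin{itemize}
\item The standard bump construction shows the upper semicontinuous envelope of $u$ is a subsolution and the lower semicontinuous envelope is a supersolution. At interface points the bump is built from piecewise quadratics, using the same ellipticity inequalities for $G$.
\item Comparison then forces the two envelopes to coincide, so $u$ is continuous.
\item To attain the boundary data $\phi$, I need barriers at every $x_0 \in \partial B_1$. At points off the interface, the classical exterior-ball Pucci barrier $|x-y|^{-p} - |x_0-y|^{-p}$ works.
\item At the corner points $\partial B_1 \cap \{x_n = 0\}$, I would multiply this radial barrier by a piecewise linear factor in $x_n$, modelled on $h$ above. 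This makes it a supersolution of the transmission condition, with a strict inequality, uniformly in $\theta$.
\end{itemize}

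\emph{Main obstacle.} I expect the hardest step to be the interface part of Step 1, showing that $u - v$ satisfies the extremal transmission inequality. The sup- and inf-convolutions mix the two phases near $T_1$, and the touching points for $u$ and $v$ may fall on different sides of the interface. My first attempt would be to regularize only in the tangential variables $x'$, which preserves both phases and is compatible with the flat geometry. I would handle the normal direction by a doubling-of-variables argument restricted to $x_n$, penalized with $|x_n - y_n|^2/\delta$ and using the piecewise linear correction $h$ to push the maxima onto matching sides.
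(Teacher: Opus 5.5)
Your architecture matches the ingredients this paper imports from \cite{E}: comparison in the form $u-v\in\underline{S}(\lambda,\Lambda,\mu,\theta;0,0,0)$ (Theorem~\ref{thm:comp}), a maximum principle for that class (Theorem~\ref{thm:abp}), and boundary barriers (the Lemma~5.2 barrier quoted in the proof of Theorem~\ref{thm:barriers}). However, two essential pieces are wrong or missing.

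\textbf{The corner barrier fails.} At $x_0\in\partial B_1\cap\{x_n=0\}$ the exterior normal is $x_0$ itself, so the center $y$ of any exterior ball at $x_0$ lies in $\{x_n=0\}$.
\begin{itemize}
\item The radial barrier $H$ therefore has $H_{x_n}=0$ on $T_1$, while $|\nabla' H(x_0)|=p|x_0-y|^{-p-1}>0$.
\item Multiply by a piecewise linear $m(x_n)$ with $m(0)>0$. On $T_1$, $Hm$ has tangential gradient $m(0)\nabla'H$ and one-sided normal derivatives $H\,m'(0^{\pm})$. The latter vanish at $x_0$ because $H(x_0)=0$.
\item By~\eqref{tminus}, $\tcal^{+}(\nabla(Hm)^{+},\theta\nabla(Hm)^{-})\to(1+\theta)\mu\,m(0)|\nabla'H(x_0)|>0$ as $x\to x_0$ along $T_1$ whenever $\mu>0$. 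The same obstruction hits the sign-reversed version with $\tcal^{-}$.
\end{itemize}
This is not a matter of tuning. Take the model corner: $n=2$, $x_0=e_1$, polar coordinates $(r,\phi)$ about $x_0$, upper phase $\phi\in(\pi/2,\pi)$, $\theta=0$, $\gamma^{+}=(-\mu,\lambda)$. The function $r^{\beta}\sin(\beta(\phi-\pi/2))$ with $\tan(\beta\pi/2)=\lambda/\mu$ has these properties:
\begin{itemize}
\item it is positive and harmonic;
\item it vanishes on $\{\phi=\pi/2\}$, the flattened $\partial B_1$;
\item it satisfies $\gamma^{+}\cdot\nabla u^{+}=0$ on the interface;
\item its exponent satisfies $\beta<1$.
\end{itemize}
So the behavior at the edge $\partial B_1\cap\{x_n=0\}$ is in general only H\"older, and a barrier that is $O(|x-x_0|)$ cannot be expected. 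This is why the barrier from \cite{E} used in Theorem~\ref{thm:barriers} only satisfies $h\le C|x-x_0|^{\alpha}$ with $\alpha<1$. You need a genuinely non-Lipschitz construction there.

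\textbf{The interface comparison is not proved.} The interface part of Step~1 is exactly Theorem~\ref{thm:comp} (Theorem~4.3 of \cite{E}), the core of uniqueness, and you leave it as a plan. The obstacles are concrete.
\begin{itemize}
\item Full sup/inf-convolutions do not preserve the sub/supersolution property near $T_1$. The problem is invariant only under horizontal translations, which is why Lemma~\ref{lem:diff} uses horizontal vectors. A vertical shift turns an interface touching into a bulk touching by a function kinked inside a phase, and vice versa.
\item Tangential convolutions preserve the problem but give semiconvexity only in $x'$, so there is no Alexandrov/Jensen argument in $x_n$.
\item More fundamentally, $T_1$ is a null set. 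A positive-measure argument of Jensen type cannot by itself produce interface points where both regularizations are touched by piecewise quadratics with the correct one-sided normal slopes.
\item Remark~\ref{rem:trick}, with the bulk part, keeps the maximum of $w-P$ on $T_1$. But a doubling in $x_n$ can still split the touching points for $u$ and $v$ into one interface point and one bulk point, where the two inequalities cannot be subtracted. Your cure via the correction $h$ is not carried out.
\end{itemize}
For Perron you would also need this comparison for the semicontinuous envelopes, not only for continuous functions as in Theorem~\ref{thm:comp}.

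Finally, Step~2 has a sign error. With $h=-a(x_n)_{+}-b(x_n)_{-}$ you get $P^{+}_{x_n}=\vep a$ and $\theta P^{-}_{x_n}=-\vep\theta b$. Then~\eqref{tminus} gives $\tcal^{+}=\vep\Lambda(a+\theta b)+2(1+\theta)\vep\mu|x'|>0$, so there is no contradiction. Use the convex kink $h=a(x_n)_{+}+b(x_n)_{-}$ instead. This gives $\tcal^{+}=-\vep\lambda(a+\theta b)+2(1+\theta)\vep\mu|x'|<0$ once $a>4\mu/\lambda$, and the boundary comparison still works for small $\vep$. With that fix, Step~2 is a valid and more elementary substitute for the ABP estimate at zero right-hand side.
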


\medskip

Next, we recall our ABP maximum principle for functions in the supersolution class, which is a basic tool in the proofs of all our estimates below:

\begin{thm}[ABP -- c.f. Theorem~3.1 in~\cite{E}]
\label{thm:abp}
Let $u \in \overline{S}(\lambda,\Lambda,\mu, \theta; f^{+}, f^{-}, g)$ in $B_R$, and assume that $u \in C(\overline{B_R})$.
There is a universal constant $C > 0$ such that
\[
\sup_{B_R} u_{-} \leq \sup_{\partial B_R} u_{-} + C R \left( 
\|f^{+}_{+}\|_{L^{n}(B_R^{+})} + \|f^{-}_{+}\|_{L^{n}(B_R^{-})} + \|g_{+}\|_{L^{\infty}(T_R)}
 \right).
\]
\end{thm}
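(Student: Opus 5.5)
We reduce to $R=1$ by the scaling $u_R(x)=u(Rx)$. This rescales $f^{\pm}$ by $R^2$ and $g$ by $R$, and it is exactly the scaling that produces the factor $R$ in front of the $L^n$ norms (which pick up $R^{-1}$) and of $\|g_+\|_{L^\infty}$. After subtracting $\sup_{\partial B_1}u_-$, we may assume $u\ge 0$ on $\partial B_1$. Set $M=\sup_{B_1}u_->0$. The aim is to show $M\le C(\|f^+_+\|_{L^n}+\|f^-_+\|_{L^n}+\|g_+\|_{L^\infty})$, adapting the classical Caffarelli--Cabr\'e argument phase by phase.

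\textbf{Step 1 (handling the interface term).} Affine functions cannot be used naively to test at interface points: for an affine $L$ with gradient $\xi$, the quantity $\tcal^-(\xi,\theta\xi)$ need not be positive when $\theta$ is close to $1$. We therefore first absorb $g$ by a kinked correction. Put $w=u+h$, where $h=-A|x_n|$ with $A=C_0\|g_+\|_{L^\infty(T_1)}$, so that $|h|\le A$ on $B_1$. Suppose an affine $L$ touches $w$ from below at $x_0\in T_1$. Then the piecewise linear function $L+A|x_n|$ touches $u$ from below at $x_0$, and its normal slopes are $\xi_n+A$ from above and $\xi_n-A$ from below. By the explicit formula \eqref{tminus}, $\tcal^-$ gains $\lambda A(1+\theta)$ from these kinks, and this should beat $g_+$ provided $\xi$ is restricted to a suitable set $K_M$ of admissible slopes. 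I would take $K_M$ inside a cone where $|\xi'|$ and $|\xi_n|$ are controlled by $A$. The supersolution condition at $x_0$, namely $\tcal^-\le g(x_0)$, is then violated. Hence no contact point of the convex envelope with slope in $K_M$ lies on $T_1$.

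\textbf{Step 2 (usual ABP off the interface).} Let $\Gamma$ be the convex envelope of $-w_-$ extended by $0$ to $B_2$, and let $\mathcal{C}$ be the contact set of those points whose slope lies in $K_M$. By Step 1, $\mathcal{C}\subset B_1^+\cup B_1^-$. There $w$ is, up to the smooth correction $h$ (which is affine in each phase, so it does not change $D^2$), a supersolution of $\mcal^-(D^2w;\lambda/n,\Lambda)\le f^\pm$. The standard argument then gives that $\Gamma$ is $C^{1,1}$ on $\mathcal{C}$, with $\det D^2\Gamma\le C(f^\pm_+)^n$ there. The area formula yields
\[
|K_M|\le \int_{\mathcal{C}}\det D^2\Gamma\le C\left(\|f^+_+\|^n_{L^n}+\|f^-_+\|^n_{L^n}\right).
\]
If $|K_M|\ge c(\lambda,\Lambda,\mu)(M-A)^n$, we obtain $M\le A+C(\|f^+_+\|_{L^n}+\|f^-_+\|_{L^n})$, which is the claimed estimate.

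\textbf{Main obstacle.} The hard step is reconciling Steps 1 and 2. The set $K_M$ must contain a set of measure comparable to $M^n$, which is needed in Step 2, while every slope in it must produce a strict violation of the transmission inequality, which is needed in Step 1. The difficulty is that the interface gain $\lambda A(1+\theta)$ is of size $\|g\|$, whereas the bad terms $\Lambda\theta(\xi_n-A)_+$ and $\mu|\xi'|$ are of size $M$.

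I would first try to make the kink proportional to the slope itself. That is, rather than fixing $h$ in advance, I would test at interface points with $L+\kappa|\xi|\,|x_n|$ for a universal $\kappa$, restrict to $\xi$ with $\xi_n\le 0$ and $|\xi'|\le \lambda|\xi_n|/(4\mu)$, and use that $\Gamma\le w$ with $w\ge -M$ only loses $O(\kappa M)$ in the sup. If this proportional version fails, the fallback is a sliding argument with the piecewise quadratic barriers of Section~\ref{sec:holder}.
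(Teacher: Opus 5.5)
\textbf{Gap: the interface step is never carried out.} Your scaling reduction and the phase-wise part of Step~2 are fine. The one genuinely new step is to exclude interface contact points for a set of slopes of measure $\gtrsim M^n$, and neither version you propose does this. Your fallback sliding argument is named but not executed.

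With $A=C_0\|g_+\|_{L^\infty}$, the kink gains only $\lambda(1+\theta)A$. For $\theta=1$, however, the affine part gives $\tcal^-(\xi,\xi)=-(\Lambda-\lambda)|\xi_n|-2\mu|\xi'|$. So Step~1 only excludes slopes with $|\xi|\lesssim\|g_+\|$, as you note yourself.

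The repair with a kink $\kappa|\xi|\,|x_n|$ fails for two reasons. First, a supporting plane $L$ of the convex envelope of $w$ only satisfies $L\le w$ near $x_0$. To conclude that $L+\kappa|\xi||x_n|$ lies below $u$, that kink must already have been subtracted from $u$ \emph{before} forming the envelope. It therefore cannot depend on the slope of whichever plane ends up touching. A slope-dependent family would need a different, sliding-type envelope and a new Jacobian estimate, which you do not supply. Second, even granting the test function, for $\kappa<1$ it gives nothing on your cone. For $\xi=-s e_n$ and $\theta=0$, the normal slopes are $-(1-\kappa)s$ and $0$, so $\tcal^-=-\Lambda(1-\kappa)s<0$ and no violation occurs. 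The kink has to dominate the slopes, not be a small fraction of them.

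\textbf{Missing idea: make the kink proportional to $M$.} Your remark about an $O(\kappa M)$ loss points in the right direction. Make the kink proportional to $M$ itself, which is a fixed number, and use only slopes in a ball of radius $\vep M$ with $\vep$ universal. Set $w:=u+\frac{M}{2}(1-|x_n|)$. Then $w\ge u\ge 0$ on $\partial B_1$ and $\inf_{B_1}w\le -M/2$. Moreover, $w-u$ is affine in each phase, so $w$ is still a supersolution of $\mcal^-(D^2w;\frac{\lambda}{n},\Lambda)\le f^{\pm}$ in $B_1^{\pm}$.

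Suppose an affine $L$ with $|\nabla L|=|\xi|\le\vep M$ touches $w$ from below at $x_0\in T_1$. Then $L+\frac M2|x_n|-\frac M2$ is a piecewise linear function touching $u$ from below at $x_0$, with gradients $\xi\pm\frac M2 e_n$. By \eqref{tminus}, uniformly in $\theta$,
\[
\tcal^-\big(\xi+\tfrac M2 e_n,\theta(\xi-\tfrac M2 e_n)\big)\ \ge\ \lambda\big(\tfrac12-\vep\big)M-2\mu\vep M\ \ge\ \tfrac{\lambda}{4}M
\quad\text{for } \vep\le\tfrac{\lambda}{4(\lambda+2\mu)} .
\]
Hence one of two alternatives holds.
\begin{enumerate}
\item[(a)] $M\le 4\|g_+\|_{L^\infty}/\lambda$, and the estimate holds directly.
\item[(b)] No contact point with slope in $\overline{B}_{\vep M}$ lies on $T_1$.
\end{enumerate}
In case (b), shrink $\vep$ further below the usual universal threshold, so that planes of such slopes touch $\Gamma$ only at points of $\{w<0\}\cap B_1$. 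That contact set is compact, hence at positive distance from $T_1$. The Caffarelli--Cabr\'e $C^{1,1}$ argument therefore applies in small balls inside each phase. The area formula then gives $c(\vep M)^n\le C(\|f^+_+\|^n_{L^n}+\|f^-_+\|^n_{L^n})$, which closes the proof. Your cone restriction is unnecessary.
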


\medskip

Finally, we state our comparison principle for constant-coefficient transmission problems:

\begin{thm}[Comparison principle -- c.f. Theorem~4.3 in~\cite{E}]
\label{thm:comp}
Let $\{F^{\pm},G\}$ be constant-coefficient operators with ellipticity constants $0 < \lambda \leq \Lambda$ and $\mu \geq 0$.
Let $\{f^{\pm}, \widetilde{f}^{\pm}\}$ be functions in $C(B^{\pm}_R \cup T_R)$, let $\{g, \widetilde{g}\}$ be functions in $C(T_R)$, and let $0 \leq \theta \leq 1$ be a constant.

If $u \in C(B_R)$ and $v \in C(B_R)$ are a viscosity subsolution and supersolution to 
\[
\begin{cases}
F^{\pm}(D^2 u) \geq f^{\pm}(x) & \text{ in } B_R^{\pm},\\
G(\nabla u^{+}, \theta \nabla u^{-}) \geq g(x) & \text{ on } T_R,
\end{cases}
\quad \text{ and } \quad
\begin{cases}
F^{\pm}(D^2 v) \leq \widetilde{f}^{\pm}(x) & \text{ in } B_R^{\pm},\\
G(\nabla v^{+}, \theta \nabla v^{-}) \leq \widetilde{g}(x) & \text{ on } T_R,
\end{cases}
\]
respectively, then
\[
u - v \in \underline{S}(\lambda,\Lambda,\mu,\theta; f^{+} - \widetilde{f}^{+}, f^{-} - \widetilde{f}^{-}, g - \widetilde{g}) \quad \text{ in } B_R.
\]
\end{thm}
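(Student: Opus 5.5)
The plan is to verify the subsolution inequalities for $w := u - v$ directly from Definition~\ref{def:visc}, treating bulk and interface contact points separately, and to reduce each case to a contact configuration where $u$ and $v$ can be tested by (almost) piecewise quadratic functions.

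\textbf{Bulk points.} Let $P$ be piecewise quadratic touching $w$ from above at $x_0 \in B_R^{\pm}$. Away from the interface, $u$ and $v$ are ordinary viscosity sub/supersolutions of $F^{\pm}(D^2\cdot) = f^{\pm}$, $\widetilde f^{\pm}$. The uniform ellipticity in operator-norm form gives $F^{\pm}(M+N) - F^{\pm}(M) \ge \lambda\|N\| \ge \frac{\lambda}{n}\tr{N}$ for $N \ge 0$, and $\le \Lambda \|N\| \le \Lambda \tr{N}$. Hence
\[
F^{\pm}(M) - F^{\pm}(M') \le \mcal^{+}(M - M'; \tfrac{\lambda}{n}, \Lambda),
\]
which accounts for the constant $\lambda/n$ in the solution class. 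I would then invoke the classical result for constant-coefficient equations (Caffarelli--Cabr\'e, via sup/inf-convolutions and Jensen's lemma) to get $\mcal^{+}(D^2 P^{\pm}; \frac{\lambda}{n}, \Lambda) \ge f^{\pm}(x_0) - \widetilde f^{\pm}(x_0)$. This is routine once the local problem is restricted to a small ball inside $B_R^{\pm}$.

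\textbf{Interface points.} This is the main case. Let $P$ touch $w$ from above at $x_0 \in T_R$, and argue by contradiction. Assume
\[
\tcal^{+}(\nabla P^{+}(x_0), \theta\nabla P^{-}(x_0)) < g(x_0) - \widetilde g(x_0) - 2\eta .
\]
By Remark~\ref{rem:trick}, I may take the touching to be strict and $\mcal^{+}(D^2P^{\pm}) < -\vep^{-1}$. Then $u - (v+P)$ has a strict local maximum at $x_0$.

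\emph{Regularization.} Since the operators have constant coefficients, the problem is invariant under horizontal translations, though not normal ones. So I would regularize only in the tangential variables:
\[
u^{\delta}(x) = \sup_{y'} \bigl\{u(x'+y', x_n) - |y'|^2/\delta\bigr\}, \qquad v_{\delta}(x) = \inf_{y'}\bigl\{ v(x'+y',x_n) + |y'|^2/\delta\bigr\}.
\]
These remain a sub/supersolution of the same transmission problem up to errors controlled by the moduli of continuity of $f^{\pm}, g, \widetilde f^{\pm}, \widetilde g$. They are also semiconvex (respectively semiconcave) in $x'$, and the strict maximum persists at nearby points $x_\delta$.

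\emph{Locating contact points.} Perturbing by small linear functions and applying a Jensen-type measure argument, I want to find contact points of the form:
\begin{itemize}
\item[(a)] points in the bulk where both $u^\delta$ and $v_\delta$ are punctually second-order differentiable; here $\mcal^{+}(D^2P^{\pm}) < -\vep^{-1}$ together with the bulk comparison gives a contradiction for $\vep$ small;
\item[(b)] points on $T_R$ where piecewise quadratics $Q_u = v_\delta + P + \ell + c$ (to leading order) touch $u^\delta$ from above, and correspondingly $v_\delta$ from below.
\end{itemize}
In case (b), subtracting the two transmission inequalities, and using the extremal bounds on $G$ in the form
\[
G(\xi^{+},\xi^{-}) - G(\zeta^{+},\zeta^{-}) \le \tcal^{+}(\xi^{+}-\zeta^{+}, \xi^{-}-\zeta^{-}),
\]
gives $\tcal^{+}(\nabla P^{+}, \theta\nabla P^{-}) \ge g - \widetilde g - o(1)$ at the contact point. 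Continuity of $\nabla P^{\pm}$ and of $g,\widetilde g$ then contradicts the assumed strict inequality.

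\textbf{Main obstacle.} The hard step is producing genuine piecewise quadratic test functions at interface contact points. The tangential regularization gives second-order control only in $x'$, while the normal direction has one-sided behavior that may jump across $T_R$. To handle this I would first try to exploit the strong bulk inequality $\mcal^{+}(D^2 P) < -\vep^{-1}$ near $x_0$. The idea is to show that the maximum of $u^\delta - v_\delta - P - \ell$ cannot occur in the bulk at a point of twice differentiability. It must therefore concentrate on $T_R$, where only first-order (gradient) information of each one-sided trace is needed. Those one-sided normal derivatives would be extracted by comparison with one-dimensional barriers in $x_n$ on each side.
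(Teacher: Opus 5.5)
Your bulk case is fine. Operator-norm ellipticity gives $F^{\pm}(M)-F^{\pm}(M')\le\mcal^{+}(M-M';\frac{\lambda}{n},\Lambda)$, and the Caffarelli--Cabr\'e theorem on differences of sub- and supersolutions, applied in small balls inside $B_R^{\pm}$, does the rest. Your localization is also correct: apply that result to the tangential regularizations and use Remark~\ref{rem:trick}; then $u^{\delta}-v_{\delta}-P-\ell$ is a strict subsolution of $\mcal^{+}$ in the bulk near $x_0$, so its local maxima there lie on $T_R$.

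\textbf{The gap is the interface case, which is the whole content of the statement.} The present paper only quotes the result from the author's earlier work, so your proof has to supply it. Knowing that the maximum sits at some $y\in T_R$ gives you no test function for $u^{\delta}$ or $v_{\delta}$ separately at $y$, and nothing you invoke provides one.

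\begin{itemize}
\item Tangential convolutions give no regularity in $x_n$, so Alexandrov's theorem and Jensen's lemma are unavailable.
\item No positive-measure contact argument can be the mechanism. The contact set it produces lies a.e.\ in the bulk, so you would always land in case (a) and never use the transmission inequalities. An argument using only bulk information would equally apply to $\Delta$, $G=\xi_n^{+}-\xi_n^{-}$, $\theta=1$, zero data, and the pair $u=-|x_n|$, $v=0$. For this pair the conclusion fails, since $\partial_n(u-v)^{+}-\partial_n(u-v)^{-}=-2$. What breaks semiconvexity is that $u-v-P$ can have a concave kink along $T$, and that kink is exactly where the interface information lives.
\item In (b), $v_{\delta}+P+\ell+c$ is not piecewise quadratic, because $v_{\delta}$ is merely continuous in $x_n$. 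So Definition~\ref{def:visc} cannot be applied to it. ``To leading order'' and ``one-dimensional barriers in $x_n$'' stand in for precisely the missing step.
\end{itemize}

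\textbf{Even granting one-sided differentiability at $y$, the subtraction in (b) does not give what you claim.} At a maximum $y\in T_R$ of $W=u^{\delta}-v_{\delta}-P-\ell$, you only know that $\eta^{\pm}:=\nabla W^{\pm}(y)$ has zero tangential part and $\eta_n^{+}\le 0\le\eta_n^{-}$. In (b) you test $u^{\delta}$ through the upper bound $v_{\delta}+P+\ell$ and $v_{\delta}$ through the lower bound $u^{\delta}-P-\ell$. The slopes fed into $G$ then differ by $\nabla P^{\pm}+\nabla\ell-\eta^{\pm}$, so you obtain only
$\tcal^{+}(\nabla P^{+}-\eta^{+},\theta(\nabla P^{-}-\eta^{-}))\ge g-\widetilde g-o(1)$.
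Since $\tcal^{+}$ is increasing in $\xi_n^{+}$ and decreasing in $\xi_n^{-}$, this is weaker than what you need.

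The signs become favorable only if each function is tested with (nearly) its own one-sided gradients. Then
$g-\widetilde g-o(1)\le\tcal^{+}(\nabla P^{+}+\eta^{+},\theta(\nabla P^{-}+\eta^{-}))\le\tcal^{+}(\nabla P^{+},\theta\nabla P^{-})$.
Alternatively, you could first tighten $P$ so that the normal defect $\eta_n^{\pm}$ vanishes.

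So the missing idea is a device that, at interface contact points, produces a piecewise quadratic touching $u$ from above and one touching $v$ from below. Their one-sided gradients must essentially be those of $u$ and $v$ themselves, and the construction must be compatible with the transmission inequalities. That amounts to one-sided first-order control of each function up to $T$ from each phase. Tangential regularization cannot provide this, and your sketch does not say how the normal direction would be controlled.
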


\medskip

\section{Harnack inequality and H\"{o}lder estimate}
\label{sec:holder}

Our main result in this section is a boundary (or rather an \emph{interface}) Harnack inequality for functions in the solution class.
To prove it, we modify the classical quadratic barrier for the oblique derivative problem~\cite{LT} by making it piecewise quadratic, and then combine it with the ABP estimate (Theorem~\ref{thm:abp}).

\medskip

To state the inequality accurately, it is convenient to introduce a special notation for thin cylinders.
For $\rho^{\pm} > 0$ and $R > 0$, we define the two-sided cylinder
\[
\gcal(\rho^{-}, \rho^{+}, R) = B_R' \times (- R \rho^{-}, R \rho^{+}) \subset \R^{n-1} \times \R
\]
and the one-sided cylinder away from the interface
\[
\widetilde{\gcal}^{+}(\rho^{+}, R) 
= B_R' \times \textstyle (\frac{1}{2}R \rho^{+}, R \rho^{+}).
\]
We also write $Q_R$ for the standard ``thick'' two-sided cylinder
\[
Q_R = \gcal(1,1, R) = B_R' \times(-R,R).
\]

\medskip

\begin{lem}[Harnack inequality]
\label{lem:harnack}
Let $u \in S^{\star}(\lambda, \Lambda,\mu,\theta; f^{+}, f^{-},g)$ in $Q_3$.

There are universal constants 
$0 < \rho^{-} < 1$, $\rho^{+} = \frac{\lambda}{4\Lambda} \rho^{-}$, 
and $C > 0$ such that if
\[
u \geq 0 \quad \text{ in } \gcal(\rho^{-}, \rho^{+}, 3),
\]
then
\[
\sup_{\widetilde{\gcal}^{+}(\rho^{+}, 2)} u \leq C \left( \inf_{\gcal(\rho^{-}, \rho^{+}, 1)} u
+\|f^{+}\|_{L^{n}(Q_3^{+})} + \|f^{-}\|_{L^{n}(Q_3^{-})}+ \|g\|_{L^{\infty}(T_3)}
\right).
\]
\end{lem}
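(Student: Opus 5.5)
The plan is to follow the classical boundary Harnack argument for oblique derivative problems (as in Lieberman--Trudinger), adapted to the two-sided setting. A piecewise quadratic barrier transports positivity from the region $\widetilde{\gcal}^{+}(\rho^{+},2)$, which lies away from the interface, down to the thin two-sided cylinder $\gcal(\rho^{-},\rho^{+},1)$ around $T$. The ABP estimate (Theorem~\ref{thm:abp}) then absorbs the error terms. By homogeneity of the classes we first normalize
\[
\|f^{+}\|_{L^{n}(Q_3^{+})} + \|f^{-}\|_{L^{n}(Q_3^{-})}+ \|g\|_{L^{\infty}(T_3)} \leq \delta
\]
for a small universal $\delta$, and $\sup_{\widetilde{\gcal}^{+}(\rho^{+},2)} u = 1$. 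The target is then $\inf_{\gcal(\rho^{-},\rho^{+},1)} u \geq c$ for a universal $c>0$.

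\emph{Step 1 (interior Harnack).} In the upper phase the rescaled cylinder $B_2'\times(\rho^{+},2\rho^{+})$ is away from $T$, and there $u\in S^\star$ for the Pucci operators. So the interior Harnack inequality of Caffarelli, applied through a covering by balls of radius comparable to $\rho^{+}$, gives $u \geq c_1$ on a slightly smaller slab $B_{5/2}'\times(\frac34\rho^{+},\frac32\rho^{+})$. The constant $c_1$ depends only on $\rho^{+}$, which will be universal. Since $u\geq0$ in $\gcal(\rho^-,\rho^+,3)$, the supremum over the thinner slab is dominated by the infimum over a fattened one.

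\emph{Step 2 (barrier).} We construct a piecewise quadratic function on $D=\gcal(\rho^{-},\rho^{+},3)\setminus\{x_n\geq \frac34\cdot 3\rho^+\}$, truncated appropriately. The model is
\[
\varphi^{\pm}(x) = 1 - A|x'|^2 + \beta^{\pm} x_n + K^{\pm}x_n^2 ,
\]
with $\beta^{+}>0$ and $\beta^{-}>0$ (increasing in $x_n$ on both sides). The ratio $\beta^-/\beta^+$ is chosen, in terms of $\lambda/\Lambda$, so that $\tcal^{-}(\nabla\varphi^+,\theta\nabla\varphi^-) \geq \lambda\beta^+ - \Lambda\theta\beta^- - 2\mu\cdot 2A|x'|$ is positive. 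This is where the relation $\rho^{+}=\frac{\lambda}{4\Lambda}\rho^-$ enters: on the lower side the linear term must produce a drop of order one over height $\rho^-$, while on the upper side the slope is smaller by a factor $\lambda/(4\Lambda)$ over height $\rho^{+}$. The tangential penalty $-A|x'|^2$ makes $\varphi\le0$ on the lateral boundary $|x'|=3$ and contributes a tangential gradient of size $\lesssim A$. That gradient is handled by taking the normal slopes large relative to $\mu A$, i.e.\ by taking $\rho^-$ small. The concave terms $K^{\pm}x_n^2$, with $K^{\pm}$ large and negative, ensure that $\mcal^{+}(D^2\varphi^\pm)<0$ strictly in both phases (the term $-2A(n-1)\Lambda$ already helps). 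They also force $\varphi\le 0$ on the bottom $x_n=-3\rho^-$. Finally, after scaling by $c_1$, we arrange $\varphi\le c_1$ on the top face where Step 1 applies, and $\varphi\geq c_2>0$ on $\gcal(\rho^-,\rho^+,1)$.

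\emph{Step 3 (comparison via ABP).} By a direct viscosity check using the definition with piecewise quadratic test functions, $w = u - c_1\varphi$ belongs to $\overline{S}$ with right-hand sides $|f^\pm|$ and $|g|$ on $D$, since $\varphi$ is a strict subsolution. Moreover $w\geq0$ on $\partial D$: the top face by Step 1, and the sides and bottom because $u\geq0$ and $\varphi\le0$ there. Theorem~\ref{thm:abp}, applied on a ball containing $D$ (or its cylinder version), gives $w\geq -C\delta$. Hence $u\geq c_1c_2 - C\delta$ on $\gcal(\rho^-,\rho^+,1)$, which is at least $c$ for $\delta$ small.

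\emph{Main obstacle.} The crux is Step 2: making the barrier simultaneously satisfy the transmission inequality uniformly in $\theta\in[0,1]$ despite the tangential gradient $\mu\cdot 2A|x'|$, keep both bulk Pucci inequalities, and have the correct boundary signs on the asymmetric cylinder. I would first try taking $\beta^+ \sim 1/\rho^+$, $\beta^- \sim 1/\rho^-$, and $K^\pm \sim -1/(\rho^\pm)^2$, and then choose $\rho^-$ small depending on $\mu$ and $A$. The real check is the bookkeeping of these constants.
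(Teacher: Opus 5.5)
Your proposal has a genuine gap: Step~2 imposes the bulk inequality in the wrong direction, and Step~3 fails as a result. For $w=u-c_1\varphi$ to belong to $\overline{S}(\lambda,\Lambda,\mu,\theta;|f^{+}|,|f^{-}|,|g|)$, you need $\varphi$ to be a \emph{subsolution} in each phase, $\mcal^{-}(D^2\varphi^{\pm};\frac{\lambda}{n},\Lambda)\geq 0$. This is because
\[
\textstyle\mcal^{-}(D^2u-c_1D^2\varphi)\leq \mcal^{-}(D^2u)-c_1\,\mcal^{-}(D^2\varphi).
\]
Your $\varphi$ is concave in each phase ($-A|x'|^2$ together with $K^{\pm}<0$), so $\mcal^{-}(D^2\varphi)=\Lambda\,\tr{D^2\varphi}<0$. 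In other words, it is a strict \emph{supersolution} in the bulk, which is exactly what your condition $\mcal^{+}(D^2\varphi)<0$ says. Then $w$ only satisfies $\mcal^{-}(D^2w)\leq|f^{\pm}|+c_1\Lambda|\tr{D^2\varphi}|$. The extra term is of order $c_1/(\rho^{\pm})^2$, it swamps $c_1c_2$ in the ABP bound, and the comparison gives nothing. Your interface requirement $\tcal^{-}(\nabla\varphi^+,\theta\nabla\varphi^-)>0$ is in the correct (subsolution) direction, so the barrier as designed is of mixed type.

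Fixing the sign is not cosmetic; it changes the design. The tangential term $-A|x'|^2$, which you need for the lateral sign, contributes eigenvalues $-2A$. So the normal coefficients must be positive and large, $\frac{\lambda}{n}K^{\pm}>(n-1)\Lambda A$, and $\varphi$ must be convex in $x_n$. That convexity now works against you exactly where you relied on concavity: at the bottom face $x_n=-3\rho^-$, and for positivity on $\gcal(\rho^-,\rho^+,1)$.

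The missing idea is to scale $K^-=\kappa/(\rho^-)^2$ and $\beta^-=b/\rho^-$ with $\kappa$ small compared to $b$. On the scale $\rho^-$ both terms are $O(1)$, so you can have $3b-9\kappa\geq 1$ (sign at the bottom) and $1-A-b+\kappa>0$ (positivity on $B_1'\times(-\rho^-,0]$) simultaneously. Meanwhile $\kappa/(\rho^-)^2\gg A$ once $\rho^-$ is small, which gives the subsolution property. The paper does this with
\[
\textstyle\varphi=\frac12-\frac14|x'|^2+\frac14\big[\frac{x_n}{\rho^-}+(\frac{x_n}{3\rho^-})^2\big] \text{ for } x_n\le0, \qquad \frac12-\frac14|x'|^2+\frac14\big[\frac{x_n}{2\rho^+}+(\frac{x_n}{2\rho^+})^2\big] \text{ for } x_n\geq0.
\]
Its lower bound $\frac1{36}$ on $\gcal(\rho^-,\rho^+,1)$ shows how tight the balance is.

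The rest of your outline matches the paper: the normalization, the interior Harnack chain, the slope ratio coming from $\rho^+=\frac{\lambda}{4\Lambda}\rho^-$, and absorbing $\mu|\nabla'\varphi|$ by taking $\rho^-$ small. You should also fix the geometry. The top face $x_n=\frac94\rho^+$ of your $D$ lies outside the slab where Step~1 gives $u\geq c_1$. The lateral face $|x'|=3$ lies on $\partial Q_3$, where $u$ is neither known to be defined nor known to be nonnegative. The paper instead works in $B_2'\times(-3\rho^-,2\rho^+)$, whose top face is controlled by the interior Harnack inequality.
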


\begin{proof}
Once we have chosen $\rho^{\pm}$, the classical Harnack inequality in $\gcal^{+}(\rho^{-},\rho^{+},3) = B_3' \times (0, 3\rho^{+})$ (Theorem~4.3 in~\cite{CC}) and a chaining argument yield
\begin{equation}
\label{into}
\sup_{\widetilde{\gcal}^{+}(\rho^{+},2)} u
\leq C \left( \inf_{\widetilde{\gcal}^{+}(\rho^{+},2)} u + \|f^{+}\|_{L^{n}(Q^{+}_3)} \right)
\end{equation}
for some universal constant $C > 0$. 
Hence it suffices to control the infimum in $\widetilde{\gcal}^{+}(\rho^{+},2)$.

\medskip

Let $m = \inf_{\widetilde{\gcal}^{+}(\rho^{+}, 2)} u$ and
consider the piecewise quadratic function
\[
\varphi = \frac{1}{2} 
- \frac{1}{4} |x'|^2
+\frac{1}{4} \left[
\frac{x_n}{2\rho^{+}} 
+ \left( \frac{x_n}{2\rho^{+}}\right)^2
\right] \chara{\{x_n \geq 0\}}
+ \frac{1}{4} \left[
\frac{x_n}{ \rho^{-}}
+  \left( \frac{x_n}{3\rho^{-}}\right)^2
\right] \chara{\{x_n \leq 0\}}.
\]
We will show that the barrier $m\varphi$ controls $u$ from below in the region
\[
\gcal_2 := 
\textstyle\gcal(\frac{3}{2}\rho^{-}, \rho^{+}, 2) 
= B_2' \times (- 3 \rho^{-}, 2 \rho^{+}),
\]
for an appropriate choice of $\rho^{-}$ (recall that $\rho^{+} = \frac{\lambda}{4 \Lambda} \rho^{-}$).

\medskip

First, we claim that at the boundary of $\gcal_2$, we have the upper bounds
\begin{equation}
\label{bedo1}
\varphi \leq 1 \quad \text{ on } \partial \gcal_2 \cap \{x_n = 2\rho^{+}\}
\end{equation}
and
\begin{equation}
\label{bedo2}
\varphi \leq 0 \quad \text{ on } \partial \gcal_2 \setminus \{x_n = 2\rho^{+}\}.
\end{equation}
Indeed, direct computations show that:
\begin{itemize}
\item At the top lid $\partial \gcal_2 \cap \{x_n = 2\rho^{+}\}$
\[
\varphi \leq \frac{1}{2} + \frac{1}{4} (1 + 1) = 1;
\]
\item At the upper lateral boundary 
$\{|x'| = 2\} \cap \{0 \leq x_n \leq 2 \rho^{+}\}$
\[
\varphi \leq \frac{1}{2} + \frac{1}{4}(1 + 1- 2^2) = 0;
\]
\item At the lower lateral boundary 
$\{|x'| = 2\} \cap \{-3\rho^{-} \leq x_n \leq 0 \}$
\[
\varphi \leq \frac{1}{2} + \frac{1}{4}(-2^2) = - \frac{1}{2} <0;
\]
\item At the bottom lid $\partial \gcal_2 \cap \{x_n = - 3\rho^{-}\}$ 
\[
\varphi \leq \frac{1}{2} + \frac{1}{4} (-3+1) = 0.
\]
\end{itemize}
Moreover, 
the Hessian for $x_n < 0$ is
\[
D^2 \varphi = 
\frac{1}{2}
\left[
\frac{1}{( 3\rho^{-})^2} e_n \otimes e_n - \id'
\right]
\]
(here, with a slight abuse of notation $I' = \id - e_n \otimes e_n$)
and hence
\[
\textstyle
\mcal^{-}(D^2 \varphi; \frac{\lambda}{n}, \Lambda) = 
\displaystyle
\frac{1}{2} \left(\frac{\lambda}{n(3 \rho^{-})^2} - \Lambda(n-1) \right) 
\quad \text{ for } x_n < 0,
\]
which is positive for
\begin{equation}
\label{into2}
\rho^{-} < \frac{1}{3} \sqrt{\frac{\lambda}{n(n-1)\Lambda}}.
\end{equation}
Recalling that $\rho^{+} = \frac{\lambda}{4 \Lambda} \rho^{-} < \rho^{-}$, 
under assumption~\eqref{into2},
it is immediate to check that we also have
\begin{equation}
\label{bedo3}
\textstyle
\mcal^{-}(D^2 \varphi; \frac{\lambda}{n}, \Lambda) > 0 \quad \text{ for } x_n > 0 \text{ and } x_n < 0.
\end{equation}

\medskip

Since $\varphi_{x_n}^{+} = \frac{1}{8 \rho^{+}} >0$ and $\varphi_{x_n}^{-} = \frac{1}{4 \rho^{-}} >0$,  by~\eqref{tminus}, the transmission condition on $T_{2}$ is
\[
\begin{split}
\tcal^{-}(\nabla \varphi^{+},\theta \nabla \varphi^{-}; \lambda, \Lambda, \mu) 
& = \lambda \varphi_{x_n}^{+} - \Lambda \theta \varphi_{x_n}^{-} - (1+ \theta) \mu |\nabla' \varphi|\\
&= \frac{1}{4} \left[
\frac{\lambda}{2\rho^{+}}
- \frac{\Lambda}{\rho^{-}} \theta - 4(1+ \theta)\mu |x'|
\right]\\
& \qquad \geq \frac{1}{4} \left[
\frac{\lambda}{2\rho^{+}}
- \frac{\Lambda}{\rho^{-}} - 8 \mu
\right]\\
& \qquad \quad \geq \frac{\Lambda}{4 \rho^{-}}- 2 \mu,
\end{split}
\]
therefore, assuming that
\begin{equation}
\label{into3}
\rho^{-} 
< \frac{\Lambda}{8 \mu},
\end{equation}
the extremal operator becomes positive, i.e.,
\begin{equation}
\label{bedo4}
\tcal^{-}(\nabla \varphi^{+},\theta \nabla \varphi^{-};\lambda, \Lambda, \mu)  > 0 \quad \text{ on } \{x_n = 0\}.
\end{equation}

\medskip

Combining \eqref{into2} and \eqref{into3}, we finally take the universal constant
\[
\rho^{-} := \frac{1}{2} \min\left\{
\frac{1}{3} \sqrt{\frac{\lambda}{n(n-1)\Lambda}},
\frac{\Lambda}{8 \mu}
\right\}.
\]
With this choice, from~\eqref{bedo1}, \eqref{bedo2}, \eqref{bedo3}, and~\eqref{bedo4} it follows that
\[
\begin{cases}
u - m \varphi \in \overline{S}(\lambda,\Lambda,\mu,\theta; |f^{+}|, |f^{-}|, |g|) & \text{ in } \gcal_2,\\
u - m \varphi \geq 0 & \text{ on } \partial \gcal_2,
\end{cases}
\]
whence by the ABP maximum principle, we deduce
\begin{equation}
\label{bedo5}
m \varphi \leq u + 
C\left( 
\|f^{+}\|_{L^n(Q^{+}_3)} + \|f^{-}\|_{L^n(Q^{-}_3)}
+ \|g\|_{L^{\infty}(T_3)} \right) 
\quad \text{ in } \gcal_2.
\end{equation}

\medskip

Moreover, the barrier $\varphi$ is positive in $\gcal(\rho^{-}, \rho^{+}, 1)$, with a uniform bound.
Namely:
\begin{itemize}
\item 
In $\gcal(\rho^{-}, \rho^{+}, 1)^{+}= B_1' \times (0, \rho^{+})$
\[
\varphi \geq \frac{1}{2} - \frac{1}{4} = \frac{1}{4};
\]
\item 
In $\gcal(\rho^{-}, \rho^{+}, 1)^{-}= B_1' \times (-\rho^{-}, 0)$
\[
\varphi \geq 
\frac{1}{2} - \frac{1}{4} + \frac{1}{4} \left[ - 1 + \frac{1}{3^2}\right]
= \frac{1}{36}.
\]
\end{itemize}
Therefore, taking the infimum in $\gcal(\rho^{-},\rho^{+},1)$ in~\eqref{bedo5}, we conclude
\begin{equation}
\label{kryl}
\inf_{\widetilde{\gcal}^{+}(\rho^{+},2)} u = m \leq 36 \inf_{\gcal(\rho^{-},\rho^{+},1)} u + C\left( 
\|f^{+}\|_{L^n(Q^{+}_3)} + \|f^{-}\|_{L^n(Q^{-}_3)}
+ \|g\|_{L^{\infty}(T_3)} \right),
\end{equation}
which combined with~\eqref{into} yields the claim.
\end{proof}
\begin{rem}
It is clear from the proof above that the weaker estimate~\eqref{kryl} holds assuming only that $u \geq 0$ and $u \in \overline{S}(\lambda,\Lambda,\mu,\theta; |f^{+}|, |f^{-}|, |g|)$ in $Q_3$.
\end{rem}

\medskip

As a direct consequence of the boundary Harnack inequality, we deduce H\"{o}lder continuity:

\begin{thm}[$C^{\alpha}$ estimate for functions in $S^{\star}$]
\label{thm:holder}
Let $u \in S^{\star}(\lambda, \Lambda,\mu,\theta; f^{+}, f^{-}, g)$ in $Q_1$.

There are universal constants $0 < \alpha < 1$ and $C >0$ such that
\[
[u]_{C^{\alpha}(\overline{Q}_{1/2})}
\leq C \left( \|u\|_{L^{\infty}(Q_1)} + 
\|f^{+}\|_{L^n(Q^{+}_1)} + \|f^{-}\|_{L^n(Q^{-}_1)}
+ \|g\|_{L^{\infty}(T_1)}\right).
\]
\end{thm}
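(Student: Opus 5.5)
The estimate will follow from an oscillation-decay lemma obtained from Lemma~\ref{lem:harnack}, combined with the interior Krylov--Safonov estimate away from the interface. Both ingredients are invariant under the scaling $u_r(x) = u(rx)$. If $u \in S^{\star}(\lambda,\Lambda,\mu,\theta; f^{+},f^{-},g)$ in $Q_r$, then $u_r \in S^{\star}(\lambda,\Lambda,\mu,\theta; r^2 f^{\pm}(r\cdot), r\, g(r\cdot))$ in $Q_1$. In particular $\|r^2 f^{\pm}(r\cdot)\|_{L^n(Q_1^{\pm})} = r\|f^{\pm}\|_{L^n(Q_r^{\pm})}$ and $\|r\,g(r\cdot)\|_{L^\infty} = r\|g\|_{L^\infty}$. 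The rescaled source terms therefore gain a factor $r$, and this supplies the Hölder exponent. The extremal operators are positively homogeneous, and $\tcal^{\pm}$ keeps its structure with $\theta$ unchanged, so the universal constants do not depend on $\theta$.

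\textbf{Step 1 (oscillation decay at the interface).} Let $x_0 \in T_{1/2}$, and set $M_r = \sup_{Q_r(x_0)} u$ and $m_r = \inf_{Q_r(x_0)} u$. I would apply Lemma~\ref{lem:harnack}, after rescaling from $Q_3$ to $Q_r(x_0)$, to the two nonnegative functions $u - m_r$ and $M_r - u$. Both lie in $S^{\star}$ with the same $|f^{\pm}|$ and $|g|$, because constants change neither $D^2u$ nor $\nabla u$. Call the top slab $\widetilde{\gcal}^{+}(\rho^{+}, 2r/3)$ translated to $x_0$, and note that it is contained in the region where the Harnack inequality gives control. On that slab, either $u - m_r \geq \frac{1}{2}(M_r - m_r)$ at a point or $M_r - u \geq \frac12(M_r-m_r)$ at a point. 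Suppose the first holds. Since $\sup_{\widetilde{\gcal}^+}(u-m_r) \geq \frac12 \osc$, the Harnack inequality bounds $\inf$ of $u - m_r$ over the small cylinder $\gcal(\rho^{-},\rho^{+}, r/3)$ from below by $c\,\osc_{Q_r} u - C r K$. Here $K$ is the data norm $\|f^+\|_{L^n}+\|f^-\|_{L^n}+\|g\|_{L^\infty}$. The conclusion is
\[
\osc_{\gcal(\rho^{-},\rho^{+},r/3)(x_0)} u \leq (1-c)\osc_{Q_r(x_0)} u + C r K .
\]
The Harnack hypothesis needs only $u \geq 0$ on the thin cylinder, and that holds on all of $Q_r$. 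Since $\rho^{\pm} < 1$, the thin cylinder $\gcal(\rho^{-},\rho^{+},r/3)$ contains the thick cylinder $Q_{\rho^{+} r/3}(x_0)$. Hence $\osc_{Q_{\delta r}(x_0)} u \leq (1-c)\osc_{Q_r(x_0)} u + CrK$ with the universal constant $\delta = \rho^{+}/3$. Iterating this in the standard way gives
\[
\osc_{Q_r(x_0)} u \leq C r^{\alpha}\bigl(\|u\|_{L^\infty(Q_1)} + K\bigr)
\]
for $r \leq 1/4$, with $\alpha$ chosen so that $\delta^{\alpha} > 1-c$ and $\alpha < 1$.

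\textbf{Step 2 (interior points and gluing).} For $x, y \in \overline{Q}_{1/2}$, set $d = \operatorname{dist}(x, T)$ and assume $|x-y| \leq d/2$. On $B_{d}(x)$, which lies in a single phase, $u$ belongs to the classical class $S^{\star}$ of Caffarelli--Cabré. The interior Hölder estimate from \cite{CC}, rescaled to $B_d(x)$ and using $\osc_{B_d(x)} u \leq \osc_{Q_{2d}(\bar x)} u \leq Cd^{\alpha}(\ldots)$, where $\bar x$ is the projection of $x$ onto $T$, gives $|u(x)-u(y)| \leq C|x-y|^{\alpha}(\ldots)$. To make this work I would reduce the exponent to the minimum of the interior and interface exponents. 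If instead $|x-y| > d/2$, both points lie in $Q_{3|x-y|}(\bar x)$, and Step 1 applies directly. Points far from $T$, with $d \geq 1/8$, are handled by the interior estimate alone.

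The main obstacle is the geometry of Step 1. Lemma~\ref{lem:harnack} controls the supremum only over the upper slab, which is away from the interface, and it uses asymmetric thin cylinders. One has to check that the dichotomy ``$u$ is large or small on a set of positive proportion'' can be run on that specific slab, and that the nested cylinders are compatible, meaning the thin cylinder of radius $3r'$ sits inside $Q_r$. Both checks come down to choosing the scale $r' = r/3$ and using $\rho^{\pm} < 1$, and I expect this to work.
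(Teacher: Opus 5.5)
Your proposal is correct and follows essentially the paper's argument: apply Lemma~\ref{lem:harnack}, rescaled, to $u-\inf u$ and $\sup u-u$, derive oscillation decay at the interface, iterate, and combine with the Caffarelli--Cabr\'e interior estimate. The differences are minor. You use a dichotomy and thick cylinders $Q_{\rho^{+}r/3}(x_0)\subset\gcal(\rho^{-},\rho^{+},r/3)$, whereas the paper adds the two Harnack estimates and iterates directly on thin cylinders with ratio $3$. Your Step~2 also writes out the gluing with interior estimates, which the paper compresses into a covering argument.
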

\begin{proof}
By classical interior regularity (Proposition~4.10 in~\cite{CC}) and a covering argument, it suffices to prove the H\"{o}lder estimate only at the interface $T_{1/2}$. 
Namely, we need to show that
\begin{equation}
\label{eq:holder}
|u(x) - u(x_0)| \leq C \left( \|u\|_{L^{\infty}(Q_1)} 
+\|f^{+}\|_{L^n(Q^{+}_1)} + \|f^{-}\|_{L^n(Q^{-}_1)}
+ \|g\|_{L^{\infty}(T_1)}\right) |x-x_0|^{\alpha}
\end{equation}
for all $x_0 \in T_{1/2}$ and all $x \in Q_1$ sufficiently close to $x_0$.

\medskip

For $0 < R \leq 1$, with the notation above, define
\[
m_R := \inf_{\gcal(\rho^{-}, \rho^{+}, R)} u, 
\qquad M_R := \sup_{ \gcal(\rho^{-}, \rho^{+}, R)} u, 
\qquad o_R := \underset{\gcal(\rho^{-}, \rho^{+}, R)}{\osc} u = M_R - m_R.
\]

\medskip

Let $0 < R \leq 1/6$.
Since $u - m_{3R}$ and $M_{3R} - u$ are nonnegative in $\gcal(\rho^{-}, \rho^{+}, 3R) \subset Q_{1/2}$, applying Lemma~\ref{lem:harnack} to each function (rescaled to $Q_{3R}$) and adding both estimates, we deduce
\[
\begin{split}
M_{3R} -m_{3R} 
& \leq \sup_{\widetilde{\gcal}^{+}(\rho^{+}, 2R)} ( M_{3R} - u) + \sup_{\widetilde{\gcal}^{+}(\rho^{+}, 2R)} (u - m_{3R})\\
& \quad \leq C \left( 
M_{3R} - M_{R} +
m_{R} - m_{3R} 
+ R \|f^{+}\|_{L^n(Q_{3R}^{+})} + R\|f^{-}\|_{L^n(Q_{3R}^{-})}
+ R \|g\|_{L^{\infty}(T_{3R})}\right).
\end{split}
\]
From here, reordering terms, we obtain
\[
o_R \leq \frac{C-1}{C} o_{3R} + R \big( 
\|f^{+}\|_{L^n(Q_{3R}^{+})} + \|f^{-}\|_{L^n(Q_{3R}^{-})}
+ \|g\|_{L^{\infty}(T_{3R})}\big)
\]
and a standard iteration of this inequality leads to an algebraic oscillation decay
\begin{equation}
\label{decco}
o_{r} \leq C r^{\alpha}
\left(o_{1/2}+ \|f^{+}\|_{L^n(Q_{1/2}^{+})} + \|f^{-}\|_{L^n(Q_{1/2}^{-})}+ \|g\|_{L^{\infty}(T_{1/2})}\right) 
\quad \text{ for } r \leq 1/2,
\end{equation}
with $\alpha = \log_{3} \frac{C}{C-1} >0$.
For $x_0 \in T_{1/2}$ and $r \leq 1/2$, we have that
\[
B_{\rho^{+}r}(x_0) \subset x_0 + \gcal(\rho^{-}, \rho^{+}, r) \subset Q_1
\]
and hence a translation of~\eqref{decco} yields
\[
\underset{B_{\rho^{+} r}(x_0)}{\osc} u \leq C r^{\alpha} \left(\|u\|_{L^{\infty}(Q_1)}+ \|f^{+}\|_{L^n(Q_1^{+})} + \|f^{-}\|_{L^n(Q_1^{-})}+ \|g\|_{L^{\infty}(T_{1})} \right),
\]
whence it follows that~\eqref{eq:holder} holds for all $x \in B_{\rho^{+}/2}(x_0)$.
\end{proof}

\medskip

The $C^{\alpha}$ estimate for nonlinear transmission problems, Theorem~\ref{thm:holder:intro} in the Introduction, follows immediately from Theorem~\ref{thm:holder}:
\begin{proof}[Proof of Theorem~\ref{thm:holder:intro}]
Since $u$ belongs to $S(\lambda, \Lambda, \mu, \theta; f^{+} - F^{+}(0,\cdot), f^{-} - F^{-}(0,\cdot), g - G(0,0,\cdot))$ in $B_1$ (see Proposition~2.16 in~\cite{E}), by Theorem~\ref{thm:holder} (rescaled and translated) it satisfies a $C^{\alpha}$ estimate in interior cubes.
The claim follows by a standard covering argument.
\end{proof}

\medskip

We note that combining the classical interior estimates with Theorem~\ref{thm:holder} gives a $C^{\alpha}$ estimate on arbitrary balls, not necessarily centered on the interface:

\begin{cor}
\label{cor:trueint}
Let $u \in S^{\star}(\lambda, \Lambda,\mu,\theta; f^{+}, f^{-}, g)$ in $B_1$.
 
There are universal constants $0 < \alpha < 1$ and $C >0$ such that
\[
R^{\alpha} [u]_{C^{\alpha}(\overline{B}_{R/2}(x))}
\leq 
C \left( \|u\|_{L^{\infty}(B_R(x))} 
+
R \Big(
 \|f^{+}\|_{L^n(B^{+}_R(x))} + \|f^{-}\|_{L^n(B^{-}_R(x))}
+ \|g\|_{L^{\infty}(T \cap B_R(x))}
\Big)
\right)
\]
for every ball $B_R(x) \subset B_1$.
\end{cor}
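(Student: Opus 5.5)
We want to prove Corollary~\ref{cor:trueint}: for any ball $B_R(x)\subset B_1$, $R^\alpha[u]_{C^\alpha(\overline B_{R/2}(x))}$ is controlled. The plan is to reduce to $R=1$ by scaling, and then split into two cases according to where the ball sits relative to the interface. The interface case is handled by Theorem~\ref{thm:holder}, the far-from-interface case by the classical interior estimate (Proposition~4.10 in~\cite{CC}), and the two are glued together.

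\emph{Scaling.} Set $v(y)=u(x+Ry)$ for $y\in B_1$. For interior points this is a standard rescaling. At the interface it is legitimate when $x_n=0$; when $x_n\neq 0$, translating in $x_n$ moves the interface to $\{y_n=-x_n/R\}$. Since all hypotheses are invariant under horizontal translations, I will handle the vertical offset through the case analysis instead.

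Under this scaling $v$ lies in $S^\star$ with data $R^2f^\pm(x+R\cdot)$ and $Rg(x+R\cdot)$. The bulk terms satisfy $\|R^2 f^\pm(x+R\cdot)\|_{L^n(B_1)} = R\|f^\pm\|_{L^n(B_R(x))}$, and the interface term becomes $R\|g\|_{L^\infty}$. This produces exactly the factor $R$ in the statement, and $[v]_{C^\alpha}=R^\alpha[u]_{C^\alpha}$. So it suffices to prove the estimate with $R=1$, for a ball $B_1(z)$ whose center lies at some height $z_n=t$ relative to the interface.

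\emph{Case analysis, with $d=|t|$.}

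\textbf{Case $d\ge 1/8$.} Fix $y\in \overline B_{1/2}(z)$ and write $\delta(y)$ for its distance to $T$. If $\delta(y)$ is bounded below, the interior estimate applies directly. For points near the interface, let $y_0$ be the projection of $y$ onto $T$. The ball $B_{1/2}(y_0)$ may not lie inside $B_1(z)$, so I instead use a small cylinder $y_0+Q_{c}$ with universal $c$. Such a cylinder fits inside $B_1(z)$ whenever $y\in B_{3/4}(z)$, because $y_0$ is then at distance at most $3/4$ from $z$ (here $d\le 3/4$ is used; if $d>3/4$ no point of $\overline B_{1/2}(z)$ is within $1/4$ of $T$). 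Rescaling Theorem~\ref{thm:holder} to $y_0+Q_c$ gives a $C^\alpha$ bound on $y_0+Q_{c/2}$.

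\textbf{Case $d<1/8$.} The ball $B_{7/8}$ centered at the projection $z'$ of $z$ onto $T$ contains no cubes larger than what is needed; I apply the rescaled Theorem~\ref{thm:holder} on cylinders centered at interface points, combined with interior estimates.

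In either case the result is a local H\"older bound, at scale $c$, around every point of $\overline B_{1/2}(z)$.

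\emph{Gluing.} A local bound of the form $|u(y)-u(y')|\le C K|y-y'|^\alpha$ for $|y-y'|<c$ upgrades to a global seminorm bound on $\overline B_{1/2}(z)$: when $|y-y'|\ge c$ we simply use $|u(y)-u(y')|\le 2\|u\|_\infty \le 2c^{-\alpha}\|u\|_\infty|y-y'|^\alpha$. Note that near-interface points $y$ lying far from $z$ in the tangential direction still have their projections $y_0$ within $B_1(z)$ at distance at least $1/4$ from $\partial B_1(z)$, so the cylinders fit.

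\emph{Exponent.} The H\"older exponent is taken to be the minimum of the interior exponent and the exponent from Theorem~\ref{thm:holder}.

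\emph{Main obstacle.} The only delicate step is the geometric bookkeeping: verifying that the cylinders $y_0+Q_c$ centered on $T$ remain inside $B_1(z)$ uniformly in the position of $z$, and that every point near $T$ is either covered by such a cylinder or lies at distance at least $c$ from $T$. I would handle this by choosing $c=1/16$ and checking the two regimes $\delta(y)<c/2$ and $\delta(y)\ge c/2$ separately.
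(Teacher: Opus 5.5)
Your proposal is correct and follows essentially the paper's argument: rescale, cover $\overline{B}_{R/2}(x)$ by small balls away from $T$ (handled by the Caffarelli--Cabr\'e interior estimate) and by small pieces centered at nearby interface points (handled by the rescaled Theorem~\ref{thm:holder}), then glue the local bounds using $2\|u\|_{L^\infty}$ for distant pairs. One justification is slightly off, though harmless. You claim that $y_0$ is within $3/4$ of $z$, but the projection can be farther from $z$ than $y$ is; the inclusion $y_0+Q_c\subset B_1(z)$ instead follows from $|y_0-z|\le |y-z|+\delta(y)\le \tfrac12+\tfrac c2$, and the case split on $d$ is unnecessary.
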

\begin{proof}
Combining interior estimates and Theorem~\ref{thm:holder}, by a standard scaling and covering argument, the solution $u$ satisfies
\begin{equation}
\label{eq:auxilio}
[u]_{C^{\alpha}(\overline{B}_{1/2})} \leq
C \left(
\|u\|_{L^{\infty}(B_1)} + \|f^{+}\|_{L^n(B^{+}_1)} + \|f^{-}\|_{L^n(B^{-}_1)}+ \|g\|_{L^{\infty}(T_1)}
\right)
\end{equation}
where $C > 0$ is some universal constant.

\medskip

Take a dimensional number of points $x_k \in B_{R/2}(x)$ such that 
$B_{R/2}(x) \subset \cup_k B_{R/16}(x_k)$.
Keep the interior balls $\{B_{R/16}(x_i)\}_{i}$ with 
$B_{R/8}(x_i)\subset \R^n \setminus \{x_n = 0\}$,
and note that $B_{R/8}(x_i) \subset B_{R}(x)$ by assumption.
For the remaining balls $\{B_{R/16}(x_j)\}_{j}$, 
we can find points $y_j \in T \cap B_{R/8}(x_j)$, whence $B_{R/16}(x_j) \subset B_{3R/16}(y_j)$ and also $B_{3R/8}(y_j) \subset B_{R/4}(x_j) \subset B_{R}(x)$ by assumption.

 \medskip

It follows that we can cover $B_{R/2}(x)$ by the families $\{B_{R/16}(x_i)\}_{i}$ and $\{B_{3R/16}(y_j)\}_{j}$.
For the first family, by interior estimates we have that
\[
\begin{split}
 R^{\alpha} [u]_{C^{\alpha}(\overline{B}_{R/16}(x_i))} \leq 
 C \left( \|u\|_{L^{\infty}(B_{R/8}(x_i))} + R \|f\|_{L^{n}(B_{R/8}(x_i))}\right),
\end{split}
\]
and for the second family, by~\eqref{eq:auxilio} (translated and rescaled), we deduce
\[
\begin{split}
&R^{\alpha} [u]_{C^{\alpha}(\overline{B}_{3R/16}(y_i))} \\
& \qquad \leq
C \left( \|u\|_{L^{\infty}(B_{3 R/8}(y_i))} 
+ R \|f^{+}\|_{L^{n}(B^{+}_{3R/8}(y_i))}
 + R \|f^{-}\|_{L^{n}(B^{-}_{3R/8}(y_i))}
 +  R\|g\|_{L^{\infty}(T_{3R/8}(y_i))}
\right).\\
\end{split}
\]
Adding all estimates, we obtain the desired bound on $\overline{B}_{R/2}(x)$.
\end{proof}

%%%%%%%%%%%%%%%%%%%%%%%%%%%%%%%%%%%%%%%%%

\section{Regularity of constant-coefficient homogeneous problems}
\label{sec:piecewise}

We are now in a position to prove Theorem~\ref{thm:piecewise}, the piecewise $C^{1,\overline{\alpha}}$ estimate for constant-coefficient homogeneous transmission problems.
The reasoning is classical and goes as follows.
Since horizontal difference quotients belong to the solution class (see Lemma~\ref{lem:diff} below), 
they satisfy a $C^{\alpha}$ estimate (Section~\ref{sec:holder}).
Iterated, this result yields $C^{1,\alpha}$ regularity in the tangential variables.
In particular, the trace $u|_{T}$ is a $C^{1,\alpha}$ function whence, by boundary regularity on each side, it follows that the restrictions $u^{\pm}$ are $C^{1,\overline{\alpha}}$ up to the interface, for some $\overline{\alpha} \leq \alpha$.

\medskip

\begin{proof}[Proof of~Theorem~\ref{thm:piecewise}]
It will suffice to prove a variant of the estimate in the unit cylinder.
Thus, we first assume that $u \in C(Q_1)$ is a viscosity solution to
\begin{equation}
\label{eq:cubepb}
\begin{cases}
F^{\pm}(D^2 u) = 0 & \text{ in } Q_1^{\pm}\\
G(\nabla u^{+}, \theta \nabla u^{-}) = 0 & \text{ in } T_1,\\
\end{cases}
\end{equation}
where, using the notation from Section~\ref{sec:holder}, we write $Q_R = B_R' \times (-R, R) \subset \R^{n-1} \times \R$ for $R > 0$.

\medskip

The reasoning is similar to the proof of Corollary~5.7 in~\cite{CC}.
From~\eqref{eq:cubepb}, we have that
\[
u \in S(\lambda, \Lambda, \mu, \theta; - F^{+}(0),- F^{-}(0),- G(0,0)) \quad \text{ in } Q_1
\]
(see Proposition~2.16 in~\cite{E}), and hence by the $C^{\alpha}$ estimate (Theorem~\ref{thm:holder}) we deduce
\begin{equation}
\label{eq:har}
\|u\|_{C^{\alpha}(\overline{Q}_{1/2})} \leq C \left(\|u\|_{L^{\infty}(Q_1)} + |F^{+}(0)| + |F^{-}(0)| + |G(0,0)| \right) =: C K,
\end{equation}
where $K := \|u\|_{L^{\infty}(Q_1)} + |F^{+}(0)| + |F^{-}(0)| + |G(0,0)|$.
Here and throughout the proof, we write $C$ to denote a positive universal constant that may change from line to line.

\medskip
First, we note the following corollary of the comparison principle (Theorem~\ref{thm:comp}) concerning horizontal difference quotients:
\begin{lem}[Corollary~4.6 in~\cite{E}]
\label{lem:diff}
Let $u \in C(Q_R)$ be a viscosity solution to
\[
\begin{cases}
F^{\pm}(D^2 u) = 0 & \text{ in } Q^{\pm}_R,\\
G(\nabla u^{+}, \theta \nabla u^{-}) = 0 & \text{ on } T_R.
\end{cases}
\]
Then, for all horizontal vectors $e =(e', 0) \in \R^{n-1} \times \R$, we have that
\[
u(x + e) - u(x) \in S(\lambda,\Lambda,\mu,\theta; 0,0, 0) \quad \text{ in } Q_{R-|e|}.
\]
\end{lem}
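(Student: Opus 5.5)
Set $v(x) := u(x+e) - u(x)$ for a fixed horizontal vector $e = (e',0)$. The plan is to apply the comparison principle (Theorem~\ref{thm:comp}) twice: once to the pair $(u(\cdot+e), u)$ and once to the pair $(u, u(\cdot + e))$, both on the cylinder $Q_{R-|e|}$.

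\textbf{Step 1: the translate is again a solution.} Let $w(x) := u(x+e)$, defined on $Q_{R-|e|}$. Because $e$ is horizontal, the translation $x \mapsto x+e$ maps $Q^{\pm}_{R-|e|}$ into $Q_R^{\pm}$ and $T_{R-|e|}$ into $T_R$. It also preserves the restrictions to each phase, so $w^{\pm}(x) = u^{\pm}(x+e)$.

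Now let a piecewise quadratic $P$ touch $w$ at $x_0$. Then $P(\cdot - e)$ is again piecewise quadratic: its kink lies on $\{x_n = 0\}$ precisely because $e_n = 0$. Moreover, $P(\cdot - e)$ touches $u$ at $x_0 + e$, and it has the same Hessians $D^2P^{\pm}$ and the same one-sided gradients $\nabla P^{\pm}(x_0)$ at the touching point.

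The operators $F^{\pm}$ and $G$ have constant coefficients and the right-hand sides are zero. Hence the viscosity inequalities of Definition~\ref{def:visc} for $u$ at $x_0 + e$ transfer verbatim to $w$ at $x_0$. So $w$ is a viscosity solution of the same problem in $Q_{R-|e|}$.

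\textbf{Step 2: the subsolution class.} Apply Theorem~\ref{thm:comp} in $Q_{R-|e|}$ with $w$ as the subsolution and $u$ as the supersolution, taking all data $f^{\pm} = \widetilde f^{\pm} = 0$ and $g = \widetilde g = 0$. This gives
\[
w - u \in \underline{S}(\lambda,\Lambda,\mu,\theta;0,0,0).
\]
Theorem~\ref{thm:comp} is stated on balls, but the cylinder $Q_{R-|e|}$ is harmless. The class membership is a local property, so one can apply the theorem on small balls around each point: centered on the interface near interface points, and interior balls elsewhere.

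\textbf{Step 3: the supersolution class.} Apply Theorem~\ref{thm:comp} again, now with $u$ as the subsolution and $w$ as the supersolution. This gives
\[
u - w \in \underline{S}(\lambda,\Lambda,\mu,\theta;0,0,0).
\]
By the duality $\mcal^{-}(M) = -\mcal^{+}(-M)$ and $\tcal^{-}(\xi^+,\xi^-) = -\tcal^{+}(-\xi^+,-\xi^-)$, a function $z$ lies in $\underline{S}(\ldots;0,0,0)$ exactly when $-z$ lies in $\overline{S}(\ldots;0,0,0)$. Applying this with $z = u - w$ yields
\[
v = w - u \in \overline{S}(\lambda,\Lambda,\mu,\theta;0,0,0).
\]
Combining with Step 2, $v \in S(\lambda,\Lambda,\mu,\theta;0,0,0)$ in $Q_{R-|e|}$.

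\textbf{Main obstacle.} All the real difficulty sits inside Theorem~\ref{thm:comp}, which we are allowed to assume. The only delicate point remaining is the translation invariance in Step 1, and it genuinely needs $e_n = 0$. A vertical shift would move the interface, and the translated test function would no longer be piecewise quadratic with respect to $\{x_n = 0\}$.
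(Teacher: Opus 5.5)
Your proof is correct and follows the paper's route: the paper presents the lemma as a direct corollary of the comparison principle (Theorem~\ref{thm:comp}), applied to the horizontal translate $u(\cdot+e)$, which is again a solution because the constant-coefficient problem is invariant under horizontal translations. Your localization remark (using the classical interior comparison at bulk points) and the duality $\mcal^{-}(M) = -\mcal^{+}(-M)$, $\tcal^{-}(\xi^{+},\xi^{-}) = -\tcal^{+}(-\xi^{+},-\xi^{-})$, which passes from $\underline{S}$ to $\overline{S}$, are exactly the routine details the paper leaves implicit.
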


\medskip

Let $e$ be a horizontal unit vector $e = (e',0) \in \R^{n-1}\times \R$, with $|e| =1$.
For $0 < h < 1$ and $0 < \beta \leq 1$, we define $v_{\beta, h}$ to be the tangential difference quotient
\[
v_{\beta, h} :=
\frac{u(\cdot + h e) - u}{h^{\beta}}.
\]
By Lemma~\ref{lem:diff}, we have that $v_{\beta, h} \in S(\lambda, \Lambda, \mu, \theta; 0,0,0)$ in $Q_{1- h}$, hence by Theorem~\ref{thm:holder} rescaled
\begin{equation}
\label{eq:har2}
\|v_{\beta,h}\|_{C^{\alpha}(\overline{Q}_r)} \leq C r^{-\alpha} \|v_{\beta,h}\|_{L^{\infty}(Q_{2r})} 
\leq C r^{-\alpha} \sup_{|x_n| < 4 r} \|u(\cdot, x_n)\|_{C^{\beta}(\overline{B}'_{4r})},
\end{equation}
for $0 < r \leq 1/8$ and $h < 2r$.

\medskip

Making $\alpha$ smaller, we may assume that there is an integer $k$ such that $(k-1) \alpha < 1$ and $k \alpha > 1$.
Applying~\eqref{eq:har2} with $\beta = \alpha$ and $r = 1/8$, by~\eqref{eq:har} we obtain
\[
\|v_{\alpha,h}\|_{C^{\alpha}(\overline{Q}_{1/8})} \leq C \|u\|_{C^{\alpha}(\overline{Q}_{1/2})} \leq C K,
\]
where $h < 1/4$.
Since the horizontal vector $e$ was arbitrary, it follows that $u$ is $C^{2\alpha}$ in the horizontal direction, with the estimate
\[
\sup_{|x_n| < 1/8}\|u(\cdot,x_n)\|_{C^{2\alpha}(\overline{B}'_{1/8})}
\leq C K.
\]
Applying~\eqref{eq:har2} iteratively with $\beta = j \alpha$ and $2 \leq j < k-1$, 
we deduce that $u$ is $C^{(k-1) \alpha}$ in the horizontal direction, with an estimate.
Using~\eqref{eq:har2} again, we see that $u$ is Lipschitz continuous in the horizontal variable, with
\[
\sup_{|x_n| < 2^{-2k+1}} \|u(\cdot, x_n)\|_{C^{0,1}(\overline{B}'_{2^{-2k+1}})} \leq C 8^{-(k-1)\alpha} K,
\]
and by~\eqref{eq:har2} with $\beta = 1$ we obtain
\begin{equation}
\label{eq:theold}
\|v_{1,h}\|_{C^{\alpha}(\overline{Q}_{2^{-2k-1}})} \leq C 8^{-k\alpha}K.
\end{equation}

\medskip

Letting $h \downarrow 0$ in~\eqref{eq:theold}, 
we deduce that $u$ is differentiable in the horizontal direction and
\[
\|u_{x_i}\|_{C^{\alpha}(\overline{Q}_{r_k})} \leq C_k K \qquad \text{ for } i < n,
\]
where $r_k := 2^{-2k-1}$ and $C_k > 0$ denotes a generic constant depending only on $n$, $\lambda$, $\Lambda$, $\mu$, and $k$.
In particular, the trace $u|_{T}$ is $C^{1,\alpha}$ on $\overline{T}_{r_k}$, with an estimate
\begin{equation}
\label{eq:har3}
\|u|_{T}\|_{C^{1,\alpha}(\overline{T}_{r_k})} \leq C_k K.
\end{equation}
Since the restrictions $u^{\pm} \in C(Q_{r_k}^{\pm} \cup T_{r_k})$  solve the Dirichlet problems
\[
\begin{cases}
F^{\pm}(D^2 u^{\pm}) = 0 & \text{ in } Q_{r_k}^{\pm},\\
u^{\pm} = u|_{T}&  \text{ on } T_{r_k},\\
\end{cases}
\]
boundary regularity (for instance, see Proposition~2.2 in~\cite{MS}) and \eqref{eq:har3} finally give the estimate
\begin{equation}
\label{eq:theone}
\|u^{\pm}\|_{C^{1,\overline{\alpha}}(\overline{Q}^{\pm}_{r_k/2})} \leq C_k \left(
\|u|_{T}\|_{C^{1,\alpha}(\overline{T}_{r_k})} + \|u^{\pm}\|_{L^{\infty}(Q^{\pm}_{r_k})} + |F^{\pm}(0)| \right) \leq C_k K,
\end{equation}
with $\overline{\alpha} > 0$ universal and possibly smaller.

\medskip

Therefore, viscosity solutions to~\eqref{eq:cubepb} satisfy the piecewise $C^{1,\overline{\alpha}}$ estimate~\eqref{eq:theone}.
A standard scaling and covering argument easily  yields the claim in the original setting of half-balls.

\medskip

It remains to prove that $u$ satisfies the transmission condition in the classical sense.
Let $x_0 \in T_{3/4}$.
By piecewise $C^{1,\alpha}$ regularity, there is a piecewise linear function $\ell$ such that
\[
|u - \ell| \leq C_0 r^{1+\alpha} \quad \text{ in } B_r(x_0) \subset B_1,
\]
for all $0< r \leq 1/4$, for some constant $C_0 >0$.
For such an $r> 0$ and $0 < \vep < 1$, let $w$ be the piecewise quadratic barrier given by
\[
w(x) := \ell(x) 
+ C_0 r^{\alpha-1} \left( \frac{1}{\vep} \left(2 r |x_n| -  x_n^2 \right)
+ |x'- x_0'|^2 \right).
\]
Choosing $\vep > 0$ sufficiently small, we ensure that
\begin{equation}
\label{eq:contra}
\textstyle
\mcal^{+} (D^2 w; \frac{\lambda}{n}, \Lambda) < F^{\pm}(0).
\end{equation}
Recalling that $x_0 \in T$, on the boundary $\partial B_r(x_0)$ we have that
\[
w - \ell 
\geq C_0 r^{\alpha-1} \left( 
\vep^{-1} 
r |x_n| + |x'-x_0'|^2\right)
\geq C_0 r^{\alpha-1} |x- x_0|^2 = C_0 r^{1 +\alpha} \geq u - \ell
\]
and hence
\[
w \geq u \quad \text{ on } \partial B_r(x_0).
\]
Since $w(x_0) = u(x_0)$, there is a vertical translation $w + c$, with $c \geq 0$, touching $u$ from above at some $x_r \in B_r(x_0)$.
In fact, $x_r \in T_r(x_0)$, since otherwise the bulk equations would violate~\eqref{eq:contra}.

\medskip

By the transmission condition at $x_r$, we conclude
\[
G(\nabla w^{+}(x_r), \theta \nabla w^{-}(x_r)) \geq g(x_r),
\]
or equivalently
\[
G(\nabla \ell^{+} + 2 C_0 r^{\alpha-1}[ \vep^{-1} r e_n + x_r'-x_0'], \theta 
\nabla \ell^{-}
+ \theta 2 C_0 r^{\alpha-1}[ - \vep^{-1} r e_n + x_r'-x_0']
) \geq g(x_r),
\]
whence by uniform ellipticity
\[
g(x_r) \leq G(\nabla \ell^{+}, \theta \nabla \ell^{-}) + 4C_0 \Lambda r^{\alpha} \vep^{-1} + 4 C_0 \mu r^{\alpha},
\]
and letting $r \downarrow 0$
\[
g(x_0) \leq G(\nabla \ell^{+}, \theta \nabla \ell^{-}) = G(\nabla u^{+}(x_0), \theta \nabla u^{-}(x_0)).
\]

\medskip

Using $2 \ell- w$ instead of $w$ and arguing similarly, the other inequality follows.
\end{proof}

\medskip

For linear equations, we can keep iterating Theorem~\ref{thm:piecewise} to deduce piecewise smoothness:

\begin{cor}
Let $A^{\pm} \in \ellmat$, $\gamma^{\pm} \in \ellvec$, $f^{\pm} \in \R$, $g \in \R$, and $0 \leq \theta \leq 1$ be constants.
Let $u \in C(B_1)$ be a viscosity solution to the linear constant-coefficient transmission problem
\[
\begin{cases}
\tr{A^{\pm} D^2 u} = f^{\pm} & \text{ in } B_1^{\pm},\\
\gamma^{+} \cdot \nabla u^{+}  - \theta \gamma^{-} \cdot \nabla u^{-} = g
 & \text{ on } T_1.
\end{cases}
\]
Then $u$ is piecewise $C^{\infty}$ with an estimate
\[
\|u\|_{C^k(\overline{B}_{1/4})} \leq C_k \left(\|u\|_{L^{\infty}(B_1)} + |f^{+}| +|f^{-}| + |g| \right)
\]
for each $k \geq 0$, where $C_k$ depends only on $n$, $\lambda$, $\Lambda$, $\mu$, and $k$.
\end{cor}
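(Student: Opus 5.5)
The plan is to iterate Theorem~\ref{thm:piecewise} in the tangential directions, which is possible because the problem is linear with constant coefficients. After this, the normal derivatives are recovered from the equations themselves.

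\emph{Reduction to the homogeneous problem.} First we subtract an explicit piecewise quadratic solution of the inhomogeneous problem. Take $P^{\pm}(x) = \frac{a^{\pm}}{2} x_n^2 + b^{\pm} x_n$ with $a^{\pm} = f^{\pm}/A^{\pm}_{nn}$; note that $A^{\pm}_{nn} \geq \lambda$. Then choose $b^{\pm}$ so that $\gamma^{+}_n b^{+} - \theta \gamma^{-}_n b^{-} = g$, for instance $b^{-} = 0$ and $b^{+} = g/\gamma^{+}_n$. This $P$ is continuous across $T$ and is a classical, hence viscosity, solution. Its norms are bounded by $C(|f^{+}|+|f^{-}|+|g|)$. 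So we may assume $f^{\pm} = 0$ and $g = 0$.

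\emph{Tangential derivatives.} For the homogeneous problem, Theorem~\ref{thm:piecewise} gives $u^{\pm} \in C^{1,\overline{\alpha}}$, with the bound $\|u^{\pm}\|_{C^{1,\overline\alpha}(\overline B^{\pm}_{1/2})} \leq C \|u\|_{L^{\infty}(B_1)}$. Since the problem is linear and translation invariant in $x'$, each tangential difference quotient $(u(\cdot + h e) - u)/h$ is again a viscosity solution of the same homogeneous problem. This holds by linearity and the definition with piecewise quadratic test functions; it is also implicit in Lemma~\ref{lem:diff}, because linearity makes the relevant operators exactly the given ones.

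Applying Theorem~\ref{thm:piecewise} to these quotients gives uniform $C^{1,\overline\alpha}$ bounds. Then we pass to the limit $h \downarrow 0$. Uniform convergence together with stability of viscosity solutions for this definition (a standard argument, as in~\cite{E}) shows that $u_{x_i}$, $i<n$, is a solution with the same bound. Iterating on shrinking balls, with a dimensional number of covering and scaling steps per order, we get that $D'^{\beta} u$ is a solution and piecewise $C^{1,\overline\alpha}$ for every multi-index $\beta$ in the tangential variables. The resulting estimate has the form $\|D'^{\beta} u^{\pm}\|_{C^{1,\overline\alpha}(\overline B^{\pm}_{r_{|\beta|}})} \leq C_{|\beta|}\|u\|_{L^\infty(B_1)}$, where the radii $r_{|\beta|}$ decrease but can be chosen at least $1/4$. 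For instance, take $r_j = \frac14 + 2^{-j-2}$ and absorb the scaling constants into $C_k$.

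\emph{Normal derivatives.} In each phase the equation $\operatorname{Tr}(A^{\pm}D^2u)=0$ can be solved for the normal second derivative: $u_{x_nx_n} = -(A^{\pm}_{nn})^{-1}\sum_{(i,j)\neq(n,n)} A^{\pm}_{ij} u_{x_ix_j}$. Each term on the right contains at least one tangential derivative. By induction on the number $m$ of normal derivatives, every derivative $D'^{\beta}\partial_n^m u^{\pm}$ is expressed through quantities with at most one normal derivative applied to a tangential derivative of $u$. These are controlled up to $T$ by the previous step, since $\partial_n D'^{\beta} u^{\pm}$ is $C^{\overline\alpha}$ up to the interface. This gives $u^{\pm} \in C^{\infty}(\overline B^{\pm}_{1/4})$ with the stated bounds; interior smoothness away from $T$ is classical.

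\emph{Main obstacle.} The main technical point is justifying that the limits of the difference quotients are viscosity solutions of the transmission problem, so that Theorem~\ref{thm:piecewise} applies to them. I would handle this by the standard stability argument for uniformly converging solutions. The touching-point perturbation on $T$ uses Remark~\ref{rem:trick} to force contact points to lie on the interface, exactly as in the proof of the classical transmission condition above. Alternatively, one can avoid limits altogether: apply Theorem~\ref{thm:piecewise} to the $k$-th order tangential difference quotients, which are genuine solutions, and pass to the limit only in the resulting uniform estimates.
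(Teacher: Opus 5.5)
Your proof is correct, but it reaches regularity in the normal direction by a different mechanism than the paper. Both arguments iterate Theorem~\ref{thm:piecewise} on tangential derivatives.

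\textbf{Comparison with the paper.} The paper then observes that the trace $u|_{T}$ is $C^{2,\alpha}$ (and of higher order after further tangential differentiation). It invokes boundary regularity for the Dirichlet problem in each half-ball, repeating this at every order. You instead solve the bulk equation for $u_{x_nx_n}$ and reduce every $\partial_{x'}^{\beta}\partial_{x_n}^{m}u^{\pm}$ to derivatives with $m\le 1$. The tangential step already controls these up to $T$.

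Your route is more elementary. It needs no boundary Schauder theory beyond what is used inside Theorem~\ref{thm:piecewise}, and the normal derivatives cost no further shrinking of radii. It does rely explicitly on the linear constant-coefficient structure, namely $A^{\pm}_{nn}\ge\lambda$. The paper's route treats each phase as a black-box Dirichlet problem, mirroring the proof of Theorem~\ref{thm:piecewise}. Your preliminary subtraction of the explicit piecewise quadratic $P$ is a harmless variant. The paper simply applies Theorem~\ref{thm:piecewise} with $F^{\pm}(0)=-f^{\pm}$ and $G(0,0)=-g$.

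\textbf{One justification needs to be replaced.} That $v_h:=(u(\cdot+he)-u)/h$ is a viscosity solution does not follow \emph{by linearity and the definition}. A piecewise quadratic touching a difference cannot be split into test functions for the two terms. Likewise, Lemma~\ref{lem:diff} only places $v_h$ in the class $S(\lambda,\Lambda,\mu,\theta;0,0,0)$, which is not enough to apply Theorem~\ref{thm:piecewise}.

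Use the regularity you already have instead. By Theorem~\ref{thm:piecewise} and interior theory, $u$ is smooth in each open phase, $u^{\pm}\in C^{1,\overline{\alpha}}$ up to $T$, and the transmission condition holds classically. So $v_h$ is a classical piecewise-$C^1$ solution. Such a function is a viscosity solution. If a piecewise quadratic $Q$ touches $v_h$ from above at $x_0\in T$, then $\nabla' Q(x_0)=\nabla' v_h(x_0)$, $\partial_{x_n}Q^{+}(x_0)\ge \partial_{x_n}v_h^{+}(x_0)$, and $\partial_{x_n}Q^{-}(x_0)\le\partial_{x_n}v_h^{-}(x_0)$. Then $\gamma^{\pm}_n>0$ and $\theta\ge0$ give the required inequality.

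The same observation makes the limit $h\downarrow0$ routine. Uniform $C^{1,\overline{\alpha}}$ bounds give $C^1$ convergence up to $T$, so no stability argument is needed. The paper itself is equally brief on this point.
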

\begin{proof}
By Theorem~\ref{thm:piecewise}, we know that $u^{\pm} \in C^{1,\overline{\alpha}}(B_1^{\pm} \cup T_1)$
satisfies an estimate
\begin{equation}
\label{eq:wha}
\|u^{\pm}\|_{C^{1,\overline{\alpha}}(\overline{B}^{\pm}_{1/2})} \leq C \left(\|u\|_{L^{\infty}(B_1)} + |f^{+}| +|f^{-}| + |g| \right) =: C K,
\end{equation}
where $K := \|u\|_{L^{\infty}(B_1)} + |f^{+}| +|f^{-}| + |g|$.

\medskip

Differentiating the equations in the horizontal direction, we see that the tangential derivatives of $u$ (which are continuous functions) satisfy the homogeneous transmission problem
\begin{equation}
\label{eq:invariant}
\begin{cases}
\tr{A^{\pm} D^2 v} = 0 & \text{ in } B_1^{\pm},\\
\gamma^{+} \cdot \nabla v^{+}  - \theta \gamma^{-} \cdot \nabla v^{-}= 0 & \text{ on } T_1,
\end{cases}
\end{equation}
in the viscosity sense.
By Theorem~\ref{thm:piecewise} (rescaled etc.) and~\eqref{eq:wha}, we deduce
\[
\|u_{x_i}\|_{C^{1,\overline{\alpha}}(\overline{B}_{r_2})} \leq C(r_1,r_2) \|u_{x_i}\|_{L^{\infty}(B_{r_1})} \leq C(r_1, r_2) K
\]
for $r_2 < r_1 \leq 1/2$, where $C(r_1,r_2) > 0$ depends only on $n$, $\lambda$, $\Lambda$, $\mu$, $r_1$, and $r_2$.
It follows in particular that $u|_{T} \in C^{2,\alpha}(\overline{T}_{r_2})$,
and by boundary regularity of the Dirichlet problem,
for $r_3 < r_2$ we have that
\[
\|u^{\pm}\|_{C^{2,\alpha}(\overline{B}_{r_3})} \leq C(r_2,r_3) \left( \|u|_{T}\|_{C^{2,\alpha}(\overline{T}_{r_2})} + \|u\|_{L^{\infty}(B_{r_2})}\right) \leq C(r_1, r_2, r_3) K,
\]
proving the claim for $k =2$ after an appropriate choice of $r_1$, $r_2$, $r_3$.

\medskip

Exploiting the invariance of~\eqref{eq:invariant} under horizontal translations, 
taking further tangential derivatives and arguing as above, it is not difficult to deduce the claim.
\end{proof}

%%%%%%%%%%%%%%%%%%%%%%%%%%%%%%%%%%%%%%%%%

\section{Equicontinuity up to the boundary}
\label{sec:barrier}

This section is devoted to the boundary behavior of viscosity solutions to transmission problems.
Our main result states that functions in the solution class $S$ are continuous up to the boundary, with a modulus of continuity depending only on the boundary datum and the source terms.
This is achieved by constructing appropriate barriers, combined with the ABP maximum principle.
Such an equicontinuity result is fundamental to the perturbation theories developed in Sections~\ref{sec:inhom} and~\ref{sec:pert}. 

\medskip

Below, by a \emph{modulus of continuity} $\omega$ for a function $u$ in some set $\Omega \subset \R^n$, we mean a function $\omega \colon [0, \infty) \to [0, \infty)$ such that
\[
|u(x) - u(y)| \leq \omega(|x-y|) \qquad \text{ for all } x \text{ and } y \text{ in } \Omega.
\]

\begin{thm}
[Equicontinuity up to the boundary-- c.f. Proposition~4.14 in~\cite{CC}]
\label{thm:barriers}
Let $u \in C(\overline{B}_{1})$ satisfy $u \in S(\lambda, \Lambda, \mu, \theta; f^{+}, f^{-}, g)$ in $B_1$, where $f^{\pm} \in C(B_1^{\pm} \cup T_1)$ and $g \in C(T_1)$.
Let $\phi := u|_{\partial B_1}$ and let $\omega(|x-y|)$ be a modulus of continuity for $\phi$ on $\partial B_1$.
Assume that $K$ is a positive constant such that
\[
\|\phi\|_{L^{\infty}(\partial B_1)} + \|f^{+}\|_{L^{n}(B_1^{+})} + \|f^{-}\|_{L^{n}(B_1^{-})} + \|g\|_{L^{\infty}(T_1)} \leq K.
\]
Then there is a modulus of continuity $\omega^{\star}$ of $u$ in $\overline{B}_{1}$, depending only on $n$, $\lambda$, $\Lambda$, $\mu$, $K$, and $\omega$.
\end{thm}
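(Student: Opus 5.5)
The plan follows the classical argument of Proposition~4.14 in~\cite{CC}: glue an interior modulus of continuity with a boundary modulus at points of $\partial B_1$. By Corollary~\ref{cor:trueint} and ABP (Theorem~\ref{thm:abp}), $\|u\|_{L^\infty(B_1)}\le CK$. Then for any ball $B_R(x)\subset B_1$,
\[
R^{\alpha}[u]_{C^{\alpha}(\overline B_{R/2}(x))}\le C K,
\]
so $u$ has an interior H\"older modulus depending on the distance to $\partial B_1$. The accessory lemma in Appendix~\ref{app:boundary} will combine this with a modulus at boundary points. So the heart of the matter is: for each $x_0\in\partial B_1$ and $x\in B_1$, show
\[
|u(x)-\phi(x_0)|\le \widetilde\omega(|x-x_0|),
\]
with $\widetilde\omega$ depending only on $n,\lambda,\Lambda,\mu,K,\omega$.

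For the boundary modulus we use barriers and ABP. Fix $x_0\in\partial B_1$ and $\vep>0$, and pick $\delta$ with $\omega(\delta)\le\vep$. We want a function $w$ with the following properties:
\begin{itemize}
\item $w\ge0$ on $\overline B_1$ and $w(x_0)=0$;
\item $w\ge 1$ on $\partial B_1\setminus B_\delta(x_0)$ (after scaling);
\item $w$ belongs to a strict supersolution class, i.e.\ $\mcal^-(D^2w;\frac{\lambda}{n},\Lambda)\le 0$ in $B_1^\pm$ and $\tcal^-(\nabla w^+,\theta\nabla w^-)\le 0$ on $T_1$ in the viscosity sense.
\end{itemize}
Then $v=\phi(x_0)+\vep+2K\delta^{-?}\,w$, suitably scaled, dominates $\phi$ on $\partial B_1$. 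By Theorem~\ref{thm:comp}-type reasoning, valid here because $u-v\in\underline S$ with the extremal operators (no comparison of general operators is needed, only the extremal class), ABP gives
\[
u\le \phi(x_0)+\vep+CKw+C(\|f^\pm\|_{L^n}+\|g\|_{L^\infty}).
\]
The source contribution is not small, though. To make it vanish near $x_0$, we localize: apply ABP in $B_1\cap B_r(x_0)$, which produces a factor $r$ in front of the data (Theorem~\ref{thm:abp} has the factor $R$). Letting $x\to x_0$ and then $r,\vep\to0$ yields the modulus.

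The main obstacle is building $w$ when $x_0$ lies on or near the interface $\{x_n=0\}$, where the barrier must respect the transmission condition. Away from the interface (say $|x_{0,n}|>\delta$), the standard exterior-ball barrier $w=|x-y|^{-p}$-type function with large $p$, on a ball centered outside, works in one phase. Near the interface I would try the radial barrier
\[
w(x)=c\big(|x-y|^{-p}\ \text{reversed}\big)=r_0^{-p}-|x-(1+r_0)x_0|^{-p}.
\]
It is smooth across $T$, so $\nabla w^+=\nabla w^-$ on $T$ and the transmission condition reduces to the sign of $\tcal^-(\nabla w,\theta\nabla w)$. Its normal components have the same sign, and this can fail to be $\le 0$ when $\theta<1$. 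Hence I would modify $w$ into a piecewise function: on $T$, add to $w$ a term like $-\kappa |x_n|$ or use different normal slopes $a^+x_n$ versus $a^-x_n$ (as in the barrier $\varphi$ of Lemma~\ref{lem:harnack}) so that $\lambda w_{x_n}^+$ dominates, with the mismatch absorbed by concavity in $x_n$ to keep $\mcal^-\le0$ in the bulk. Concretely, I would take
\[
\widetilde w=w+M(x_n)_+-\tfrac{M}{\eta}x_n^2\quad\text{for the sign fixed by supersolution}
\]
in a thin slab, verify the bulk inequality from the large negative normal curvature outweighing the radial part, check the interface inequality via \eqref{tminus} independently of $\theta$, and ensure positivity away from $x_0$. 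Getting these three constraints compatible with constants depending only on universal data and $\delta$ is where I expect the real work.
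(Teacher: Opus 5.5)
Your outline matches the paper's: ABP for the $L^{\infty}$ bound, Corollary~\ref{cor:trueint} for the interior modulus, barriers plus ABP localized to $B_1\cap B_r(x_0)$, and gluing as in Appendix~\ref{app:boundary}. However, the one non-classical step, a barrier at points $x_0\in\partial B_1\cap T$, is left open, and the construction you sketch cannot be completed.

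First, the inequalities you impose on $w$ are the wrong ones. The subsolution information on $u$ is $\mcal^{+}(D^2u;\frac{\lambda}{n},\Lambda)\ge f^{\pm}$ and $\tcal^{+}(\nabla u^{+},\theta\nabla u^{-})\ge g$. By subadditivity of $\mcal^{+}$ and $\tcal^{+}$, obtaining $u-Cw\in\underline{S}$ (or $u+Cw\in\overline{S}$ for the lower bound) therefore requires $\mcal^{+}(D^2w;\frac{\lambda}{n},\Lambda)\le 0$ in $B_1^{\pm}$ and $\tcal^{+}(\nabla w^{+},\theta\nabla w^{-})\le 0$ on $T_1$. Your condition $w\in\overline{S}(\lambda,\Lambda,\mu,\theta;0,0,0)$ is not enough.

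This is what hides the real difficulty. For $x_0\in T$, the center $(1+r_0)x_0$ of your exterior ball lies on $T$, so $w_{x_n}=0$ on $T$. Hence $\tcal^{-}(\nabla w,\theta\nabla w)=-(1+\theta)\mu|\nabla' w|\le 0$ holds trivially, whereas $\tcal^{+}(\nabla w,\theta\nabla w)=(1+\theta)\mu|\nabla' w|>0$ when $\mu>0$. Repairing this, already for $\theta=0$, needs a concave kink $w^{+}_{x_n}\le -M$ on $T$ with $\lambda M>\mu|\nabla' w|$, i.e.\ a term $-M(x_n)_{+}$. But the radial part vanishes only quadratically at $x_0$ along $\partial B_1$, because the exterior ball is tangent there. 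Meanwhile $x_n=\sin t$ along the great circle $x=(\cos t)x_0+(\sin t)e_n$. So the modified barrier is negative on $\partial B_1\cap\{x_n>0\}$ arbitrarily close to $x_0$, violating $w\ge 0$ and hence $v\ge\phi$ on the boundary.

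This is not a matter of tuning $M$, $\eta$, $p$: when $\mu>0$, no barrier with $w\ge 0$, $w(x_0)=0$ can be Lipschitz at $x_0$. Near $x_0\in\partial B_1\cap T$, the upper phase is a right-angled corner where the Dirichlet condition meets an oblique condition whose tangential part may point away from the corner. Take the model quadrant $\{x_1<0,\ x_2>0\}\subset\R^2$, with $\theta=0$, the operator $\Lambda\Delta$, and $\gamma^{+}=(-\mu,\lambda)$. In polar coordinates $(r,\sigma)$ with $\frac{\pi}{2}<\sigma<\pi$, the function $r^{\alpha}|\cos(\alpha\sigma+\frac{\pi}{2}(1-\alpha))|$ is a positive solution vanishing on $\{x_1=0\}$. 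It satisfies $\gamma^{+}\cdot\nabla u=0$ on $\{x_2=0\}$ provided $\cot(\frac{\alpha\pi}{2})=\frac{\mu}{\lambda}$, so $\alpha<1$. Comparing it with a Lipschitz barrier via ABP would force $r^{\alpha}\lesssim r$, which is impossible.

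The missing ingredient is thus a genuinely H\"older barrier, which the paper imports from the proof of Lemma~5.2 in~\cite{E}. For each $x_0\in\partial B_1\cap T$ it is a piecewise smooth $h$ with:
$\mcal^{+}(D^2h;\frac{\lambda}{n},\Lambda)<0$ in $B_1^{\pm}$;
$\tcal^{+}(\nabla h^{+},\theta\nabla h^{-})<0$ on $T_1$;
$h>0$ on $\overline{B}_1\setminus\{x_0\}$;
$h\le C|x-x_0|^{\alpha}$ with universal $\alpha<1$.
Applying ABP in $B_1\cap B_{\delta_2}(x_0)$ to $u-u(x_0)\mp\vep\mp C_1h/h_{\delta_2}$, with $h_{\delta_2}=\inf_{B_1\cap\partial B_{\delta_2}(x_0)}h$, then gives the interface modulus $\omega_0$.

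Note also that the paper needs barriers only at points exactly on $\partial B_1\cap T$. Points with $|x_{0,n}|\ge\kappa$ are handled classically in a neighborhood avoiding $T$, with a $\kappa$-dependent modulus. The appendix reaches points with $|x_n|\le\kappa_0$ through $(x'/|x'|,0)$, so no barrier uniform as $x_{0,n}\to 0$ is required.
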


\begin{proof}
By interior estimates, it suffices to construct a modulus of continuity near the boundary. 
Since the behavior close to the interface differs essentially from that away from it, we distinguish two types of boundary moduli:
\begin{itemize}
\item for each $\kappa > 0$, a modulus $\omega_\kappa$ such that
\[
|u(x) - u(x_0)| \leq \omega_{\kappa}(|x-x_0|) \quad \text{ for all } x \in B_1 \text{ and } x_0 \in 
\{|x_n| \geq \kappa\} \cap \partial B_1;
\]
\item a modulus $\omega_0$ such that
\[
|u(x) - u(x_0)| \leq \omega_0(|x-x_0|) \quad \text{ for all } x \in B_1 \text{ and } x_0 \in \{x_n = 0\} \cap \partial B_1.
\]
\end{itemize}
Once the two classes of moduli are available, it is not difficult to combine them with the interior $C^{\alpha}$ estimates (Section~\ref{sec:holder}) into a modulus $\omega^{\star}$ in the full ball $\overline{B}_1$.
The interested reader may find the technical details in Appendix~\ref{app:boundary}.

\medskip

The construction of the first family $\{\omega_{\kappa}\}_{\kappa > 0}$ can be reduced to the classical theory.
Indeed, for each $\kappa > 0$, we can find a neighborhood $B_{\delta_{\kappa}}(x_0) \cap B_1$ of $x_0 \in \{|x_n| \geq \kappa\} \cap \partial B_1$, with $\delta_{\kappa}> 0$ depending only on $\kappa$, which does not intersect the interface $\{x_n = 0\}$.
Following the proof of Proposition~4.14 in~\cite{CC}, using the same linear barrier $h(x) = 1 - x \cdot x_0$ and the classical ABP, 
we obtain a modulus of continuity $\omega_{\kappa}$ as above.

\medskip

Therefore, it remains to construct the interface modulus $\omega_0$.
For simplicity, throughout the proof, we write $T$ for the ``global'' interface
\[
T = \{x_n = 0\}.
\]

\medskip

Let $\vep >0$.
We must show that 
\begin{equation}
\label{anima}
|u(x) - u(x_0)| \leq \vep \quad \text{ for all } x \in B_1 \text{ and } x_0 \in T \cap \partial B_1
\text{ such that } |x-x_0| \leq \delta,
\end{equation}
for some $\delta > 0$ depending only on $\vep$, $n$, $\lambda$, $\Lambda$, $\mu$, $K$, and $\omega$.

\medskip

We show~\eqref{anima} via a piecewise smooth barrier constructed in our previous paper~\cite{E}:

\begin{lem}[see the proof of Lemma~5.2 in~\cite{E}]
For each $x_0 \in \partial B_1 \cap T$, there is a function $h \in C(\overline{B}_1)$, with $h^{\pm} \in C^{\infty}(B_1^{\pm} \cup T_1)$,
such that
\begin{equation}
\label{thebar}
\begin{cases}
\mcal^{+}(D^2 h; \frac{\lambda}{n}, \Lambda) < 0 & \text{ in } B_1^{\pm},\\
\tcal^{+}(\nabla h^{+}, \theta \nabla h^{-}; \lambda, \Lambda, \mu) < 0 & \text{ on } T_1,\\
\end{cases}
\end{equation}
\begin{equation}
\label{posito}
h > 0 \quad \text{ on } \overline{B}_1 \setminus \{x_0\},
\end{equation}
\begin{equation}
\label{barthe}
h(x) \leq C |x-x_0|^{\alpha} \quad \text{ on } \overline{B}_1,
\end{equation}
where $0 < \alpha < 1$ and $C > 0$ are universal constants.
\end{lem}

\medskip

Let $x_0 \in \partial B_1$ and consider $h$ as above.
Take $\delta_1 > 0$ depending on $\vep$ and $\omega$ such that
\[
|\varphi(x) - \varphi(x_0)| = |u(x)- u(x_0)| \leq \vep \quad \text{ for all } x \in \partial B_1 \cap B_{\delta_1}(x_0).
\]
By the ABP estimate
\begin{equation}
\label{cos3}
|u(x) - u(x_0) \pm \vep| \leq 2 \|u\|_{L^{\infty}(B_1)} + \vep \leq C_1 \quad \text{ for all } x \in B_1,
\end{equation}
where $C_1 > 0$ depends only on $\vep$, $n$, $\lambda$, $\Lambda$, $\mu$, and $K$.
For $0 < \delta_2 \leq \delta_1$, define
\begin{equation}
\label{cos4}
h_{\delta_2} := \inf_{B_1 \cap \partial B_{\delta_2}(x_0)} h,
\end{equation}
and note that $h_{\delta_2} > 0$ by~\eqref{posito}.

\medskip

Let $\Omega := B_1 \cap B_{\delta_2}(x_0)$ and define the barrier functions
\[
\underline{\varphi}(x) = u(x)- u(x_0) - \vep - C_1 \frac{h(x)}{h_{\delta_2}}
\qquad \text{ and } \qquad
\overline{\varphi}(x) = u(x)- u(x_0) + \vep + C_1 \frac{h(x)}{h_{\delta_2}}.
\]
By~\eqref{thebar},~\eqref{cos3}, and~\eqref{cos4}, the functions $\underline{\varphi}$ and $\overline{\varphi}$ satisfy
\[
\begin{cases}
\underline{\varphi} \in \underline{S}(\lambda, \Lambda, \mu, \theta; f^{+}, f^{-}, g) & \text{ in } \Omega,\\
\underline{\varphi} \leq 0 & \text{ on } \partial \Omega
\end{cases}
\quad \text{ and } \quad
\begin{cases}
\overline{\varphi} \in \overline{S}(\lambda, \Lambda, \mu, \theta; f^{+}, f^{-}, g) & \text{ in } \Omega,\\
\overline{\varphi} \geq 0 & \text{ on } \partial \Omega.
\end{cases}
\]
The ABP maximum principle in $\Omega$ then gives that
\[
|u(x) - u(x_0)| \leq \vep + C_2 \diam{\Omega} + C_1\frac{h(x)}{h_{\delta_2}} \leq \vep + C_2 \delta_2 + C_1\frac{h(x)}{h_{\delta_2}} \quad \text{ in } \Omega,
\]
for some $C_2$ depending only on $n$, $\lambda$, $\Lambda$, $\mu$, and $K$.
Taking $\delta_2$ small such that $C_2 \delta_2 \leq \vep$, we obtain
\[
|u(x) - u(x_0)| \leq 2 \vep + C_1\frac{h(x)}{h_{\delta_2}} \quad \text{ in } \Omega.
\]
Using~\eqref{barthe} and taking $\delta \leq \delta_2$ sufficiently small, we finally deduce
\[
|u(x) - u(x_0)| \leq 3 \vep \quad \text{ in } B_1 \cap B_{\delta}(x_0),
\]
which is equivalent to~\eqref{anima}.
\end{proof}

Combined with our comparison principle (Theorem~\ref{thm:comp}),
Theorem~\ref{thm:barriers} allows us to approximate viscosity solutions to constant-coefficient inhomogeneous transmission problems by their homogeneous replacements.
This result is fundamental to the perturbation theory for such equations.

\begin{lem}
[Approximation I]
\label{lem:silly}
Let $u \in C(B_1)$ be a viscosity solution to 
\[
\begin{cases}
F^{\pm}(D^2 u) = f^{\pm}(x) & \text{ in } B_{1}^{\pm},\\
G(\nabla u^{+}, \theta \nabla u^{-}) = g(x) & \text{ on } T_{1},
\end{cases}
\]
and let $h \in C(\overline{B}_{3/4})$ be the unique viscosity solution to the boundary-value problem
\[
\begin{cases}
F^{\pm}(D^2 h) = 0 & \text{ in } B_{3/4}^{\pm},\\
G(\nabla h^{+}, \theta \nabla h^{-}) = 0 & \text{ on } T_{3/4},\\
h = u & \text{ on } \partial B_{3/4}.
\end{cases}
\]
Assume that $\|u\|_{L^{\infty}(B_1)} \leq 1$ and that
\[
\|f^{+}\|_{L^{n}(B^{+}_{1})}  + \|f^{-} \|_{L^{n}(B^{-}_{1})} + \|g \|_{L^{\infty}(T_{1})} \leq \delta.
\]

For each $\vep > 0$, there is $\delta >0$ small depending only on $\vep$, $n$, $\lambda$, $\Lambda$, and $\mu$ such that 
\[
\|u - h\|_{L^{\infty}(B_{1/2})} 
\leq \vep.
\]
\end{lem}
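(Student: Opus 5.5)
Since $F^{\pm}$ and $G$ have constant coefficients, the comparison principle (Theorem~\ref{thm:comp}) applies to the pair $u,h$ in $B_{3/4}$, in both orders. Taking $u$ as subsolution and $h$ as supersolution gives $u-h \in \underline{S}(\lambda,\Lambda,\mu,\theta; f^{+},f^{-},g)$. Taking $h$ as subsolution and $u$ as supersolution gives $h-u\in\underline{S}(\lambda,\Lambda,\mu,\theta;-f^{+},-f^{-},-g)$, that is, $u-h\in\overline{S}(\lambda,\Lambda,\mu,\theta;f^{+},f^{-},g)$. Hence $w:=u-h\in S(\lambda,\Lambda,\mu,\theta;f^{+},f^{-},g)$ in $B_{3/4}$ and $w=0$ on $\partial B_{3/4}$. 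If $w$ is continuous up to $\partial B_{3/4}$, the ABP estimate (Theorem~\ref{thm:abp}) applied to $w$ and $-w$ yields
\[
\|u-h\|_{L^\infty(B_{3/4})}\le C\big(\|f^{+}\|_{L^n(B_1^{+})}+\|f^{-}\|_{L^n(B_1^{-})}+\|g\|_{L^\infty(T_1)}\big)\le C\delta,
\]
and $\delta=\vep/C$ concludes. This choice of $\delta$ is independent of $\vep$-dependent compactness, so the estimate holds even in the stronger form on all of $B_{3/4}$.

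The point requiring care, and the main obstacle, is the boundary regularity needed to apply ABP on $\overline{B}_{3/4}$. Here $u$ is continuous on $\overline{B}_{3/4}\subset B_1$, so $\phi=u|_{\partial B_{3/4}}$ is continuous. By Theorem~\ref{thm:wellposed} (rescaled to $B_{3/4}$), $h\in C(\overline{B}_{3/4})$ with $h=\phi$ on the boundary. Thus $w\in C(\overline{B}_{3/4})$ with $w=0$ on $\partial B_{3/4}$, and no equicontinuity is needed for the qualitative statement. I should also check that Theorem~\ref{thm:comp} is stated for solutions in the open ball $B_R$, which holds here with $R=3/4$.

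If instead one wants the conclusion uniformly without appealing to continuity of the specific $h$, for instance because the well-posedness is only available in $B_1$ after scaling, I would run a contradiction--compactness argument as a backup. Suppose there are sequences $u_k,f^{\pm}_k,g_k$ with $\delta_k\to0$ but $\|u_k-h_k\|_{L^\infty(B_{1/2})}>\vep$. Corollary~\ref{cor:trueint} gives uniform $C^\alpha$ bounds on $u_k$ in $\overline{B}_{3/4}$. Theorem~\ref{thm:barriers} applied to $h_k$, which lies in $S(\lambda,\Lambda,\mu,\theta;-F^{+}(0),\dots)$ with equicontinuous boundary data, gives a uniform modulus for $h_k$. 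Arzel\`a--Ascoli then produces uniform limits, and the ABP bound above still forces $u_k-h_k\to0$, which is a contradiction. The direct ABP route should suffice, however, so I would present that first.
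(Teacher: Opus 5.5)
Your proof is correct, and it is more direct than the paper's.

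The paper takes a longer route:
\begin{enumerate}
\item It gets a H\"older modulus for $u$ on $\overline{B}_{3/4}$ from the interior $C^{\alpha}$ estimate, and a modulus $\omega^{\star}_{\delta}$ for $h$ from the equicontinuity result, Theorem~\ref{thm:barriers}.
\item It uses these moduli to bound $|u-h|$ on the smaller sphere $\partial B_{3/4-\eta}$ by $\omega(\eta)+\omega^{\star}_{\delta}(\eta)$.
\item Only then does it apply comparison and ABP in $B_{3/4-\eta}$, choosing $\delta$ first and $\eta$ second.
\end{enumerate}

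You instead note that $u-h\in C(\overline{B}_{3/4})$ already vanishes on $\partial B_{3/4}$. Applying Theorem~\ref{thm:comp} in both orders and Theorem~\ref{thm:abp} to $\pm(u-h)$ on $B_{3/4}$ itself gives $\|u-h\|_{L^{\infty}(B_{3/4})}\le C\delta$. Letting $\eta\downarrow 0$ in the paper's final inequality yields exactly this bound, so the boundary moduli add nothing here.

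Your route has several advantages:
\begin{itemize}
\item It gives a linear choice $\delta=\vep/C$.
\item It gives the estimate on all of $B_{3/4}$, not just $B_{1/2}$.
\item It does not need Theorem~\ref{thm:barriers}.
\item It does not need the normalization $F^{\pm}(0)=G(0,0)=0$. The paper's proof tacitly relies on that normalization, inherited from the setting of Section~\ref{sec:inhom}, to make the moduli of $u$ and $h$ universal, but it is not in the lemma's statement.
\end{itemize}

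The compactness ``backup'' is superfluous and you can drop it. As written, it finishes by invoking the same ABP bound that already proves the lemma.
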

\begin{proof}
By the interior $C^{\alpha}$ estimate
\[
[u]_{C^{\alpha}(\overline{B}_{3/4})} \leq C (1 + \delta),
\]
hence $u$ has a H\"{o}lder modulus of continuity $\omega(r) := C(1+\delta) r^{\alpha}$ on $\overline{B}_{3/4}$,
with $\alpha$ and $C$ universal.
By Theorem~\ref{thm:barriers}, the function $h$ has a modulus $\omega_{\delta}^{\star}$ on $\overline{B}_{3/4}$
depending only on $n$, $\lambda$, $\Lambda$, $\mu$, and $\delta$.
Since $u - h = 0$ on $\partial B_{3/4}$, 
for $0 < \eta < 1/4$,
by the boundary moduli we have that
\[
\|u - h\|_{L^{\infty}(\partial B_{3/4 - \eta})}
\leq \omega(\eta) + \omega^{\star}_\delta(\eta).
\]
Moreover, by Theorem~\ref{thm:comp}, the difference $u-h$ belongs to the solution class $S(\lambda, \Lambda, \mu, \theta; f^{+}, f^{-}, g)$ in $B_{3/4}$, whence by the ABP maximum principle
\[
\|u -h\|_{L^{\infty}(B_{3/4-\eta})}
\leq \|u -h\|_{L^{\infty}(\partial B_{3/4-\eta})} + C \delta
\leq \omega(\eta)+ \omega_{\delta}^{\star}(\eta) + C \delta.
\]
Therefore, first choosing $\delta >0$ small such that $C \delta \leq \vep/2$ and then choosing $\eta > 0$ small such that $\omega(\eta) + \omega_{\delta}(\eta) \leq \vep/2$, the claim follows.
\end{proof}

\medskip

%%%%%%%%%%%%%%%%%%%%%%%%%%%%%%%%%%%%%%%%%

\section{Perturbation theory I: constant coefficients}
\label{sec:inhom}

In this section, we extend the piecewise $C^{1,\overline{\alpha}}$ regularity of the constant-coefficient homogeneous problem (Theorem~\ref{thm:piecewise}) to the inhomogeneous setting.
This auxiliary result is crucial for the regularity theory of variable-coefficient problems.

\medskip

Recall the definition of the $C^{\alpha-1}$-Morrey norm~\eqref{morrey} in the Introduction.
Our efforts are devoted to proving the following theorem:

\begin{thm}
[Piecewise $C^{1,\alpha}$ regularity for constant-coefficient inhomogeneous problems]
\label{thm:const}
Let $\{F^{\pm}, G\}$ be constant-coefficient operators with ellipticity constants $0 < \lambda \leq \Lambda$ and $\mu \geq 0$, and assume that $F^{\pm}(0) = G(0,0) = 0$.
Let $0 < \alpha < \overline{\alpha}$, where $\overline{\alpha}$ is the universal exponent in the conclusion of Theorem~\ref{thm:piecewise}.
Let $f^{\pm}$ be continuous functions on $B_1^{\pm} \cup T_1$, let $g$ be a $C^{\alpha}$ function in $T_1$, and let $\theta \in [0,1]$ be a constant.
Let $u \in C(B_1)$ be a viscosity solution to the transmission problem
\[
\begin{cases}
F^{\pm}(D^2 u) = f^{\pm}(x) & \text{ in } B_{1}^{\pm},\\
G(\nabla u^{+}, \theta \nabla u^{-}) = g(x) &\text{ on } T_{1}.
\end{cases}
\]

Then $u^{\pm}$ are $C^{1,\alpha}$ on $B_1^{\pm} \cup T_1$, and moreover
\[
\|u^{\pm}\|_{C^{1,\alpha}(\overline{B}_{1/2}^{\pm})} \leq C \left( \|u\|_{L^{\infty}(B_1)} 
+ \|f^{+}\|_{C^{\alpha-1}(B_1^{+})} 
+ \|f^{-}\|_{C^{\alpha-1}(B_1^{-})} 
+ \|g\|_{C^{\alpha}(\overline{T}_1)}
\right)
\]
for some constant $C > 0$ depending only on $n$, $\lambda$, $\Lambda$, $\mu$, $\alpha$, and $\overline{\alpha}$.
\end{thm}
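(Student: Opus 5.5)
We follow Caffarelli's perturbation scheme, approximating $u$ at each scale by solutions of the homogeneous constant-coefficient problem, whose piecewise $C^{1,\overline{\alpha}}$ regularity is Theorem~\ref{thm:piecewise}.

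\emph{Normalization.} By the scaling $u_r(x)=u(x_0+rx)/r^{1+\alpha}$ and division by the right-hand side, we may assume $\|u\|_{L^\infty(B_1)}\le 1$, $\|f^{\pm}\|_{C^{\alpha-1}}\le\delta$ and $[g]_{C^{\alpha}}\le\delta$, with $\delta$ small and universal. The Morrey norm \eqref{morrey} is designed so that $\|f^{\pm}\|_{L^n(B_r)}\le\delta r^{\alpha}$ at every center and scale. This is exactly what survives the rescaling, since the bulk source of $u_r$ is $r^{1-\alpha}f(x_0+r\cdot)$, whose $L^n(B_1)$ norm is $r^{-\alpha}\|f\|_{L^n(B_r)}$. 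Interior points are handled by the classical $C^{1,\alpha}$ theory for $F(D^2u)=f$ with $C^{\alpha-1}$-Morrey data, so the work is at points $x_0\in T_{1/2}$.

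\emph{Key step.} We show there exist universal $\rho\in(0,1)$ and $\delta>0$, and a piecewise linear $\ell$ (continuous across $T$, with $\ell^{\pm}(x)=a+b^{\pm}\cdot x$ and $G(b^+,\theta b^-)=g(0)$), satisfying $|b^\pm|\le C$ and $\|u-\ell\|_{L^\infty(B_\rho)}\le\rho^{1+\alpha}$.

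First subtract $g(0)$: replace $G$ by $\widetilde G(\xi^+,\xi^-)=G(\xi^+,\xi^-)-g(0)$. Then $\widetilde G(0,0)=-g(0)$, which is not small in general. This is harmless because Theorem~\ref{thm:piecewise} already allows $G(0,0)\ne0$ in its estimate.

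Next let $h$ solve the homogeneous problem $F^\pm(D^2h)=0$, $\widetilde G(\nabla h^+,\theta\nabla h^-)=0$ in $B_{3/4}$ with $h=u$ on $\partial B_{3/4}$ (Theorem~\ref{thm:wellposed}). Lemma~\ref{lem:silly}, applied with $\widetilde G$, gives $\|u-h\|_{L^\infty(B_{1/2})}\le\vep$. Its proof goes through with the remaining data $g-g(0)$, which is $O(\delta)$ in $L^\infty(T_1)$, and $L^n$ norms of $f^\pm$ that are $O(\delta)$.

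Theorem~\ref{thm:piecewise} then gives $h$ piecewise $C^{1,\overline\alpha}$, with norm bounded by $C(1+|g(0)|)$, and $h$ satisfies the transmission condition classically. Take $\ell$ to be its piecewise linear expansion at $0$; it is continuous across $T$ because $h$ is. We get
\[
|h-\ell|\le C\rho^{1+\overline\alpha}\quad\text{in }B_\rho .
\]
Since $\alpha<\overline\alpha$, we can choose $\rho$ with $C\rho^{1+\overline\alpha}\le\frac12\rho^{1+\alpha}$, and then $\vep=\frac12\rho^{1+\alpha}$, which fixes $\delta$.

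\emph{Iteration.} The main obstacle is that $u-\ell$ does not solve a problem of the same form with the same operators. Its bulk equation becomes $F^\pm(D^2 w)=f^\pm$, which is unchanged because $\ell$ is piecewise linear. Its transmission operator becomes $G_\ell(\xi^+,\xi^-)=G(\xi^++b^+,\xi^-+\theta b^-)$. This is still uniformly elliptic with the same constants, and $G_\ell(0,0)=G(b^+,\theta b^-)=g(0)$. Hence in the rescaled problem for
\[
u_1(x)=\frac{(u-\ell)(\rho x)}{\rho^{1+\alpha}}
\]
the transmission condition reads $\rho^{-\alpha}G_\ell(\rho^{\alpha}\nabla u_1^+,\rho^\alpha\theta\nabla u_1^-)=\rho^{-\alpha}g(\rho x)$. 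After again subtracting the value at the origin, the rescaled operator is uniformly elliptic with the same $\lambda,\Lambda,\mu$ and vanishes at $(0,0)$. The new datum $\rho^{-\alpha}(g(\rho x)-g(0))$ has $C^\alpha$ seminorm at most $\delta$ and is bounded by $\delta$ on $T_1$.

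So the class of problems is invariant: constant-coefficient, uniformly elliptic, with the operator normalized to vanish at $0$. The estimates of Theorem~\ref{thm:piecewise} and Lemma~\ref{lem:silly} depend only on the ellipticity constants, so the key step applies uniformly. Iterating yields piecewise linear $\ell_k$ with
\[
\|u-\ell_k\|_{L^\infty(B_{\rho^k})}\le\rho^{k(1+\alpha)},\qquad |b_{k+1}^\pm-b_k^\pm|\le C\rho^{k\alpha},
\]
so $\nabla\ell_k^\pm$ converges geometrically. The remaining care is to check that the same $\ell_k$ stays continuous across $T$, i.e.\ that the tangential parts of $b_k^+$ and $b_k^-$ agree; this is automatic because each increment comes from a function continuous across $T$.

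\emph{Conclusion.} This gives piecewise $C^{1,\alpha}$ approximation at every point of $T_{1/2}$. Combining it with interior $C^{1,\alpha}$ estimates, applied at distance $|x_n|$ from the interface, by the standard gluing argument yields the estimate for $\|u^\pm\|_{C^{1,\alpha}(\overline B^\pm_{1/2})}$.
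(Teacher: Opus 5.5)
Your proposal is correct and follows the paper's argument. Both use a Caffarelli-type iteration: approximate by the homogeneous constant-coefficient replacement via Lemma~\ref{lem:silly}, use Theorem~\ref{thm:piecewise} for that replacement (the $C^{1,\overline{\alpha}}$ bound and the classical transmission condition), and rescale with $G$ recentered at the accumulated gradients so that $G_k(0,0)=0$. The only cosmetic difference is how $g(0)$ is handled: the paper subtracts $t x_n^{+}$ with $G(te_n,0)=g(0)$ to get $g(0)=0$ from the start, whereas you let $\widetilde G(0,0)=-g(0)$ in the first step be absorbed by the $|G(0,0)|$ term in Theorem~\ref{thm:piecewise}, after which $g_k(0)=0$ automatically.
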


\begin{rem}
Here, notice that there is no loss in assuming that $F^{\pm}(0) = 0$ and $G(0,0) = 0$, since otherwise we may replace the functions $f^{\pm}$ and $g$ by $f^{\pm} - F^{\pm}(0)$ and $g - G(0,0)$, respectively.
\end{rem}

\medskip

The $C^{1,\alpha}$ estimate away from the interface is classical (see Section 8.2 in~\cite{CC}), hence by a standard scaling and covering argument it suffices to prove a piecewise $C^{1,\alpha}$ bound at a single point on the interface, namely, the origin.
To this aim, we make several simplifying assumptions.

\medskip

It is convenient to give the pointwise estimate in terms of more accurate \emph{punctual} norms of the sources.
We define the punctual $C^{\alpha-1}$ norm of $f^{\pm} \in C(B_R^{\pm} \cup T_R)$ at $x \in T_R$ by
\begin{equation}
\label{point:morrey}
\|f^{\pm}\|_{C_{[x]}^{\alpha-1}(B^{\pm}_R)} := \sup_{B_r(x) \subset B_R} r^{-\alpha} \|f^{\pm}\|_{L^{n}(B^{\pm}_r(x))}.
\end{equation}
Similarly, for $g \in C(T_R)$ and $x \in T_R$, we consider the punctual H\"{o}lder seminorm
\begin{equation}
\label{point:holder}
[g]_{C^{\alpha}_{[x]}(T_R)} := \sup_{y \in T_R, \, y \neq x} \frac{|g(x) - g(y)|}{|x-y|^{\alpha}}.
\end{equation}

\medskip

By uniform ellipticity of $G$, recalling that $G(0, 0) = 0$, we can find a unique $t \in \R$ such that $G(t e_n, 0) = g(0)$ and satisfying the estimate
\[
|t| \leq \frac{1}{\lambda} |g(0)|.
\]
Therefore, replacing the functions
\[
u(x), \qquad\qquad G(\xi^{+}, \xi^{-}), \qquad \qquad g(x),
\]
by
\[
u(x)- t x_n^{+}, \qquad G(\xi^{+} + t e_n, \xi^{-}) - g(0), \qquad g(x) - g(0),
\]
respectively, we may additionally assume that $g(0) = 0$.

\medskip

Let $\delta > 0$.
Rescaling $u$ by 
\[
\widetilde{u}(x) = \frac{u(x)}{\|u\|_{L^{\infty}(B_1)} 
+ \delta^{-1} 
\Big( 
\|f^{+}\|_{C_{[0]}^{\alpha-1}(B_1^{+})} + \|f^{-}\|_{C_{[0]}^{\alpha-1}(B_1^{-})} 
+ [g]_{C_{[0]}^{\alpha}(T_1)}
\Big) 
} =: \frac{u(x)}{K},
\]
replacing the operators $\{F^{\pm}, G\}$ by
\[
\widetilde{F}^{\pm}(M) = \frac{1}{K} F(K M), \qquad \widetilde{G}(\xi^{+}, \xi^{-}) = \frac{1}{K} G(K \xi^{+}, K \xi^{-}),
\]
and the source terms by
\[
\widetilde{f}^{\pm}(x) = \frac{1}{K} f^{\pm}(x), \qquad \widetilde{g}(x) = \frac{1}{K} g(x),
\]
we may further assume that
$\|u\|_{L^{\infty}(B_1)} \leq 1$ and
\[
\|f^{\pm}\|_{L^{n}(B_r^{\pm})} \leq \delta r^{\alpha},
\qquad 
\|g\|_{L^{\infty}(T_{r})} \leq \delta r^{\alpha}
\qquad \text{ for } r \leq 1.
\]
Here, note that $\{\widetilde{F}^{\pm}, \widetilde{G}\}$ have the same ellipticity constants as $\{F^{\pm}, G\}$.

\medskip

We state the following variant of the piecewise $C^{1,\overline{\alpha}}$ estimate (that can be easily obtained from Theorem~\ref{thm:piecewise} by a scaling and covering argument) for reference below:
\begin{itemize}
\item
If $w \in C(B_{3/4})$ is a viscosity solution to the constant-coefficient transmission problem
\[
\begin{cases}
F^{\pm}(D^2 w) = 0 & \text{ in } B_{3/4}^{\pm},\\
G(\nabla w^{+}, \theta \nabla w^{-}) = 0 & \text{ on } T_{3/4},
\end{cases}
\]
with $F^{\pm}(0) = G(0, 0) = 0$,
then $w$ is piecewise $C^{1,\overline{\alpha}}$ with an estimate
\begin{equation}
\label{fund2}
\|w^{\pm}\|_{C^{1,\overline{\alpha}}(\overline{B}^{\pm}_{1/2})} 
\leq C_0 \|w\|_{L^{\infty}(B_{3/4})},
\end{equation}
where $C_0 > 0$ is a universal constant.
\end{itemize}

\medskip

From the discussion above, it remains to show the following:

\begin{thm}
Let $\{F^{\pm}, G\}$ be constant-coefficient operators with ellipticity constants $0 < \lambda \leq \Lambda$ and $\mu \geq 0$, let $f^{\pm} \in C(B_1^{\pm} \cup T_1)$, let $g \in C(T_1)$, and let $\theta \in [0,1]$.
Let $u \in C(B_1)$ be a viscosity solution to 
\[
\begin{cases}
F^{\pm}(D^2 u) = f^{\pm}(x) & \text{ in } B_{1}^{\pm},\\
G(\nabla u^{+}, \theta \nabla u^{-}) = g(x) &\text{ on } T_{1}.
\end{cases}
\]
Assume that 
\begin{equation}
\label{ao1}
F^{\pm}(0) = 0, \qquad G(0,0) = 0, \qquad g(0) = 0, \qquad \text{and} \qquad \|u\|_{L^{\infty}(B_1)} \leq 1.
\end{equation}

For each $0 < \alpha < \overline{\alpha}$, there exists $\delta > 0$ depending only on $n$, $\lambda$, $\Lambda$, $\mu$, $\alpha$, and $\overline{\alpha}$ such that if
\begin{equation}
\label{ao2}
\|f^{\pm}\|_{L^n(B_r^{\pm})} \leq \delta r^{\alpha}
\qquad \text{ and }
\qquad \|g\|_{L^{\infty}(T_r)} \leq \delta r^{\alpha}
\qquad \text{ for all } r \leq 1,
\end{equation}
then $u$ is piecewise $C^{1,\alpha}$ at the origin, i.e., there is a piecewise linear function
\[
\ell(x) = a^{+} \cdot x \, \chara{\{x_n \geq 0\}} + a^{-} \cdot x \, \chara{\{x_n < 0\}} + u(0),
\]
with $a^{\pm} \in \R^n$, such that
\[
|a^{\pm}| \leq C,
\]
\[
G(a^{+}, \theta a^{-}) = 0,
\]
\[
\|u - \ell\|_{L^{\infty}(B_r)} \leq 
Cr^{1 +\alpha}
\quad \text{ for all } r \leq 1,
\]
for some constant $C > 0$ depending only on $n$, $\lambda$, $\Lambda$, $\mu$, $\alpha$, and $\overline{\alpha}$.
\end{thm}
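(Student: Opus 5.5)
The plan is a Caffarelli-type iteration: build a sequence of piecewise linear functions $\ell_k(x) = a_k^{+}\cdot x\,\chara{\{x_n\ge0\}} + a_k^{-}\cdot x\,\chara{\{x_n<0\}} + u(0)$ with $G(a_k^{+},\theta a_k^{-})=0$ such that
\[
\|u-\ell_k\|_{L^\infty(B_{\rho^k})}\le \rho^{k(1+\alpha)}, \qquad |a_{k+1}^{\pm}-a_k^{\pm}|\le C\rho^{k\alpha},
\]
for a universal small $\rho$. Summing the geometric series gives the limit $a^{\pm}$, with $|a^{\pm}|\le C$ and $G(a^{+},\theta a^{-})=0$ by continuity of $G$. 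The estimate for intermediate radii then follows in the standard way.

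\textbf{Base step.} Take $\ell_0\equiv u(0)$, which is admissible since $G(0,0)=0$ and $\|u\|_{L^\infty}\le1$.

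\textbf{First step (one-step improvement).} Given $\|u\|_{L^\infty(B_1)}\le1$ and \eqref{ao2}, apply Lemma~\ref{lem:silly} to obtain the homogeneous replacement $h$ in $B_{3/4}$ with $\|u-h\|_{L^\infty(B_{1/2})}\le\vep$. By \eqref{fund2} (whose hypotheses hold since $F^{\pm}(0)=G(0,0)=0$ and $\|h\|\le C$ by ABP), $h$ is piecewise $C^{1,\overline{\alpha}}$. Moreover $h$ satisfies the transmission condition classically at $0$ by Theorem~\ref{thm:piecewise}. So $\bar\ell(x)=h(0)+\nabla h^{\pm}(0)\cdot x$ satisfies $G(\nabla h^{+}(0),\theta\nabla h^{-}(0))=0$ and $|h-\bar\ell|\le C_0 r^{1+\overline{\alpha}}$ in $B_r$. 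Replacing $h(0)$ by $u(0)$ costs at most $\vep$. Hence
\[
\|u-\bar\ell\|_{L^\infty(B_\rho)}\le C_0\rho^{1+\overline\alpha}+2\vep\le\rho^{1+\alpha},
\]
after first choosing $\rho$ with $C_0\rho^{\overline\alpha-\alpha}\le1/2$ (this is where $\alpha<\overline\alpha$ is used), and then $\vep$ and hence $\delta$.

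\textbf{Induction.} Rescale
\[
u_k(x)=\frac{(u-\ell_k)(\rho^k x)}{\rho^{k(1+\alpha)}}.
\]
The key point is that subtracting $\ell_k$ changes the problem. In the bulk, $\ell_k$ has zero Hessian, so the bulk operators are unchanged after the scaling $\rho^{-k(\alpha-1)}F^{\pm}(\rho^{k(\alpha-1)}\cdot)$, which keeps the ellipticity constants and $F^{\pm}(0)=0$; the new sources $\rho^{k(1-\alpha)}f^{\pm}(\rho^k\cdot)$ have $L^n(B_r)$ norm at most $\delta r^\alpha$ by the Morrey condition. On the interface, the operator becomes
\[
\widetilde G(\xi^{+},\xi^{-})=\rho^{-k\alpha}G(a_k^{+}+\rho^{k\alpha}\xi^{+},\theta a_k^{-}+\rho^{k\alpha}\xi^{-}).
\]
This is again constant-coefficient, has the same ellipticity constants, and satisfies $\widetilde G(0,0)=0$ precisely because $G(a_k^{+},\theta a_k^{-})=0$. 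The new interface source is $\rho^{-k\alpha}g(\rho^k x)$, bounded by $\delta r^{\alpha}$ on $T_r$. Here one must check that $\theta$ enters correctly: the scaling acts on $\theta\nabla u^{-}$ as a whole, so writing $\widetilde G$ with the $\theta$-shift inside is consistent with Definition~\ref{def:visc}. Then $u_k$ satisfies all the hypotheses, and the first step applied to $u_k$ yields $\ell_{k+1}$. The condition $G(a_{k+1}^{+},\theta a_{k+1}^{-})=0$ is inherited from $\widetilde G(\bar a^{+},\theta\bar a^{-})=0$.

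\textbf{Main obstacle.} The delicate point is the first step: the compactness/approximation must be uniform over all constant-coefficient operators with the given ellipticity, and the linear part of the replacement must exactly satisfy the transmission condition, rather than only approximately. Both are supplied by Lemma~\ref{lem:silly} (whose $\delta$ depends only on universal quantities) and by the classical-sense conclusion of Theorem~\ref{thm:piecewise}. Care is also needed with the $\theta\nabla u^{-}$ convention when $\theta=0$: then $a^{-}$ is unconstrained by $G$ but still bounded by \eqref{fund2}.
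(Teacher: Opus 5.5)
Your proposal is correct and follows essentially the same route as the paper: the Caffarelli iteration with the rescaled interface operator $\rho^{-k\alpha}G(a_k^{+}+\rho^{k\alpha}\xi^{+},\theta a_k^{-}+\rho^{k\alpha}\xi^{-})$, Lemma~\ref{lem:silly} for the approximation, \eqref{fund2} for the replacement, and the classical transmission condition from Theorem~\ref{thm:piecewise} to preserve $G(a_k^{+},\theta a_k^{-})=0$. The only difference is that you freeze the constant term at $u(0)$ rather than letting the paper's $c_k$ accumulate from $\ell_0=0$; this requires one small fix, since $\ell_0\equiv u(0)$ only gives $\|u-\ell_0\|_{L^\infty(B_1)}\le 2$, so the $k=0$ step should be applied to $u$ itself (as your ``first step'' does) or after dividing by $2$.
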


\begin{proof}
We will construct a sequence of piecewise linear functions
\begin{equation}
\label{con0}
\ell_k(x) = a_k^{+} \cdot x \, \chara{\{x_n \geq 0\}} + a_k^{-} \cdot x \, \chara{\{x_n < 0\}} + c_k,
\end{equation}
with $a_k^{\pm} \in \R^n$ and $c_k \in \R$ such that, for all $k \geq 0$
\begin{equation}
\label{con1}
G(a_k^{+}, \theta a_k^{-}) = 0,
\end{equation}
\begin{equation}
\label{con2}
\|u - \ell_k\|_{L^{\infty}(B_{\rho^{k}})} \leq \rho^{k(1+\alpha)},
\end{equation}
\begin{equation}
\label{con3}
|a_k^{\pm} - a_{k-1}^{\pm}| \leq C_0 \rho^{(k-1)\alpha}, \qquad |c_k - c_{k-1}| \leq C_0 \rho^{(k-1)(1+\alpha)},
\end{equation}
where $0 < \rho < 1$ is a constant depending only on $n$, $\lambda$, $\Lambda$, $\mu$, $\alpha$, and $\overline{\alpha}$.
It is then standard to verify that $\ell = \lim_{k \to \infty} \ell_k$ satisfies the properties in the statement.

\medskip

Let $\ell_{-1} = \ell_0 = 0$ and note that the above equations hold for $k = 0$.
Indeed,~\eqref{con1} and~\eqref{con2} follow from~\eqref{ao1}, while~\eqref{con3} is obvious.

\medskip

Assume by induction that we have constructed $\ell_k$ as above satisfying~\eqref{con1},~\eqref{con2}, and~\eqref{con3}, for some $k \geq 0$.
Let $v$ be the function
\[
v(x) := \frac{(u - \ell_k)(\rho^{k} x)}{\rho^{k(1+\alpha)}}
\]
and observe that it is a viscosity solution to the transmission problem
\begin{equation}
\label{apro1}
\begin{cases}
F_k^{\pm}(D^2 v) = f^{\pm}_k(x) & \text{ in } B_1^{\pm},\\
G_k(\nabla v^{+},\theta \nabla v^{-})
= g_k(x) & \text{ on } T_1,
\end{cases}
\end{equation}
with $F_k^{\pm}$, $f_k^{\pm}$, $G_k$, and $g_k$ given by
\[
F_k^{\pm}(M) := \rho^{k (1-\alpha)} F(\rho^{-k (1-\alpha)}M),
\qquad \qquad 
f^{\pm}_k(x) := \rho^{k (1-\alpha)} f^{\pm}(\rho^{k} x),
\]
\[
G_k(\xi^{+}, \xi^{-}) := \rho^{-k \alpha}G(a_k^{+} + \rho^{k\alpha} \xi^{+}, \theta a_k^{-} + \rho^{k\alpha} \xi^{-}),
\qquad \qquad
g_k(x) := \rho^{-k \alpha} g(\rho^{k} x).
\]

\medskip

The operators $\{F_{k}^{\pm}, G_k\}$ have ellipticity constants $0 < \lambda \leq \Lambda$ and $\mu \geq 0$.
By~\eqref{ao1} and the induction hypotheses~\eqref{con1} and~\eqref{con2}, we have that
\begin{equation}
\label{oa1}
F_{k}^{\pm}(0) = 0, \qquad G_{k}(0,0) = 0, \qquad g_{k}(0) = 0, \qquad 
\text{and} \qquad 
\|v\|_{L^{\infty}(B_1)} \leq 1.
\end{equation}
Moreover, by~\eqref{ao2} the source terms are small, with
\begin{equation}
\label{oa2}
\|f_k^{\pm}\|_{L^{n}(B_1^{\pm})} \leq \delta \qquad \text{ and } \qquad \|g_k\|_{L^{\infty}(T_1)} \leq \delta.
\end{equation}

\medskip

Let $w$ be the viscosity solution (whose existence and uniqueness is guaranteed by Theorem~\ref{thm:wellposed}) to the transmission problem
\begin{equation}
\label{apro2}
\begin{cases}
F^{\pm}_k(D^2 w ) = 0 & \text{ in } B_{3/4}^{\pm},\\
G_k(\nabla w^{+}, \theta \nabla w^{-}) = 0 & \text{ on } T_{3/4},\\
w = v & \text{ on } \partial B_{3/4}.
\end{cases}
\end{equation}
By~\eqref{fund2} and~\eqref{oa1}, the function $w$ is piecewise $C^{1,\overline{\alpha}}$ and satisfies the estimate
\begin{equation}
\label{hest}
\|w^{\pm}\|_{C^{1,\overline{\alpha}}(\overline{B}^{\pm}_{1/2})} 
\leq C_0 \|w\|_{L^{\infty}(B_{3/4})}
\leq C_0 \|v\|_{L^{\infty}(\partial B_{3/4})} \leq C_0,
\end{equation}
where we have used the ABP estimate for~\eqref{apro2} and that $\|v\|_{L^{\infty}(\partial B_{3/4})} \leq 1$.
In particular, $w$ has a piecewise linear Taylor approximation $\overline{\ell}$ at $0$ given on each side by
\[
\overline{\ell}^{\pm}(x) := 
\nabla w^{\pm}(0) \cdot x + w(0)
\]
and from~\eqref{hest} we see that
\begin{equation}
\label{hgood}
\|w - \overline{\ell}\|_{L^{\infty}(B_{\rho})} \leq C_0 \rho^{1+\overline{\alpha}} \leq \frac{1}{2} \rho^{1+\alpha}
\end{equation}
by choosing $0 < \rho < 1/2$ small depending only on $C_0$ (universal) and $\alpha < \overline{\alpha}$.
Applying Lemma~\ref{lem:silly} to~\eqref{apro1} and~\eqref{apro2},
we deduce the existence of a constant
$\delta > 0$ 
depending only on $n$, $\lambda$, $\Lambda$, $\mu$, $\alpha$, and $\overline{\alpha}$ such that when taken in~\eqref{oa2} yields
\begin{equation}
\label{vgood}
\|v - w\|_{L^{\infty}(B_{1/2})} \leq \frac{1}{2}\rho^{1+\alpha}.
\end{equation}
Combining~\eqref{hgood} and~\eqref{vgood}, we conclude
\begin{equation}
\label{algo}
\|v - \overline{\ell}\|_{L^{\infty}(B_{\rho})} \leq \|v - w\|_{L^{\infty}(B_{\rho})} + \|w - \overline{\ell}\|_{L^{\infty}(B_{\rho})} \leq \rho^{1+\alpha}.
\end{equation}

\medskip

We define the $(k+1)$-th polynomial in~\eqref{con0} by
\[
\ell_{k+1}(x) := \ell_k(x) + \rho^{k(1+\alpha)} \overline{\ell}(\rho^{-k} x)
\]
or equivalently, with coefficients
\[
a_{k+1}^{\pm} := a_k^{\pm} + \rho^{k\alpha} \nabla w^{\pm}(0) \qquad \text{ and } \qquad c_{k+1} := c_k + \rho^{k(1+\alpha)} w(0).
\]
To conclude the proof, we must check that~\eqref{con1},~\eqref{con2}, and~\eqref{con3} hold for $k+1$.

\medskip

Since $w$ satisfies the transmission condition from~\eqref{apro2} in the classical sense, we have that
\[
G_k(\nabla w_{k}^{+}(0), \theta \nabla w_{k}^{-}(0)) = 0,
\]
which is equivalent to~\eqref{con1} by definition of $G_k$ and $a_{k+1}^{\pm}$.
From~\eqref{algo} we deduce~\eqref{con2} as
\[
\|u - \ell_{k+1}\|_{L^{\infty}(B_{\rho^{k+1}})} = \rho^{k(1+\alpha)}\|v - \overline{\ell}\|_{L^{\infty}(B_{\rho})} \leq \rho^{(k+1)(1+\alpha)}.
\]
Finally, from~\eqref{hest} we see that
\[
|a_{k+1}^{\pm} - a_{k}^{\pm}| = \rho^{k \alpha} |\nabla w^{\pm}(0)| \leq C_0 \rho^{k\alpha}
\qquad \text{ and } \qquad |c_{k+1} - c_{k}| = \rho^{k (1+\alpha)} |w(0)| \leq C_0 \rho^{k(1+\alpha)},
\]
whence~\eqref{con3} follows.
\end{proof}

\medskip

%%%%%%%%%%%%%%%%%%%%%%%%%%%%%%%%%%%%%%%%%

\section{Perturbation theory II: variable coefficients}
\label{sec:pert}

In this section we prove Theorem~\ref{thm:main}, the piecewise $C^{1,\alpha}$ regularity of solutions to variable-coefficient transmission problems.
We do this by perturbation from constant-coefficient equations, for which optimal regularity is already known.
As explained in the Introduction, for this argument to work, we need a stronger notion of viscosity solution.

\medskip

First, we introduce a more general class of test functions whose Hessians from either side may blow up at the interface:

\begin{defn}
[Admissible* function]
The function $\varphi \in C(\Omega)$ is 
\emph{admissible*}
if its restrictions $\varphi^{\pm}$ are $C^2$ in the interior and $C^{1}$ up to the interface, i.e., if
\[
\varphi^{\pm} \in C^2(\Omega^{\pm}) \cap C^{1}(\Omega^{\pm} \cup T).
\]
\end{defn}

This class yields the following more restrictive notion of viscosity solution:

\begin{defn}[Viscosity* solution]
\label{def:viscstar}
We say that $u \in C(\Omega)$ is a \emph{viscosity*} \emph{subsolution} to~\eqref{eq:nl} if whenever an admissible* function $\varphi$ touches $u$ from above at $x_0 \in \Omega$ and:
\begin{itemize}
\item $x_0 \in \Omega^{\pm}$, then
\[
F^{\pm}(D^2 \varphi(x_0), x_0) \geq f^{\pm}(x_0);
\]
\item $x_0 \in T$, then
\[
G(\nabla \varphi^{+}(x_0), \theta \nabla \varphi^{-}(x_0), x_0) \geq g(x_0).
\]
\end{itemize}
We have an analogous definition of viscosity* supersolution.
A viscosity* solution is a function that is both a viscosity* subsolution and supersolution.
\end{defn}

\begin{rem}
In~\cite{E}, we showed that standard viscosity solutions in the sense of Definition~\ref{def:visc} can be equivalently defined with respect to \emph{admissible} test functions $\varphi \in C(\Omega)$ satisfying
\[
\varphi^{\pm} \in C^{2}(\Omega^{\pm}) \cap C^{1,1}(\Omega^{\pm} \cup T),
\]
i.e., whose second derivatives remain bounded up to the interface.
Since the admissible* class is larger than the admissible class, it follows that every viscosity* solution is a viscosity solution.
Our perturbation theory for variable-coefficient equations relies on the use of a non-explicit barrier that we only know to be admissible*, hence our need for a stronger notion of viscosity solution.
\end{rem}

\begin{rem}
We note that sufficiently smooth viscosity solutions are also viscosity* solutions. 
More precisely, if $u \in C(\Omega)$, with $u^{\pm} \in C^{1}(\Omega^{\pm} \cup T)$, is a viscosity solution, then it is also a viscosity* solution.
\end{rem}

\medskip

Let $u \in C(B_1)$ be a viscosity* solution to 
\begin{equation}
\label{calarca}
\begin{cases}
F^{\pm}(D^2 u, x) = f^{\pm}(x) & \text{ in } B_1^{\pm},\\
G(\nabla u^{+}, \theta \nabla u^{-}, x) = g(x) & \text{ on } T_1,
\end{cases}
\end{equation}
with $F^{\pm}$, $G$, $f^{\pm}$, $g$, and $0 \leq \theta \leq 1$ as in the statement of Theorem~\ref{thm:main}.

\medskip

As in Section~\ref{sec:inhom}, the $C^{1,\alpha}$ estimate away from the interface is classical, hence by a standard scaling and covering argument it suffices to prove a pointwise bound at the origin.
Once this is established, the fact that the transmission condition at that point is satisfied in the classical sense follows from the same reasoning in Section~\ref{sec:barrier}.

\medskip

To quantify the regularity of the operators with respect to the independent variable $x$, we define the moduli of continuity at $0$ for the operators $F^{\pm}$ and $G$ by
\[
\beta^{\pm}(x) = \beta_{F^{\pm}}(x) := \sup_{M \in \scal_{n}} \frac{|F^{\pm}(M, x) - F^{\pm}(M,0)|}{\|M\| + 1}
\qquad \text{ for } x \in B_1^{\pm},
\]
and
\[
\gamma(x) = \gamma_{G}(x) 
:= \sup_{\xi^{\pm} \in \R^{n}} \frac{|G(\xi^{+}, \xi^{-}, x) - G(\xi^{+}, \xi^{-}, 0)|}{|\xi^{+}| + |\xi^{-}| + 1}
\qquad \text{ for } x \in T_1,
\]
respectively.

\medskip

Our perturbation is based on an approximation lemma allowing us to compare solutions to variable-coefficients equations with their constant-coefficient replacements.
More precisely, we have the following:

\begin{lem}
[Approximation II]
\label{lem:approx}
Let $\{F^{\pm}, G\}$ be operators with ellipticity constants $0 < \lambda \leq \Lambda$ and $\mu \geq 0$.
Let $f^{\pm}$ and $g$ be H\"{o}lder continuous functions on $B_1^{\pm} \cup T_1$ and $T_1$, respectively.
Let $\phi$ be a continuous function on $\partial B_1$, with modulus of continuity $\omega = \omega(r)$.
Let $0 \leq \theta \leq 1$ be a constant.

Assume that $\beta^{\pm}$ and $\gamma$ are H\"{o}lder continuous in $B_1^{\pm} \cup T_1$ and $T_1$, respectively, and that
\[
F^{\pm}(0,x) \, \text{ in } B_1^{\pm}, \qquad G(0,0,x) = 0 \, \text{ on } T_1, \qquad g(0) = 0,
\]
and
\[
\|\phi\|_{L^{\infty}(\partial B_1)} \leq K.
\]

For $\vep > 0$, there is $\delta >0$ depending only on $\vep$, $n$, $\lambda$, $\Lambda$, $\mu$, $\omega$ and $K$, such that if 
\[
\|\beta^{\pm}\|_{L^n(B^{\pm}_1)} \leq \delta, 
\qquad \|f^{\pm}\|_{L^n(B^{\pm}_1)} \leq \delta, 
\qquad \|\gamma\|_{L^{\infty}(T_1)} \leq \delta,
\qquad \text{ and }
\qquad \|g\|_{L^{\infty}(T_1)} \leq \delta, 
\]
then any viscosity* solution $v \in C(\overline{B}_1)$ to the variable-coefficient inhomogeneous problem
\[
\begin{cases}
F^{\pm}(D^2 v, x) = f^{\pm}(x) & \text{ in } B_1^{\pm},\\
G(\nabla v^{+}, \theta \nabla v^{-},x) = g(x) & \text{ on } T_1,\\ 
v = \phi & \text{ on } \partial B_1,
\end{cases}
\]
and any viscosity solution $w \in C(\overline{B}_1)$ to the constant-coefficient homogeneous problem
\[
\begin{cases}
F^{\pm}(D^2 w, 0) = 0 & \text{ in } B_1^{\pm},\\
G(\nabla w^{+}, \theta \nabla w^{-}, 0) = 0 & \text{ on } T_1,\\ 
w = \phi & \text{ on } \partial B_1,
\end{cases}
\]
satisfy
\[
\|v - w\|_{L^{\infty}(B_1)} \leq \vep.
\]
\end{lem}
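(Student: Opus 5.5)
We argue by contradiction and compactness. Suppose the lemma fails for some $\vep_0>0$. Then there are sequences of operators $\{F_k^{\pm},G_k\}$ with the given ellipticity constants, data $f_k^{\pm},g_k$, boundary values $\phi_k$ with common modulus $\omega$ and $\|\phi_k\|_{L^\infty}\le K$, and solutions $v_k,w_k$ with
\[
\|\beta_k^{\pm}\|_{L^n}+\|f_k^{\pm}\|_{L^n}+\|\gamma_k\|_{L^\infty}+\|g_k\|_{L^\infty}\le 1/k,
\qquad
\|v_k-w_k\|_{L^\infty(B_1)}\ge\vep_0 .
\]

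\textbf{Step 1 (compactness).} Since $F_k^{\pm}(0,x)=0$ and $G_k(0,0,x)=0$, both $v_k$ and $w_k$ belong to the solution class $S$ with sources $f_k^{\pm},g_k$ (respectively zero). By ABP (Theorem~\ref{thm:abp}) they are uniformly bounded, and by Theorem~\ref{thm:barriers} they share a modulus of continuity on $\overline B_1$ that depends only on $n,\lambda,\Lambda,\mu,K,\omega$. The constant-coefficient operators $F_k^{\pm}(\cdot,0)$ and $G_k(\cdot,\cdot,0)$ are uniformly Lipschitz and vanish at $0$. We pass to a subsequence along which $v_k\to v$, $w_k\to w$, $\phi_k\to\phi$ uniformly and the frozen operators converge locally uniformly to constant-coefficient operators $F^{\pm}_\infty$, $G_\infty$ with the same ellipticity constants; we also extract a subsequential limit $\theta_k\to\theta$ if needed. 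Stability of viscosity solutions under uniform limits, with piecewise quadratic test functions and Remark~\ref{rem:trick} handling the interface points, shows that $w$ solves the limiting homogeneous constant-coefficient problem with $w=\phi$ on $\partial B_1$.

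\textbf{Step 2 (the limit $v$ solves the same problem).} This is the main obstacle. Pointwise stability cannot be used because $\beta_k\to0$ only in $L^n$. We follow Caffarelli: suppose a piecewise quadratic $P$ touches $v$ strictly from above at $x_0$ and the subsolution inequality for the limit problem fails. If $x_0\in T$, we solve with Theorem~\ref{thm:wellposed} the constant-coefficient inhomogeneous problem in a small ball $B_r(x_0)$ with frozen operators,
\[
F^{\pm}_k(D^2\psi_k,0)=-|F_k^{\pm}(D^2P,x)-F_k^{\pm}(D^2P,0)|-|f_k^{\pm}|+\eta,
\qquad
G_k(\nabla\psi_k^{+},\theta\nabla\psi_k^{-},0)=-|\ldots|-|g_k|+\eta,
\]
with $\psi_k=P$ on the boundary; here the right-hand sides carry $\beta_k(\|D^2P\|+1)$ and $\gamma_k$-type errors, and $\eta>0$ is the defect. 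By Theorem~\ref{thm:const}, which uses the Hölder continuity of $\beta,\gamma,f,g$, each $\psi_k$ is piecewise $C^{1,\alpha}$, hence admissible*. This is exactly where the viscosity* notion is needed. By ABP together with Lemma~\ref{lem:silly} and Theorem~\ref{thm:barriers}, $\psi_k$ converges uniformly to a function that stays strictly above $v$ near $\partial B_r$ and agrees with $P$ plus a small error. Translating $\psi_k$ vertically so that it touches $v_k$ from above at some $x_k\in B_r(x_0)$, the viscosity* definition applied to $\psi_k$, combined with ellipticity and the definitions of $\beta_k$ and $\gamma_k$, contradicts the equation $\psi_k$ satisfies with strict margin $\eta$. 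This margin must be arranged so that it works both in the bulk and on the interface. Bulk points $x_0\in B_1^{\pm}$ are handled by the classical Caffarelli argument (Lemma~7.? in \cite{CC} style), which uses the $L^n$ smallness of $\beta_k$ through ABP. The supersolution property is proved symmetrically.

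\textbf{Step 3 (conclusion).} Both $v$ and $w$ now solve the same constant-coefficient homogeneous problem with boundary data $\phi$, so the uniqueness part of Theorem~\ref{thm:wellposed} gives $v=w$. This contradicts $\|v-w\|_{L^\infty}\ge\vep_0$, which passes to the uniform limit.
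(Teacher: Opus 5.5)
There is a genuine gap in Step 2, caused by your choice of barrier. Steps 1 and 3 (contradiction, compactness via Theorem~\ref{thm:barriers}, closedness for $w$, uniqueness) match the paper. In Step 2 you take $\psi_k$ to solve a problem with the frozen operators $F_k^{\pm}(\cdot,0)$ and boundary data $P$, and you use $\psi_k$ itself as the test function. Two things then fail.

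\textbf{Admissibility.} This $\psi_k$ is not known to be admissible*. That class requires $\psi_k^{\pm}$ to be $C^2$ in the open phases as well as $C^1$ up to the interface. Theorem~\ref{thm:const} only yields piecewise $C^{1,\alpha}$, and for a general nonconvex $F_k^{\pm}(\cdot,0)$ there is no interior $C^2$ regularity (Nadirashvili--Vl\u adu\c t). So ``piecewise $C^{1,\alpha}$, hence admissible*'' is false.

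\textbf{The error term.} Even granting $C^2$, the error at a contact point $x_k$ is $F_k^{\pm}(D^2\psi_k(x_k),x_k)-F_k^{\pm}(D^2\psi_k(x_k),0)$. It is bounded only by $\beta_k(x_k)(\|D^2\psi_k(x_k)\|+1)$, whereas your right-hand side compensates $|F_k^{\pm}(D^2P,x)-F_k^{\pm}(D^2P,0)|$. Since $\beta_k\to 0$ only in $L^n$ and $D^2\psi_k$ is uncontrolled (it may blow up at the interface), you cannot rule out a contact point in the bulk. For $x_0\in T$ that is exactly what you need, in order to force $x_k\in T$. Separately, the margin must enter as $-\eta$; with $+\eta$ the inequality at the contact point reads $0\le \eta+\dots$ and gives no contradiction.

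\textbf{The missing idea.} Keep $P$ as the main part of the test function and add a small correction governed by the convex extremal operators.
\begin{itemize}
\item Take $\psi_k=0$ on $\partial B_1$, with $\mathcal{M}^{+}(D^2\psi_k;\frac{\lambda}{n},\Lambda)=-\|D^2P^{\pm}\|\beta_k^{\pm}(x)-|f_k^{\pm}(x)|-\vep_k$ in $B_1^{\pm}$.
\item On $T_1$, take $\mathcal{T}^{+}(\nabla\psi_k^{+},\theta_k\nabla\psi_k^{-})$ equal to a negative constant built from $\|(|\nabla P^{+}|+|\nabla P^{-}|+1)\gamma_k\|_{L^\infty}$, $\|g_k\|_{L^\infty}$ and $\vep_k$.
\item Convexity of $\mathcal{M}^{+}$ gives interior $C^{2,\alpha}$ via Evans--Krylov; this is where the H\"older continuity of $\beta_k$ and $f_k$ is used.
\item Theorem~\ref{thm:const} gives $C^{1,\alpha}$ up to the interface (the interface datum is constant), so $\psi_k$ is admissible*.
\item ABP gives $\|\psi_k\|_{L^\infty}\to 0$.
\end{itemize}
Now test with $P+\psi_k+Q$. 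Ellipticity gives $F_k^{\pm}(D^2P+D^2\psi_k+D^2Q,x)\le F_k^{\pm}(D^2P,x)+\mathcal{M}^{+}(D^2\psi_k)+\mathcal{M}^{+}(D^2Q)$. So the $x$-dependent error only ever involves the fixed $D^2P$ and $\nabla P^{\pm}$, and it cancels pointwise against the right-hand side of $\psi_k$. Together with the normalization $F^{\pm}(D^2P^{\pm},0)<-2\eta$ from Remark~\ref{rem:trick}, this excludes bulk contact points. The viscosity* condition at $x_k\in T_1$ then yields the contradiction.
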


\begin{rem}
Note here that $v$ is a viscosity* solution, while $w$ is a standard viscosity solution.
The meaning of the additional assumptions on the data will become evident later when we prove the main theorem.
\end{rem}

\begin{proof}
Suppose by contradiction that the statement of the lemma is false.
Then there are $\vep > 0$, $n$, $\lambda$, $\Lambda$, $\omega$, $K$, and sequences of operators $\{F_k^{\pm}, G_k\}$, functions $\{f_k^{\pm}, g_k, \phi_k\}$, and constants $\{\theta_k\} \subset [0, 1]$ as above, for which we can find viscosity* solutions $\{v_k\}$ and viscosity solutions $\{w_k\}$ to the problems
\[
\begin{cases}
F_k^{\pm}(D^2 v_k, x) = f_k(x) & \text{ in } B_1^{\pm}\\
G_k(\nabla v_k^{+}, \theta_k \nabla v_k^{-}, x) = g_k(x) & \text{ on } T_1\\
v_k = \phi_k & \text{ on } \partial B_1
\end{cases}
\quad \text{ and } \quad 
\begin{cases}
F_{k}^{\pm}(D^2 w_{k}, 0) = 0 & \text{ in } B_{1}^{\pm}\\
G_{k}(\nabla w_{k}^{+}, \theta_k \nabla w_{k}^{-}, 0) = 0
& \text{ on } T_{1}\\
w_{k} = \phi_k & \text{ on } \partial B_1,
\end{cases}
\]
respectively,
such that
\[
\|\beta_k^{+}\|_{L^n(B_1^{+})} + \|\beta_k^{-}\|_{L^n(B_1^{-})}
+ \|\gamma_k\|_{L^{\infty}(T_1)} + \|f_k^{+}\|_{L^{n}(B^{+}_1)} + \|f_k^{-}\|_{L^{n}(B^{-}_1)}+ \|g_k\|_{L^{\infty}(T_1)} \to 0,
\]
and
\[
\|v_k - w_k\|_{L^{\infty}(B_1)} > \vep \quad \text{ for all } k.
\]

\medskip

Note that the functions $\{v_k\}$ and $\{w_k\}$ have the same modulus of continuity $\omega$ on $\partial B_1$.
Since $\|\phi_k\|_{L^{\infty}(\partial B_1)} \leq K$ for all $k$, and $\|f_k^{\pm}\|_{L^n(B_1^{\pm})} \to 0$ and $\|g_k\|_{L^{\infty}(T_1)} \to 0$ as $k \to \infty$, by the boundary estimate (Theorem~\ref{thm:barriers}) we see that $\{v_k\}$ and $\{w_k\}$ are equicontinuous and uniformly bounded in~$\overline{B}_1$.
Thus, taking subsequences, we may assume that 
\[
v_k \to v_{\infty} \quad \text{ and } \quad w_k \to w_{\infty} \quad \text{ uniformly in } \overline{B}_{1}  \text{ as } k \to \infty.
\]
The limiting functions are continuous in $\overline{B}_{1}$, with
\[
v_{\infty} = w_{\infty} \quad \text{ on } \partial B_1.
\]
Since $\|\beta_k^{\pm}\|_{L^n(B_1^{\pm})} \to 0$ and  $\|\gamma_k\|_{L^{\infty}(T_1)} \to 0$, taking further subsequences we may assume that
\[
F_k^{\pm}(\cdot, 0) \to F^{\pm}(\cdot, 0) \qquad \text{ and } \qquad G_k(\cdot, \cdot, 0) \to G(\cdot, \cdot, 0)
\]
uniformly on compacts in $\scal_n$ and $\R^n \times \R^n$, respectively, as well as $\theta_k \to \theta$.

\medskip

To prove the lemma, it suffices to show that $v_{\infty}$ is a viscosity solution to the transmission problem
\begin{equation}
\label{guro}
\begin{cases}
F^{\pm}(D^2 v, 0) = 0 & \text{ in } B_{1}^{\pm},\\
G(\nabla v^{+}, \theta \nabla v^{-}, 0) = 0 & \text{ on } T_{1}.
\end{cases}
\end{equation}
Indeed, by the closedness property of viscosity solutions (see Proposition~2.8 in~\cite{E}), we already know that $w_{\infty}$ is a viscosity solution to~\eqref{guro} and hence by uniqueness $v_{\infty} = w_{\infty}$ in $\overline{B}_{1}$, contradicting $\|v_k - w_k\|_{L^{\infty}(B_1)}> \vep$ for large $k$.

\medskip

For simplicity, we only check the subsolution property.
To this aim, let $P$ be a piecewise quadratic function touching $v_{\infty}$ from above at $x_0 \in B_1$.
The case $x_0 \notin T_1$ is classical, since we can restrict the function to a small ball entirely contained in $B_1^{+}$ or $B_1^{-}$ and follow the proof of Lemma~8.2 in~\cite{CC}.
Hence, we assume that $x_0 \in T_1$.

\medskip

Suppose for the sake of contradiction that
\begin{equation}
\label{contra1}
G(\nabla P^{+}(x_0), \theta \nabla P^{-}(x_0), 0) = - \eta < 0.
\end{equation}
By Remark~\ref{rem:trick}, we may assume that $P$ touches $v_{\infty}$ strictly at $x_0$ and that
\begin{equation}
\label{contra2}
F^{\pm}(D^2 P^{\pm}, 0) < - 2\eta.
\end{equation}
Let $\vep_k \downarrow 0$ be such that 
\[
F^{\pm}_k(D^2 P^{\pm}, 0) - F^{\pm}(D^2 P^{\pm}, 0) \leq \vep_k,
\]
and
\[
G_k(\nabla P^{+}(x), \theta_k \nabla P^{-}(x), 0) - G(\nabla P^{+}(x), \theta \nabla P^{-}(x), 0) \leq \vep_k \quad \text{ for } x \in T_1.
\]

\medskip

Let $\psi_k \in C(\overline{B}_1)$ be the unique viscosity solution to the Dirichlet problem
\begin{equation}
\label{diri}
\begin{cases}
\mcal^{+}(D^2 \psi_k; \frac{\lambda}{n}, \Lambda) = 
- \|D^2 P^{\pm}\| \beta^{\pm}_k(x) - |f_k^{\pm}(x)| - \vep_k
& \text{ in } B_1^{\pm},\\
\tcal^{+}(\nabla \psi_k^{+}, \theta_k \nabla \psi_k^{-}; \lambda,\Lambda,\mu) = 
- \left\| \left(|\nabla P^{+}|+ |\nabla P^{-}|+1\right) \gamma_k \right\|_{L^{\infty}(T_1)}+ \|g_k\|_{L^{\infty}(T_1)} - \vep_k
& \text{ on } T_1,\\
\psi_k = 0 & \text{ on } \partial B_1,\\
\end{cases}
\end{equation}
whose existence is guaranteed by Theorem~\ref{thm:wellposed}.
Since~\eqref{diri} is a constant-coefficient transmission problem and $\mcal^{+}$ is a convex operator, by the Evans-Krylov $C^{2,\alpha}$ estimates combined with Theorem~\ref{thm:const}, it follows that $\psi_k$ is and admissible* function in $B_1$.
A computation shows that, on the interface $T_1$, we have
\begin{equation}
\label{imp1}
\begin{split}
&G_k(\nabla[P+\psi_k]^{+}, \theta_k \nabla[P+\psi_k]^{-}, \cdot)\\ 
& \quad \leq G_k(\nabla P^{+}, \theta_k \nabla P^{-}, \cdot) 
+ \tcal^{+} (\nabla \psi_k^{+}, \theta_k \nabla \psi_k^{-}; \lambda,\Lambda,\mu)\\
& \qquad \leq G_k(\nabla P^{+}, \theta_k \nabla P^{-}, 0) + \left(|\nabla P^{+}| + |\nabla P^{-}|+1\right)\gamma_k 
+ \tcal^{+} (\nabla \psi_k^{+}, \theta_k \nabla \psi_k^{-}; \lambda,\Lambda,\mu)\\
& \qquad \quad \leq G(\nabla P^{+}, \theta \nabla P^{-}, 0) - |g_k|.
\end{split}
\end{equation}

Let $Q$ be the quadratic polynomial $Q = \frac{\eta}{2(n \Lambda + 4 \mu)}|x- x_0|^2$, and observe that
\[
\begin{cases}
\mcal^{+}(D^2 Q; \frac{\lambda}{n}, \Lambda) \leq \eta & \text{ in } B_1,\\
\tcal^{+}(\nabla Q^{+}, \theta_k \nabla Q^{-}; \lambda, \Lambda, \mu) \leq \eta/2 & \text{ on } T_1.
\end{cases}
\]
Noting that $\|\psi_k\|_{L^{\infty}(B_1)} \to 0$ and $\|v_k - v_{\infty}\|_{L^{\infty}(B_1)} \to 0$ as $k \to \infty$, since $P$ touches $v_{\infty}$ from above in $B_1$, it follows that a vertical translation of $P + \psi_k + Q$ touches $v_{k}$ from above at some $x_k \in B_1$, for sufficiently large $k$.
In fact, $x_k \in T_1$ since otherwise (say) $x_k \in B_1^{+}$ for infinitely many $k$ and hence by the equation for $v_k$
\[
\begin{split}
f^{+}_k(x_k) &
\textstyle
\leq F_k^{+}(D^2 P^{+} + D^2 \psi_k(x_k) + D^2 Q, x_k)\\
& \quad \leq F_k^{+}(D^2 P^{+}, x_k) 
+ \textstyle \mcal^{+}(D^2 \psi_k(x_k); \frac{\lambda}{n}, \Lambda) + \mcal^{+}(D^2 Q; \frac{\lambda}{n}, \Lambda)\\
& \qquad \leq F^{+}(D^2 P^{+}, 0) + \vep_k + \|D^2 P^{+}\| \beta_k(x_k) + \textstyle \mcal^{+}(D^2 \psi_k(x_k); \frac{\lambda}{n}, \Lambda) + \eta\\
& \qquad \quad \leq F^{+}(D^2 P^{+}, 0) - |f_k^{+}(x_k)| +\textstyle \eta,
\end{split}
\]
whence by~\eqref{contra2} we obtain
\[
0 \leq F^{+}(D^2 P^{+}, 0) + \eta \leq - \eta,
\]
a contradiction.

\medskip

Since $x_k \in T_1$ and $\psi_k$ is admissible*, recalling that $v_k$ is a viscosity* solution, by the transmission condition and~\eqref{imp1}, we have that
\[
\begin{split}
g_k(x_k) &
\textstyle
\leq G_k(\nabla [P + \psi_k + Q]^{+}(x_k), \theta_k \nabla [P + \psi_k + Q]^{-}(x_k), x_k)\\
&
\textstyle
\quad \leq G_k(\nabla[P+\psi_k]^{+}(x_k), \theta_k \nabla[P+\psi_k]^{-}(x_k),x_k) + \tcal^{+}(\nabla Q^{+}(x_k), \theta_k \nabla Q^{-}(x_k); \lambda, \Lambda, \mu)\\
& \qquad \leq G(\nabla P^{+}(x_k), \theta \nabla P^{-}(x_k), 0) - |g_k(x_k) | + \eta/2,
\end{split}
\]
whence it follows that
\begin{equation}
\label{lastozko}
0 \leq G(\nabla P^{+}(x_k), \theta \nabla P^{-}(x_k), 0) + \eta/2.
\end{equation}
We wish to point out that this is the only place in the paper where we need the stronger notion of viscosity* solution.

\medskip

Since $x_k \to x_0$ as $k \to \infty$ (recall that $P$ touches $v_{\infty}$ strictly), passing to the limit in~\eqref{lastozko}, by \eqref{contra1} we conclude
\[
0 \leq G(\nabla P^{+}(x_0), \theta \nabla P^{-}(x_0), 0) + \eta/2 \leq - \eta/2,
\]
a contradiction.
Therefore, we must have $G(\nabla P^{+}(x_0), \theta \nabla P^{-}(x_0), 0) \geq 0$ as claimed.
\end{proof}

\begin{rem}
\label{rem:convex}
The key ingredient in the proof above was the use of non-explicit barriers $\psi_k$ defined as solutions to the constant-coefficient, inhomogeneous, and convex transmission problem~\eqref{diri}.
It would be interesting to give conditions under which such barriers become piecewise $C^{1,1}$, i.e., with bounded Hessian up to the interface, and therefore admissible test functions.
Such an improved regularity would allow us to extend Lemma~\ref{lem:approx} to ordinary viscosity solutions.
We point out that the piecewise $C^{1,1}$ regularity of solutions to convex transmission problems is not known even in the homogeneous case.
In the particular case when $G$ is linear and $\theta = 0$, the transmission problem decouples into two boundary-value problems and solutions are piecewise $C^{2,\alpha}$ by the boundary regularity in~\cite{LZ,MS}.
\end{rem}

\medskip

To prove the regularity theorem, we concatenate three simplifying assumptions:

\medskip

\noindent\textbf{First simplification.}
\noindent
Replacing the operators and sources
\[
F^{\pm}(M,x), \qquad G(\xi^{+},\xi^{-},x), \qquad f^{\pm}(x), \qquad g(x)
\]
by the functions
\[
F^{\pm}(M, x) - F^{\pm}(0,x), \qquad G(\xi^{+}, \xi^{-}, x) - G(0,0,0), \qquad f^{\pm}(x) - F^{\pm}(0,x), \qquad g(x) - G(0,0,0),
\]
respectively, we may assume that 
\begin{equation}
\label{simp1}
F^{\pm}(0, x) = 0 \, \text{ in } B_1^{\pm}, \qquad \text{ and } \qquad G(0,0,0) = 0.
\end{equation}

\medskip

\noindent\textbf{Second simplification.}
The modulus $\beta^{\pm}$ is continuous at $0$, hence for each $\delta >0$, there is a small $0 < R_0 \leq 1$ such that
\begin{equation}
\label{mas1}
r^{-1}\|\beta^{\pm}\|_{L^{n}(B_r^{\pm})} \leq \delta \qquad \text{ for all } r \leq R_0.
\end{equation}
Since $u$ solves~\eqref{calarca} in the smaller ball $B_{R_0} \subset B_1$, replacing the data 
\[
u(x), \qquad F^{\pm}(M, x), \qquad G(\xi^{+}, \xi^{-}, x), \qquad f^{\pm}(x), \qquad g(x),
\]
by the rescalings
\[
R_{0}^{-1}u(R_{0} x), 
\qquad 
R_{0} F^{\pm}(R_{0}^{-1} M, R_{0} x),
\qquad
G(\xi^{+}, \xi^{-}, R_{0} x),
\qquad
R_{0} f^{\pm}(R_{0} x),
\qquad g(R_{0} x),
\]
respectively, we may assume that 
\begin{equation}
\label{simp2}
r^{-1}\|\beta^{\pm}\|_{L^{n}(B_r^{\pm})} \leq \delta \quad \text{ for all } r \leq 1.
\end{equation}
Indeed, writing $\widetilde{\beta}^{\pm}$ for the modulus of $R_{0} F^{\pm}(R_{0}^{-1} M, R_{0} x)$, it is easy to see that $\widetilde{\beta}^{\pm}(x) \leq \beta (R_0 x)$ and hence by~\eqref{mas1}
\[
r^{-1}\|\widetilde{\beta}^{\pm}\|_{L^{n}(B_r^{\pm})} 
\leq r^{-1}\|\beta^{\pm}(R_{0} \cdot)\|_{L^{n}(B_r^{\pm})} = (r R_{0})^{-1} \|\beta^{\pm}\|_{L^{n}(B_{r R_{0}}^{\pm})} \leq \delta \quad \text{ for all } r \leq 1.
\]
Also, note that the rescaling preserves the ellipticity constants.

\medskip

\noindent\textbf{Third simplification.}
Further rescaling and normalizing $u$ by
\[
\widetilde{u}(x) := \frac{R^{-1}u(Rx)}{L},
\]
where $0 < R \leq 1$ and
\[
L := \max\left\{
2 \frac{\|u\|_{L^{\infty}(B_{1})}}{R} 
+ 2 \frac{|g(0)|}{\min\{\lambda,1\}}
+ \frac{R^{\alpha}}{\delta}
\left(
[f^{+}]_{C^{\alpha-1}_{[0]}(B_1^{+})} 
+ [f^{-}]_{C^{\alpha-1}_{[0]}(B_1^{-})} 
+ [g]_{C^{\alpha}_{[0]}(T_1)} 
\right), \, 1
\right\},
\]
(here recall definitions~\eqref{point:morrey} and~\eqref{point:holder} for the punctual norms)
the original problem~\eqref{calarca} restricted to the smaller ball $B_{R} \subset B_{1}$ can be written equivalently as
\[
\begin{cases}
\widetilde{F}^{\pm}(D^2 \widetilde{u}, x) := 
\frac{R}{L}
F^{\pm}\left(
\frac{L}{R}D^2 \widetilde{u}, R x\right) = \frac{R}{L} f^{\pm}\left(R x\right) =: \widetilde{f}^{\pm}(x)
& \text{ in } B^{\pm}_1,\\
\widetilde{G}(\nabla \widetilde{u}^{+}, \theta \nabla \widetilde{u}^{-}, x) := 
\frac{1}{L}
G\left( L \nabla \widetilde{u}^{+}, L \theta \nabla \widetilde{u}^{-}, R x\right)
= \frac{1}{L} g\left(R x\right) =: \widetilde{g}(x)
& \text{ on } T_1.
\end{cases}
\]
The new operators $\{\widetilde{F}^{\pm}, \widetilde{G}\}$ have ellipticity constants $\lambda$, $\Lambda$, and $\mu$.
By definition, we have that
\begin{equation}
\label{simp3}
\|\widetilde{u}\|_{L^{\infty}(B_1)} \leq \frac{1}{2} \qquad \text{ and } \qquad |\widetilde{g}(0)| \leq \frac{\min\{1, \lambda\}}{2}.
\end{equation}
Since $\widetilde{\beta}^{\pm}(x) := \beta_{\widetilde{F}^{\pm}}(x) \leq \beta^{\pm}(R x)$, arguing as above, it is easy to see that~\eqref{simp2} still holds for $\widetilde{\beta}^{\pm}$.
Moreover, since $\widetilde{\gamma}(x) := \gamma_{\widetilde{G}}(x) \leq \gamma(R x)$, 
choosing $0 <R \leq 1$ small such that 
\[
R \leq \left(\delta [\gamma]_{C^{\alpha}_{[0]}(T_1)}^{-1}\right)^{\frac{1}{\alpha}},
\]
we obtain
\begin{equation}
\label{simp4}
\|\widetilde{\gamma}\|_{L^{\infty}(T_r)} 
\leq \|\gamma\|_{L^{\infty}(T_{r R})}
\leq [\gamma]_{C^{\alpha}_{[0](T_1)}} R^{\alpha} r^{\alpha}
\leq \delta r^{\alpha}
\qquad \text{ for all } r \leq 1.
\end{equation}
Finally, we note that the forcing terms are small with
\begin{equation}
\label{simp5}
\|\widetilde{f}^{\pm}\|_{L^n(B_r^{\pm})}
\leq \frac{1}{L} \|f^{\pm}\|_{L^n(B^{\pm}_{r R})}
\leq \frac{[f^{\pm}]_{C^{\alpha-1}_{[0]}(B_1^{\pm})}R^{\alpha}}{L} 
r^{\alpha}
\leq \delta r^{\alpha} \qquad \text{ for all } r \leq 1,
\end{equation}
and
\begin{equation}
\label{simp6}
\|\widetilde{g} - \widetilde{g}(0)\|_{L^{\infty}(T_r)} = \frac{1}{L} \|g - g(0)\|_{L^{\infty}(T_{rR})}
\leq  \frac{[g]_{C^{\alpha}_{[0]}(T_1)}R^{\alpha}}{L} r^{\alpha}
\leq \delta r^{\alpha} \qquad \text{ for all } r \leq 1.
\end{equation}

\medskip

Thanks to the simplifying assumptions~\eqref{simp1}, \eqref{simp2}, \eqref{simp3}, \eqref{simp4}, \eqref{simp5}, and \eqref{simp6} above, 
it suffices to prove the following pointwise estimate:

\begin{thm}
Let $\{F^{\pm}, G\}$ be operators with ellipticity constants $0 < \lambda \leq \Lambda$ and $\mu \geq 0$,
with H\"{o}lder continuous moduli $\beta^{\pm}$ and $\gamma$ in $B_1^{\pm} \cup T_1$ and $T_1$, respectively.
Let $f^{\pm}$ and $g$ be H\"{o}lder continuous functions in $B_1^{\pm} \cup T_1$ and $T_1$, respectively.
Let $\theta \in [0,1]$ be a constant.
Let $u \in C(B_1)$ be a viscosity solution to 
\[
\begin{cases}
F^{\pm}(D^2 u, x) = f^{\pm}(x) & \text{ in } B_{1}^{\pm},\\
G(\nabla u^{+}, \theta \nabla u^{-}, x) = g(x) &\text{ on } T_{1}.
\end{cases}
\]
Assume that 
\begin{equation}
\label{ue1}
F^{\pm}(0, x) = 0 \, \text{ in } B_1^{\pm}, \qquad G(0,0, 0) = 0, \qquad \|u\|_{L^{\infty}(B_1)} \leq \frac{1}{2}, \qquad |g(0)| \leq \frac{\min\{1,\lambda\}}{2}.
\end{equation}

For each $0 < \alpha < \overline{\alpha}$, there exists $\delta > 0$ depending only on $n$, $\lambda$, $\Lambda$, $\mu$, $\alpha$, and $\overline{\alpha}$ such that if
\begin{equation}
\label{ue2}
\|\beta^{\pm}\|_{L^n(B_r^{\pm})} \leq \delta r,
\quad 
\|\gamma\|_{L^{\infty}(T_r)} \leq \delta r^{\alpha},
\quad
\|f^{\pm}\|_{L^n(B_r^{\pm})} \leq \delta r^{\alpha},
\quad
\|g- g(0)\|_{L^{\infty}(T_r)} \leq \delta r^{\alpha}
\end{equation}
for all $r \leq 1$,
then $u$ is piecewise $C^{1,\alpha}$ at the origin, i.e., there is a piecewise linear function
\[
\ell(x) = a^{+} \cdot x \, \chara{\{x_n \geq 0\}} + a^{-} \cdot x \, \chara{\{x_n < 0\}} + u(0),
\]
with $a^{\pm} \in \R^n$, such that
\[
|a^{\pm}| \leq C,
\]
\[
G(a^{+}, \theta a^{-}, 0) = g(0),
\]
\[
\|u - \ell\|_{L^{\infty}(B_r)} \leq 
Cr^{1 +\alpha}
\quad \text{ for all } r \leq 1,
\]
for some constant $C > 0$ depending only on $n$, $\lambda$, $\Lambda$, $\mu$, $\alpha$, and $\overline{\alpha}$.
\end{thm}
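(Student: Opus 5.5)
We follow the iteration scheme of Section~\ref{sec:inhom}, with Lemma~\ref{lem:approx} (Approximation~II) replacing Lemma~\ref{lem:silly}. First, since $|g(0)|\le \min\{1,\lambda\}/2$ and $G(0,0,0)=0$, uniform ellipticity gives a unique $t$ with $G(te_n,0,0)=g(0)$ and $|t|\le 1/2$. We set $\ell_0(x)=t x_n^{+}+u(0)$ (or $c_0=0$) and $\ell_{-1}=0$. Then $\|u-\ell_0\|_{L^\infty(B_1)}\le 1$ by~\eqref{ue1}, and $G(a_0^+,\theta a_0^-,0)=g(0)$. By induction we construct piecewise linear $\ell_k=a_k^{+}\cdot x\,\chara{\{x_n\ge0\}}+a_k^{-}\cdot x\,\chara{\{x_n<0\}}+c_k$ satisfying
\[
G(a_k^{+},\theta a_k^{-},0)=g(0),\qquad \|u-\ell_k\|_{L^\infty(B_{\rho^k})}\le\rho^{k(1+\alpha)},
\]
together with the coefficient increments~\eqref{con3}. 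The limit $\ell$ then gives the claim. A key a priori point is that $|a_k^\pm|\le C_1$ uniformly, with $C_1$ universal: it is $|t|+C_0\sum\rho^{j\alpha}$, and $\rho$ is chosen before $\delta$.

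For the inductive step we set $v(x)=(u-\ell_k)(\rho^kx)/\rho^{k(1+\alpha)}$. This is a viscosity* solution (piecewise linear perturbations preserve admissibility*) of a problem with operators
\[
F_k^\pm(M,x)=\rho^{k(1-\alpha)}F^\pm(\rho^{-k(1-\alpha)}M,\rho^kx),\qquad G_k(\xi^+,\xi^-,x)=\rho^{-k\alpha}\bigl[G(a_k^++\rho^{k\alpha}\xi^+,\theta a_k^-+\rho^{k\alpha}\xi^-,\rho^kx)-g(0)\bigr],
\]
and data $f_k^\pm=\rho^{k(1-\alpha)}f^\pm(\rho^k\cdot)$, $g_k=\rho^{-k\alpha}(g(\rho^k\cdot)-g(0))$. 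These have the same ellipticity, $F_k^\pm(0,x)=0$, $G_k(0,0,0)=0$ and $g_k(0)=0$. By~\eqref{ue2}, $\|f_k^\pm\|_{L^n(B_1^\pm)}\le\delta$ and $\|g_k\|_{L^\infty(T_1)}\le\delta$. For the moduli, $\beta_{F_k^\pm}(x)\le\rho^{k(1-\alpha)}\beta^\pm(\rho^kx)$, since $\|M\|+1$ scales favourably, so $\|\beta_{F_k}\|_{L^n(B_1)}\le\rho^{-k\alpha}\|\beta\|_{L^n(B_{\rho^k})}\le\delta\rho^{k(1-\alpha)}\le\delta$. This is exactly why~\eqref{ue2} requires the rate $\delta r$ for $\beta$. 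For $G_k$ we get
\[
\gamma_{G_k}(x)\le\rho^{-k\alpha}\gamma(\rho^kx)\bigl(|a_k^+|+|a_k^-|+2\bigr)\le(2C_1+2)\delta,
\]
using $\|\gamma\|_{L^\infty(T_r)}\le\delta r^\alpha$ and the uniform bound on $a_k^\pm$. One also needs $G_k(0,0,x)$ small in order to apply Lemma~\ref{lem:approx}. We absorb it into the source: $|G_k(0,0,x)|\le\gamma_{G_k}(x)\le C\delta$, and replacing $G_k$ by $G_k-G_k(0,0,\cdot)$ and $g_k$ by $g_k-G_k(0,0,\cdot)$ keeps $\|g_k\|_\infty\le C\delta$.

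We then let $w$ solve the constant-coefficient homogeneous problem with operators $F_k^\pm(\cdot,0)$, $G_k(\cdot,\cdot,0)$ in $B_{3/4}$ and $w=v$ on $\partial B_{3/4}$, which is possible by Theorem~\ref{thm:wellposed}. The boundary datum has a uniform modulus: $\|v\|_{L^\infty(B_1)}\le1$ and $v$ lies in a solution class with small data, so Corollary~\ref{cor:trueint} gives a universal H\"older modulus on $\partial B_{3/4}$. Lemma~\ref{lem:approx}, rescaled, gives $\|v-w\|_{L^\infty(B_{3/4})}\le\frac12\rho^{1+\alpha}$ once $\delta$ is small, with $\delta$ depending on $\rho$, which is fixed first. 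By~\eqref{fund2} and Theorem~\ref{thm:piecewise}, $w$ is piecewise $C^{1,\overline\alpha}$ with norm $\le C_0$, and it satisfies $G_k(\nabla w^+(0),\theta\nabla w^-(0),0)=0$ classically. Choosing $\rho$ with $C_0\rho^{1+\overline\alpha}\le\frac12\rho^{1+\alpha}$ yields $\|v-\overline\ell\|_{L^\infty(B_\rho)}\le\rho^{1+\alpha}$, where $\overline\ell$ is the Taylor piecewise linear function of $w$ at $0$. We then set $a_{k+1}^\pm=a_k^\pm+\rho^{k\alpha}\nabla w^\pm(0)$. The transmission identity for $w$ unwinds to $G(a_{k+1}^+,\theta a_{k+1}^-,0)=g(0)$, which closes the induction.

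The main obstacle is verifying the hypotheses of Lemma~\ref{lem:approx} uniformly in $k$. In particular, the growth factor $|\xi|+1$ in the definition of $\gamma$ must not destroy the smallness of $\gamma_{G_k}$, and this is what forces the a priori universal bound on $a_k^\pm$. The Lemma also requires H\"older continuity of the moduli, which holds qualitatively after rescaling, and it must be applied on $B_{3/4}$ rather than $B_1$. Everything else is a routine adaptation of the proof in Section~\ref{sec:inhom}.
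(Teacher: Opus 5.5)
Your proposal is correct and is essentially the paper's proof. It uses the same Caffarelli-type iteration with Lemma~\ref{lem:approx} and \eqref{fund2}, the same uniform bound on $a_k^{\pm}$ to control $\gamma_{G_k}$, and, once you absorb $G_k(0,0,\cdot)$ into the source, literally the same rescaled operators and data; the one real difference is that you obtain the modulus of $v|_{\partial B_{3/4}}$ at each step directly from $\|v\|_{L^{\infty}(B_1)}\le 1$ and the $C^{\alpha_\star}$ estimate, instead of carrying the paper's extra inductive H\"older bound \eqref{g4}, which is legitimate and slightly more economical.

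One slip: the pointwise bound is only $\beta_{F_k^{\pm}}(x)\le\beta^{\pm}(\rho^k x)$, with no extra factor $\rho^{k(1-\alpha)}$, since for large $\|M\|$ the rescaling of $M$ cancels exactly (e.g.\ for linear $F^{\pm}$). This still gives $\|\beta_{F_k^{\pm}}\|_{L^n(B_1^{\pm})}\le\rho^{-k}\|\beta^{\pm}\|_{L^n(B^{\pm}_{\rho^k})}\le\delta$, and it is this corrected computation, not yours, that explains why \eqref{ue2} imposes the rate $\delta r$ on $\beta^{\pm}$.
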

\begin{proof}
First, we recall the piecewise $C^{1,\overline{\alpha}}$ estimate~\eqref{fund2} stated in the previous section, as well as the following variant of the H\"{o}lder estimate (Theorem~\ref{thm:holder}):
\begin{itemize}
\item 
If $v \in S^{\star}(\lambda, \Lambda, \mu, \theta; f^{+}, f^{-}, g)$ in $B_{1}$, then $v$ is $C^{\mao}$ in $B_1$, with an estimate
\begin{equation}
\label{fund1}
[v]_{C^{\mao}(\overline{B}_{3/4})} \leq C_{\star}\left( 
\|v\|_{L^{\infty}(B_{1})} + \|f^{+}\|_{L^{n}(B_{1}^{+})} + \|f^{-}\|_{L^{n}(B_{1}^{-})} + \|g\|_{L^{\infty}(T_{1})}
\right),
\end{equation}
where $0 < \mao < 1$ and $C_{\star} > 0$ are universal constants.
\end{itemize}

\medskip

Our efforts are devoted to constructing a sequence of piecewise linear functions
\begin{equation}
\label{g0}
\ell_k(x) = a_k^{+} \cdot x \, \chara{\{x_n \geq 0\}} + a_k^{-} \cdot x \, \chara{\{x_n < 0\}} + c_k,
\end{equation}
with $a_k^{\pm} \in \R^n$ and $c_k \in \R$ such that, for all $k \geq 0$
\begin{equation}
\label{g1}
G(a_k^{+}, \theta a_k^{-}, 0) = g(0),
\end{equation}
\begin{equation}
\label{g2}
\|u - \ell_k\|_{L^{\infty}(B_{\rho^{k}})} \leq \rho^{k(1+\alpha)},
\end{equation}
\begin{equation}
\label{g3}
|a_k^{\pm} - a_{k-1}^{\pm}| \leq C_0 \rho^{(k-1)\alpha}, \qquad |c_k - c_{k-1}| \leq C_0 \rho^{(k-1)(1+\alpha)},
\end{equation}
\begin{equation}
\label{g4}
|(u - \ell_k)(\rho^{k} x) - (u - \ell_k)(\rho^{k} y)| \leq 2 C_{\star} \rho^{k (1+ \alpha)} |x-y|^{\mao}
\quad \text{ for all } 
x \text{ and } y \text{ in } \overline{B}_{3/4}.
\end{equation}
where $0 < \rho < 1/3$ is a constant depending only on $n$, $\lambda$, $\Lambda$, $\mu$, $\alpha$, and $\overline{\alpha}$.
We can then take $\ell = \lim_{k} \ell_k$ to be the piecewise linear function from the statement.

\medskip

Let $\ell_0 = \ell_{-1} = t (x_n)_{+}$, where $t \in \R$ is the unique constant such that
\begin{equation}
\label{deo1}
G(t e_n, 0,0) = g(0)
\end{equation}
and whose existence is guaranteed by ellipticity of $G$.

\medskip

We claim that~\eqref{g0}, \eqref{g1}, \eqref{g2}, \eqref{g3}, and~\eqref{g4} hold for $k = 0$.
Indeed, first note that~\eqref{g1} is equivalent to~\eqref{deo1}.
By~\eqref{ue1}, we have that
\begin{equation}
\label{t:bound}
|t| \leq \frac{1}{\lambda} |g(0)| \leq \frac{1}{2},
\end{equation}
whence it follows that
\[
\|u - t x_n^{+}\|_{L^{\infty}(B_1)} \leq \|u\|_{L^{\infty}(B_1)} + |t| \leq 1
\]
and hence~\eqref{g2}.
Since~\eqref{g3} is obvious, it remains to prove~\eqref{g4}.
For this, noting that $u \in S(\lambda, \Lambda, \mu, \theta; f^{+}, f^{-}, g - G(0,0,\cdot))$ in $B_1$ 
(see Proposition~2.16 in~\cite{E}),
by the $C^{\mao}$ estimate~\eqref{fund1} combined with~\eqref{ue1} and~\eqref{ue2}, we deduce
\[
\begin{split}
[u]_{C^{\mao}(\overline{B}_{3/4})}
&\leq C_{\star} \left( 
\|u\|_{L^{\infty}(B_1)} 
+ \|f^{+}\|_{L^n(B_1^{+})}
+ \|f^{-}\|_{L^n(B_1^{-})}
+ \|g - G(0,0,\cdot)\|_{L^{\infty}(T_1)}
\right)
\\
& \quad \leq C_{\star} \left( 
\frac{1}{2}
+ 2 \delta 
+ |g(0)| 
+ \|g - g(0)\|_{L^{\infty}(T_1)}
+ \|G(0,0,\cdot)\|_{L^{\infty}(T_1)}
\right)\\
& \qquad \leq C_{\star} \left( 
1 + 3 \delta + \|\gamma\|_{L^{\infty}(T_1)}\right) 
\leq C^{\star} \left( 1+4\delta\right) \leq 2 C^{\star}
\end{split}
\]
by taking $\delta \leq 1/4$, whence~\eqref{g4} follows.

\medskip

Assume by induction that we have constructed $\ell_k$ given by~\eqref{g0} and satisfying~\eqref{g0}, \eqref{g1}, \eqref{g2}, \eqref{g3}, and~\eqref{g4} for some $k \geq 0$.
We must build $\ell_{k+1}$ and check that the above properties hold for $k+1$.

\medskip

First, we note a useful coefficient bound.
Since $\rho \leq 1/3$, from~\eqref{g3} and~\eqref{t:bound} we deduce
\begin{equation}
\label{a:bound}
|a_k^{\pm}| \leq |a^{\pm}_0| + \sum_{j = 1}^{k} |a_j^{\pm} - a_{j-1}^{\pm}| \leq 
\frac{1}{2} + C_0 \sum_{j = 1}^{\infty}\rho^{(j-1) \alpha} 
\leq \frac{1}{2} + \frac{C_0}{1 - 3^{- \alpha}} =: C_1.
\end{equation}

Let $v$ be the function
\[
v(x) := \frac{(u-\ell_k)(\rho^{k}x)}{\rho^{k(1+\alpha)}}
\]
and note that it is a viscosity solution to the transmission problem
\begin{equation}
\label{k0}
\begin{cases}
F^{\pm}_k(D^2 v, x) = f_k^{\pm}(x) & \text{ in } B_1^{\pm},\\
G_k(\nabla v^{+}, \theta \nabla v^{-}, x) = g_k(x) & \text{ on } T_1,
\end{cases}
\end{equation}
where
\[
F_k^{\pm}(M, x) := \rho^{k(1-\alpha)} F^{\pm}(\rho^{- k(1-\alpha)} M, \rho^k x),
\qquad f_k^{\pm}(x) := \rho^{k(1-\alpha)} f^{\pm}(\rho^{k} x),
\]
\[
G_k (\xi^{+}, \xi^{-}, x) := \frac{1}{\rho^{k \alpha}} \left[
G(a_k^{+} + \rho^{k\alpha} \xi^{+}, \theta a_k^{-} + \rho^{k\alpha} \xi^{-}, \rho^{k} x)
- G(a_k^{+}, \theta a_k^{-}, \rho^{k} x)
\right],
\]
and
\[
g_k(x) := \frac{1}{\rho^{k \alpha}} \left[g(\rho^{k} x)
- G(a_k^{+}, \theta a_k^{-}, \rho^{k} x)
\right].
\]

\medskip

The operators $\{F_{k}^{\pm}, G_k\}$ have ellipticity constants $\lambda$, $\Lambda$, and $\mu$, and by definition satisfy
\begin{equation}
\label{k1}
F_k^{\pm}(0, x) = 0 \, \text{ in } B_1^{\pm}, \qquad G_k(0,0,x) = 0 \, \text{ on } T_1.
\end{equation}
The sources $f^{\pm}_{k}$ and $g_k$ are small, namely, by the induction hypothesis~\eqref{g1} we have that
\begin{equation}
\label{k2}
g_k(0) =
\rho^{-k \alpha}
\left[g(0)
- G(a_k^{+}, \theta a_k^{-}, 0)
\right]
= 0
\end{equation}
and hence by~\eqref{ue2}
\begin{equation}
\label{k3}
\|f_k^{\pm}\|_{L^n(B_1^{\pm})} = \rho^{-k\alpha} \|f^{\pm}\|_{L^{n}(B^{\pm}_{\rho^{k}})} \leq \delta,
\end{equation}
and further by the coefficient bounds~\eqref{a:bound}
\begin{equation}
\label{k4}
\begin{split}
 \|g_k\|_{L^{\infty}(T_1)} &\leq \rho^{-k\alpha} \|g - g(0)\|_{L^{\infty}(T_{\rho^{k}})} 
 + \rho^{-k \alpha} \| G(a_k^{+}, \theta a_k^{-}, \rho^{k} \cdot) - G(a_k^{+}, \theta a_k^{-}, 0)\|_{L^{\infty}(T_{\rho^{k}})}\\
& \qquad \leq \delta + (|a_k^{+}| + |a_k^{-}| + 1) \rho^{-k \alpha} \|\gamma\|_{L^{\infty}(T_{\rho^{k}})}\\
&  \qquad \qquad \leq 2 (C_1 +1 ) \delta.
 \end{split}
\end{equation}

\medskip

Moreover, the operators are close to constant-coefficient ones.
For $x \in B_1^{\pm}$ we have that
\[
\begin{split}
&\big|F_k^{\pm}(M, x) - F_k^{\pm}(M, 0)\big|\\
& \qquad \leq \rho^{k(1-\alpha)} \big|F^{\pm}(\rho^{-k(1-\alpha)}M, \rho^{k} x) - F^{\pm}(\rho^{-k(1-\alpha)}M, 0)\big|\\
& \qquad \qquad \leq \beta(\rho^k x) \left( \|M\| + 1\right)
\end{split}
\]
and hence by~\eqref{ue2} we obtain
\begin{equation}
\label{k5}
\|\beta_{F_k^{\pm}}\|_{L^n(B_1^{\pm})} \leq \rho^{-k}\|\beta^{\pm}\|_{L^n(B^{\pm}_{\rho^{k}})} \leq \delta.
\end{equation}
Similarly, for $x \in T_1$ we see that
\[\begin{split}
&\big|G_k(\xi^{+}, \xi^{-}, x) - G_k(\xi^{+}, \xi^{-}, 0) \big| \\
& \qquad \leq 
\rho^{-k \alpha}\big|G(a_k^{+} + \rho^{k \alpha} \xi^{+},\theta a_k^{-} + \rho^{k \alpha} \xi^{-}, \rho^{k} x) -G(a_k^{+} + \rho^{k\alpha} \xi^{+}, \theta a_k^{-} + \rho^{k\alpha} \xi^{-}, 0) \big|\\ 
& \qquad \qquad +\rho^{-k \alpha} \big|G(a_{k}^{+}, \theta a_{k}^{-}, \rho^{k} x) - G(a_{k}^{+}, \theta a_{k}^{-},0)\big|\\
& \qquad \qquad \qquad \leq 
\left( \big| a_{k}^{+} + \rho^{k\alpha} \xi^{+}\big| + \big| \theta a_{k}^{-} + \rho^{k\alpha} \xi^{-}\big| + |a_{k}^{+}| + \theta |a_{k}^{-}| + 2 \right) 
\rho^{-k \alpha} \gamma(\rho^{k} x)\\
& \qquad \qquad \qquad \qquad  \leq \left( |\xi^{+}| + |\xi^{-}| + 2 |a_k^{+}| + 2 |a_k^{-}| + 2\right) 
\rho^{-k \alpha} \|\gamma\|_{L^{\infty}(T_{\rho^{k}})}\\
& \qquad \qquad \qquad \qquad \qquad \leq 2(2 C_1 + 1) \left( |\xi^{+}| + |\xi^{-}| + 1\right) 
\delta,
\end{split}\]
where in the last line we have used~\eqref{ue2} and~\eqref{a:bound}, and hence
\begin{equation}
\label{k6}
\|\gamma_{G_k}\|_{L^{\infty}(T_1)} \leq 2(2 C_1 + 1) \delta.
\end{equation}

\medskip

Next, we approximate $v$ by a constant-coefficient replacement.
Let $w$ be the unique viscosity solution to 
\begin{equation}
\label{rep1}
\begin{cases}
F_k^{\pm}(D^2 w, 0) = 0 & \text{ in } B_{3/4},\\
G_k(\nabla w^{+}, \theta \nabla w^{-}, 0) = 0 & \text{ in } T_{3/4},\\
w = v & \text{ on } \partial B_{3/4},
\end{cases}
\end{equation}
and fix $\vep > 0$.
By~\eqref{g4}, we have that
\[
[v]_{C^{\mao}(\overline{B}_{3/4})} \leq 2C_{\star},
\]
hence the boundary datum $\phi = v|_{\partial B_{3/4}}$ has a H\"{o}lder modulus of continuity 
$\omega(r) = 2 C_{\star} r^{\mao}$.
Since $v$ solves~\eqref{k0}, 
with~\eqref{k1}, \eqref{k2}, and small data satisfying~\eqref{k3}, \eqref{k4}, \eqref{k5}, and~\eqref{k6},
the approximation lemma (Lemma~\ref{lem:approx}) yields the estimate
\begin{equation}
\label{approx}
\|v-w\|_{L^{\infty}(B_{3/4})} \leq \vep,
\end{equation}
by choosing $\delta > 0$ sufficiently small depending only on 
$n$, $\lambda$, $\Lambda$, $\mu$, and $\vep$.

\medskip

We claim that~\eqref{rep1} has interior piecewise $C^{1,\overline{\alpha}}$ estimates of the form~\eqref{fund2}.
Indeed, the equation can be written equivalently as
\[
\begin{cases}
\widetilde{F}^{\pm}(D^2 [\rho^{k\alpha} w]) := 
\rho^{k(2-\alpha)}F^{\pm}(\rho^{-k(2-\alpha)} D^2 [\rho^{k\alpha} w], 0) = 0 & \text{ in }B_{3/4}^{\pm},\\
\widetilde{G}(\nabla[\rho^{k\alpha} w^{+}], \theta \nabla[\rho^{k\alpha} w^{-}])
:= G(a_k^{+} + \nabla[\rho^{k\alpha} w^{+}], \theta a_k^{-} + \theta \nabla[\rho^{k \alpha} w^{-}], 0) - g(0) = 0 & \text{ on } T_{3/4},
\end{cases}
\]
where the new operators $\{\widetilde{F}^{\pm}, \widetilde{G}\}$ have ellipticity constants $\lambda$, $\Lambda$, and $\mu$, and satisfy $\widetilde{F}^{\pm}(0) = \widetilde{G}(0, 0) = 0$ by~\eqref{ue1} and the induction hypothesis~\eqref{g1}.
Hence, by interior regularity for such problems, we have the estimate
\begin{equation}
\label{rep2}
\|w^{\pm}\|_{C^{1,\overline{\alpha}}(\overline{B}^{\pm}_{1/2})} \leq C_0 \|w\|_{L^{\infty}(B_{3/4})}
\leq C_0 \|v\|_{L^{\infty}(\partial B_{3/4})} \leq C_0,
\end{equation}
where in the last inequalities we have used the ABP and that $\|v\|_{L^{\infty}(\partial B_{3/4})} \leq 1$ by~\eqref{g2}.

\medskip

Let $\overline{\ell}$ be the piecewise linear function given on each side by
\[
\overline{\ell}^{\pm}(x) := \nabla w^{\pm}(0) \cdot x + w(0).
\]
From~\eqref{rep2}, we deduce 
\begin{equation}
\label{rep3}
\|w - \overline{\ell}\|_{L^{\infty}(B_{3\rho/2})} 
\leq C_0 \left(\frac{3}{2} \rho\right)^{1+\overline{\alpha}}
\leq \frac{1}{2} \rho^{1+\alpha}
\end{equation}
by choosing $0 < \rho< 1/3$  sufficiently small depending on $n$, $\lambda$, $\Lambda$, $\mu$, and $\alpha < \overline{\alpha}$.
Letting $\vep = \frac{1}{2} \rho^{\alpha}$, combining~\eqref{approx} and~\eqref{rep3} yields
\begin{equation}
\label{rep4}
\begin{split}
\|v - \overline{\ell}\|_{L^{\infty}(B_{\rho})} 
\leq \|v - \overline{\ell}\|_{L^{\infty}(B_{3\rho/2})} 
\leq \|v-w\|_{L^{\infty}(B_{3/4})} + \|w - \overline{\ell}\|_{L^{\infty}(B_{3\rho/2})} 
\leq \rho^{1+\alpha}.
\end{split}
\end{equation}

\medskip

Define the $(k+1)$-th piecewise linear function by 
\[
\ell_{k+1}(x) := \ell_k(x) + \rho^{k (1+ \alpha)} \textstyle\overline{\ell}\left(\rho^{-k} x \right),
\]
or equivalently, with coefficients
\[
a^{\pm}_{k+1} := a^{\pm}_k + \rho^{k\alpha} \nabla w^{\pm}(0), \qquad c_{k+1} := c_k + \rho^{k(1+\alpha)} w(0).
\]
We check that $\ell_{k+1}$ satisfies~\eqref{g1}, \eqref{g2}, \eqref{g3}, and \eqref{g4} with $k+1$.
Since $w$ satisfies the transmission condition from~\eqref{rep1} in the classical sense, evaluating it at $0$ we see that
\[
G_k(\nabla w^{+}(0), \theta \nabla w^{-}(0), 0) = 0,
\]
which is equivalent to~\eqref{g1} by definition of $G_k$.
Note that~\eqref{rep4} can be written as 
\[
\|u - \ell_{k+1}\|_{L^{\infty}(B_{\rho^{k+1}})} 
\leq \|u - \ell_{k+1}\|_{L^{\infty}(B_{\frac{3}{2}\rho^{k+1}})} 
\leq \rho^{(k+1)(1+\alpha)},
\]
whence~\eqref{g2} follows.
By~\eqref{rep2}, we also have the estimates
\[
|a_{k+1}^{\pm} - a_k^{\pm}| = \rho^{k \alpha} |\nabla w^{\pm}(0)| 
\leq C_0 \rho^{k\alpha}
\qquad \text{ and } \qquad
|c_{k+1} - c_k|
= \rho^{k (1+\alpha)} |w(0)|
\leq C_0 \rho^{k(1+\alpha)},
\]
giving~\eqref{g3}.
Finally, we observe that
\[
\frac{(u - \ell_{k+1})(\rho^{k+1}x)}{\rho^{(k+1)(\alpha+1)}} = \frac{(v - \overline{\ell})(\rho x)}{\rho^{1+\alpha}} \quad \text{ for } x \in \overline{B}_{3/2},
\]
and hence~\eqref{g4} is equivalent to
\begin{equation}
\label{g5}
[v - \overline{\ell}]_{C^{\mao}(\overline{B}_{3\rho/4})} \leq 2 C_{\star} \rho^{1+\alpha-\mao}.
\end{equation}
Here, note that $v - \overline{\ell}$ is a viscosity solution to the transmission problem
\[
\begin{cases}
F^{\pm}_k(D^2 (v - \overline{\ell}), x) = f^{\pm}_k(x) & \text{ in } B_{3\rho/2}^{\pm},\\
G_k(\nabla w^{+}(0) + \nabla (v - \overline{\ell})^{+}, \theta \nabla w^{-}(0) + \theta \nabla (v - \overline{\ell})^{-}, x) = g_k(x) & \text{ on } T_{3\rho/2},
\end{cases}
\]
whence
\[
v - \overline{\ell} \in S(\lambda, \Lambda, \mu, \theta; f_k^{+}, f_k^{-}, g_k- G_k(\nabla w^{+}(0), \theta \nabla w^{-}(0), \cdot))
\qquad \text{ in } B_{3\rho/2}
\]
and invoking the $C^{\mao}$ estimate~\eqref{fund1}, we obtain
\begin{equation}
\label{g6}
\begin{split}
&\rho^{\mao} [v - \overline{\ell}]_{C^{\mao}(\overline{B}_{\rho})}\\
& \quad \leq 
C_{\star}\|v- \overline{\ell}\|_{L^{\infty}(B_{3\rho/2})} 
+ C_{\star}\rho \Big( \|f_k^{+}\|_{L^{n}(B_{1/2}^{+})} + \|f_k^{-}\|_{L^{n}(B_{1/2}^{-})}+ \|g_k\|_{L^{\infty}(T_{1/2})} \Big)\\
& \qquad \quad + C_{\star} \rho \big\|G_k(\nabla w^{+}(0), \theta \nabla w^{-}(0), \cdot) - G_k(\nabla w^{+}(0), \theta \nabla w^{-}(0), 0)\big\|_{L^{\infty}(T_{1/2})}\\
& \qquad \qquad \leq C_{\star} \left( \rho^{1+\alpha} + \rho \left[ 
2 \delta + 2 (C_1 +1) \delta 
+ (|\nabla w^{+}(0)| + |\nabla w^{-}(0)| + 1)\|\gamma_{G_k}\|_{L^{\infty}(T_{1/2})}
\right]\right)\\
& \qquad \qquad \quad \leq C_{\star} \left( \rho^{1+\alpha}
+ 2 \rho \delta \left[2 + C_1 + (2 C_0 + 1) (2 C_1 + 1) 
\right]\right),
\end{split}
\end{equation}
where we have used the bounds~\eqref{rep4}, \eqref{ue2}, and~\eqref{rep2}.
Choosing $\delta > 0$ smaller such that
\[
\delta \leq \frac{\rho^{\alpha}}{2 \left[2 + C_1 + (2 C_0 + 1)(2 C_1 + 1) 
\right]},
\]
by~\eqref{g6}, we obtain~\eqref{g5}.
\end{proof}

\medskip

\appendix

\section{Gluing moduli of continuity up to the boundary}
\label{app:boundary}

Here, we give the technical details to complete the proof of Theorem~\ref{thm:barriers}.

\medskip

Let $u$ be a function in $C(\overline{B}_{1})$ with the following three types of moduli of continuity:
\begin{itemize}
\item A modulus $\omega$ such that 
\[
|u(x_0) - u(y_0)| \leq \omega(|x_0 - y_0|) \qquad \text{ for all } x_0 \text{ and } y_0 \text{ on }  \partial B_1.
\]
\item For each $\kappa > 0$, a modulus $\omega_{\kappa}$ such that
\[
|u(x) - u(x_0)| \leq \omega_{\kappa}(|x-x_0|) \qquad \text{ for all } x \in B_1 \text{ and } x_0 \in \{|x_n| \geq \kappa\} \cap \partial B_1;
\]
\item A modulus $\omega_0$ such that
\[
|u(x) - u(x_0)| \leq \omega_{0}(|x-x_0|) \qquad \text{ for all } x \in B_1 \text{ and } x_0 \in \{x_n = 0 \} \cap \partial B_1;
\]
\end{itemize}

\medskip

Note here that $\omega_{\kappa}$ could degenerate as $\kappa \downarrow 0$.

\medskip

In the proofs below, we denote the projection of $x \neq 0$ onto the sphere $\partial B_1$ by $\pi[x] = \frac{x}{|x|}$.
We also write $d_x = \textrm{dist}(x, \partial B_1) = |x - \pi[x]| = 1 -|x|$ for the distance.

\medskip

\begin{lem}
There is a modulus of continuity $\widetilde{\omega}$ depending only on $\omega$, $\{\omega_{\kappa}\}_{\kappa > 0}$, and $\omega_0$ such that
\[
|u(x) - u(x_0)| \leq \widetilde{\omega}(|x-x_0|) \quad \text{ for all } x \in \overline{B}_{1} \text{ and } x_0 \in \partial B_1.
\]
\end{lem}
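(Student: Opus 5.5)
Fix $\varepsilon>0$. We want $\delta>0$, depending only on $\omega$, $\{\omega_\kappa\}$ and $\omega_0$, such that $|u(x)-u(x_0)|\le 3\varepsilon$ whenever $x_0\in\partial B_1$, $x\in\overline B_1$ and $|x-x_0|\le\delta$. The modulus $\widetilde\omega$ is then obtained by the usual construction, taking $\widetilde\omega(r)$ to be the infimum over such $\varepsilon$ (after taking an upper envelope so that it is nondecreasing). The only difficulty is that $\omega_\kappa$ may degenerate as $\kappa\downarrow0$. The remedy is to split into two cases according to whether $x_0$ is far from or close to the equator $\{x_n=0\}\cap\partial B_1$, with the threshold $\kappa$ fixed in terms of $\omega_0$ and $\omega$ before $\delta$ is chosen.

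First choose $r_0>0$ such that $\omega_0(2r_0)\le\varepsilon$ and $\omega(2r_0)\le\varepsilon$, and set $\kappa:=r_0$. With $\kappa$ now fixed, choose $\delta\le r_0$ such that $\omega_\kappa(\delta)\le\varepsilon$.

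\emph{Case $|(x_0)_n|\ge\kappa$.} Here the second type of modulus applies directly: for $x\in B_1$ we have $|u(x)-u(x_0)|\le\omega_\kappa(|x-x_0|)\le\varepsilon$. For $x\in\partial B_1$ the first modulus gives $\omega(\delta)\le\varepsilon$.

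\emph{Case $|(x_0)_n|<\kappa$.} Pick a point $z_0\in\{x_n=0\}\cap\partial B_1$ with $|z_0-x_0|\le C\kappa$; for example $z_0=(x_0',0)/|x_0'|$, which satisfies $|z_0-x_0|\le 2|(x_0)_n|$ when $\kappa$ is small. Then split
\[
|u(x)-u(x_0)|\le|u(x)-u(z_0)|+|u(z_0)-u(x_0)|.
\]
For the second term, both points lie on $\partial B_1$, so it is at most $\omega(|z_0-x_0|)$. For the first term, the third type of modulus gives at most $\omega_0(|x-z_0|)$ if $x\in B_1$, and $\omega$ gives the same type of bound if $x\in\partial B_1$. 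Since $|x-z_0|\le\delta+C\kappa$, it suffices to choose $r_0$ (hence $\kappa$) so that $\omega_0((C+1)r_0)$ and $\omega((C+1)r_0)$ are both at most $\varepsilon$; I replace the factor $2r_0$ above by this $(C+1)r_0$.

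The main obstacle is precisely the order of choices. Since $\omega_\kappa$ degenerates, $\kappa$ must be fixed first, using only $\omega_0$ and $\omega$, and only afterwards can $\delta$ be chosen from $\omega_\kappa$. The elementary estimate $|z_0-x_0|\lesssim|(x_0)_n|$ near the equator also needs checking: it follows from $|x_0'|=\sqrt{1-(x_0)_n^2}\ge 1-(x_0)_n^2$. The resulting function $\varepsilon\mapsto\delta(\varepsilon)$ depends only on the three given moduli, so inverting it yields $\widetilde\omega$ as claimed.
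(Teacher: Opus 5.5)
Your proof is correct, but it uses a somewhat different decomposition than the paper's.

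\textbf{The paper's route.} The paper routes everything through the radial projection $\pi[x]=x/|x|$ of the \emph{interior} point. It bounds $|u(\pi[x])-u(x_0)|\le\omega(4\delta)$ and then splits according to $|x_n|$. When $|x_n|\ge\kappa_0$, it uses $\omega_{\kappa_0}(d_x)$ at $\pi[x]$. When $|x_n|\le\kappa_0$, it compares both $x$ and $\pi[x]$ with the equator point $(x'/|x'|,0)$. That second case costs extra geometric estimates: $|x'|\ge 1/4$, $1-|x'|\le 2\kappa_0$, and bounds on $|x-(x'/|x'|,0)|$ and $|\pi[x]-(x'/|x'|,0)|$.

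\textbf{Your route.} You split according to the \emph{boundary} point $x_0$ instead. Away from the equator, the hypothesis on $\omega_\kappa$ applies to the pair $(x,x_0)$ verbatim. Near the equator, the only geometry you need is $|z_0-x_0|\le\sqrt2\,|(x_0)_n|$ for two points of the sphere.

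\textbf{Comparison.} Your version is shorter, and it is also slightly more careful in two places:
\begin{enumerate}
\item You use $\omega$ for the boundary--boundary pair $(z_0,x_0)$. The paper applies $\omega_0$ to $(\pi[x],(x'/|x'|,0))$, two points of $\partial B_1$ that lie outside the stated range of $\omega_0$. This is harmless, via continuity of $u$ and monotonicity of $\omega_0$, or by using $\omega$ instead.
\item You impose $\delta\le\kappa$ explicitly, whereas the paper uses it tacitly in the bound $1-|x'|\le 2\kappa_0$.
\end{enumerate}

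The essential idea is the same in both: fix $\kappa$ from $\omega_0$ and $\omega$ before choosing $\delta$ from the possibly degenerating $\omega_\kappa$.
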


\begin{proof}
Let $\vep > 0$.
We must find a $\delta > 0$ depending only on $\vep$ and the given moduli, such that
\begin{equation}
\label{tri0}
|u(x) - u(x_0)| \leq \vep \quad \text{ whenever } |x - x_0| \leq \delta, \text{ with } x \in B_1 \text{ and } x_0 \in \partial B_1.
\end{equation}

\medskip

Assume that $|x - x_0| \leq \delta$, with $0 < \delta \leq 1/2$ small to be determined below.
By triangle inequality
\begin{equation}
\label{tri1}
|u(x) - u(x_0)| \leq |u(x) - u(\pi[x])| + |u(\pi[x]) - u(x_0)|.
\end{equation}

Since $|x| \geq |x_0| - |x- x_0| \geq 1/2$ and $d_x \leq |x- x_0|$, we have  that
\[
|\pi[x] - x_0 | = 
\left|\frac{x}{|x|} - x_0\right| 
= \frac{1}{|x|}\big|x - x_0 + (1-|x|) x_0\big|
\leq \frac{|x- x_0|}{|x|} +\frac{d_x}{|x|}
\leq 4 \delta
\]
and the second term on the right-hand side of~\eqref{tri1} can be controlled by the modulus on $\partial B_1$ as
\begin{equation}
\label{tri2}
|u(\pi[x]) - u(x_0)| \leq \omega(4 \delta).
\end{equation}

\medskip

To bound the first term, fix $0 < \kappa_0 \leq 1/4$ small to be chosen below. 
We distinguish two cases:
\begin{itemize}
\item If $|x_n| \geq \kappa_0$, then by the modulus $\omega_{\kappa_0}$, we have
\begin{equation}
\label{tri3}
|u(x) - u(\pi[x])| \leq \omega_{\kappa_0}(d_x) \leq \omega_{\kappa_0}(\delta).
\end{equation}
\item If $|x_n| \leq \kappa_0$, then we have the additional bounds
\begin{equation}
\label{eq:bod1}
|x'| \geq |x| - |x_n| \geq 1/2 - \kappa_0 \geq 1/4,
\end{equation}
and
\begin{equation}
\label{eq:bod2}
1 - |x'| \leq 1 - |x| + |x_n| < \delta + \kappa_0 \leq 2 \kappa_0.
\end{equation}
By triangle inequality, using the modulus $\omega_0$
\begin{equation}
\label{bodo}
\begin{split}
|u(x) - u(\pi[x])| &\leq \textstyle \left|u(x) - u(\frac{x'}{|x'|},0)\right| + \left| u(\pi[x]) - u(\frac{x'}{|x'|},0)\right|\\
& \quad \leq \textstyle 
\omega_0\left(\left|x -(\frac{x'}{|x'|},0) \right|\right) +  \omega_0\left(\left|\frac{x}{|x|} - (\frac{x'}{|x'|},0) \right| \right).
\end{split}
\end{equation}
Note that by~\eqref{eq:bod2}, we have
\begin{equation}
\label{cris1}
\textstyle
\left|x -(\frac{x'}{|x'|},0) \right| = \sqrt{(1- |x'|)^2 +x_n^2}
\leq \sqrt{5}\kappa_0 < 2 \sqrt{2} \kappa_0.
\end{equation}
On the other hand, by concavity of the square root
\[
|x| = \sqrt{|x'|^2+x_n^2} \leq |x'| + \frac{1}{2 |x'|}x_n^2,
\]
whence, by~\eqref{eq:bod1}, we deduce
\begin{equation}
\label{cris2}
\textstyle
\left|\frac{x}{|x|} - (\frac{x'}{|x'|},0) \right| = \sqrt{2}\sqrt{\frac{|x|-|x'|}{|x|}}
\leq \frac{|x_n|}{\sqrt{|x||x'|}}
\leq 2 \sqrt{2} \kappa_0.
\end{equation}
Applying~\eqref{cris1} and~\eqref{cris2} in~\eqref{bodo}, we conclude that
\begin{equation}
\label{tri4}
\begin{split}
|u(x) - u(\pi[x])| 
\leq 2 \omega_0(2 \sqrt{2} \kappa_0).
\end{split}
\end{equation}
\end{itemize}

\medskip

To finish the argument,
first choose $\kappa_0 > 0$ such that 
\[
2 \omega_0(2 \sqrt{2}\kappa_0) \leq \vep/2
\]
and then choose $\delta > 0$ such that
\[
\max\{ \omega(4\delta), \omega_{\kappa_0}(\delta) \} \leq \vep/2.
\]
Combining~\eqref{tri2},~\eqref{tri3},~and~\eqref{tri4} to control~\eqref{tri1} now yields the claim~\eqref{tri0}.
\end{proof}

\medskip

Assume additionally that $u$ satisfies the following interior H\"{o}lder estimate in $B_1$:
\[
R^{\beta} [u - c]_{C^{\beta}(\overline{B}_{R/2}(x))} \leq 
C \left( 
\|u - c\|_{L^{\infty}(B_R(x))} + R K
\right) 
\quad
\text{ for all } c \in \R \text{ and } B_R(x) \subset B_1,
\]
for some constants $0 < \beta < 1$, $C > 0$, and $K \geq 0$.
Solutions to transmission problems satisfy such an estimate by Corollary~\ref{cor:trueint}, with $\beta$ and $C$ universal and $K = \|f^{+}\|_{L^n(B_1^{+})} + \|f^{-}\|_{L^n(B_1^{-})} + \|g\|_{L^{\infty}(T_1)}$.

\medskip

\begin{lem}
There is a modulus of continuity $\omega^{\star}$ depending only on
$\widetilde{\omega}$, $\beta$, $C$, and $K$ such that
\[
|u(x) - u(y)| \leq \omega^{\star}(|x-y|) \qquad \text{ for all } x \text{ and } y \text{ in } \overline{B}_{1}.
\]
\end{lem}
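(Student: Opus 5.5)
We will glue the boundary modulus $\widetilde{\omega}$ from the previous lemma with the interior H\"{o}lder estimate, splitting according to how $|x-y|$ compares with the distances $d_x$ and $d_y$ to $\partial B_1$. Fix $\vep>0$. We look for $\delta>0$, depending only on $\vep$, $\widetilde{\omega}$, $\beta$, $C$, and $K$, such that $|u(x)-u(y)|\le \vep$ whenever $x,y\in\overline{B}_1$ satisfy $|x-y|\le\delta$. By symmetry we may assume $d_x\le d_y$. The key parameter is an intermediate scale $r_0>0$, which we choose later so that $\widetilde{\omega}(2 r_0)$ is small.

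\emph{Case 1: $d_x \le r_0$ (the point $x$ is near the boundary).} Then $d_y\le d_x+|x-y|\le r_0+\delta$. We compare both points with the common boundary point $\pi[x]$ and use the previous lemma twice:
\[
|u(x)-u(y)|\le \widetilde{\omega}(d_x)+\widetilde{\omega}(|y-\pi[x]|)\le \widetilde{\omega}(r_0)+\widetilde{\omega}(r_0+\delta).
\]
This is at most $2\widetilde{\omega}(2r_0)$ once $\delta\le r_0$. If $\widetilde{\omega}$ is not monotone, we first replace it by its upper monotone envelope $\sup_{s\le t}\widetilde{\omega}(s)$, which is still a modulus.

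\emph{Case 2: $d_x> r_0$ (both points are interior at scale $r_0$).} Set $R=d_x$, so that $B_R(x)\subset B_1$. If $\delta\le R/2$, then $y\in \overline{B}_{R/2}(x)$. We apply the interior estimate with $c=u(x)$ and control the oscillation on $B_R(x)$ through the boundary modulus:
\[
\|u-u(x)\|_{L^\infty(B_R(x))}\le 2\sup_{z\in B_R(x)}|u(z)-u(\pi[x])|\le 2\widetilde{\omega}(2R)\le 2\widetilde{\omega}(2),
\]
since every $z\in B_R(x)$ satisfies $|z-\pi[x]|\le 2R$. This gives
\[
|u(x)-u(y)|\le C R^{-\beta}\big(2\widetilde{\omega}(2)+RK\big)|x-y|^{\beta}\le C r_0^{-\beta}\big(2\widetilde{\omega}(2)+K\big)\delta^{\beta}.
\]
The alternative $\delta> R/2$ cannot occur once $\delta\le r_0/2$. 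If desired, the bound $\widetilde{\omega}(2)$ can be replaced by a global bound on $\operatorname{osc} u$; either choice depends only on the allowed data.

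\emph{Choice of constants.} First choose $r_0$ so that $2\widetilde{\omega}(2r_0)\le\vep$. Then choose $\delta\le r_0/2$ so that $C r_0^{-\beta}(2\widetilde{\omega}(2)+K)\delta^{\beta}\le\vep$. Taking $\omega^\star(t)$ to be the infimum of the admissible $\vep$ for $\delta=t$ then gives the desired modulus.

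The main obstacle is the bookkeeping in Case 2. The interior H\"{o}lder constant blows up like $R^{-\beta}$ as $x$ approaches the boundary, so it must be paired with a choice of $r_0$ that is independent of $x$. This is exactly why we fix the threshold $r_0$ first and only afterwards shrink $\delta$.
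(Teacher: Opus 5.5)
Your argument is correct, but it is organized differently from the paper's. The paper splits according to the \emph{relative} size of the distances and $|x-y|$. If $\max\{d_x,d_y\}\le 2|x-y|$, it passes through the two projections $\pi[x]$ and $\pi[y]$. If $d_x\ge 2|x-y|$, it applies the interior estimate on $B_{d_x}(x)$. This forces a further subdivision at a threshold $\delta_0$: when $d_x\le\delta_0$ it uses $(|x-y|/d_x)^{\beta}\le 1$ together with smallness of $\widetilde{\omega}(d_x)$, and when $d_x>\delta_0$ it uses the factor $\delta_0^{-\beta}\delta^{\beta}$.

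You split once, at a fixed threshold $r_0$ on the point nearer to $\partial B_1$. You also observe that the near-boundary regime needs no interior estimate at all: $\widetilde{\omega}$ controls $|u(z)-u(x_0)|$ for every $z\in\overline{B}_1$ and $x_0\in\partial B_1$, and both $x$ and $y$ lie within $r_0+\delta$ of the single boundary point $\pi[x]$. This absorbs both the paper's Case~1 and its subcase $d_x\le\delta_0$, which could indeed be treated this way. The interior estimate is then only invoked at scales $R>r_0$.

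Your bound $\|u-u(x)\|_{L^\infty(B_R(x))}\le 2\widetilde{\omega}(2R)$ is also the accurate one. The paper writes $\widetilde{\omega}(d_x)$, although points of $B_{d_x}(x)$ can be almost $2d_x$ away from $\pi[x]$; this slip is harmless.

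What the paper's relative splitting buys is scale invariance. The term $\widetilde{\omega}(2d_x)\,d_x^{-\beta}|x-y|^{\beta}$ with $|x-y|\le d_x/2$ shows, for instance, that a boundary modulus $\widetilde{\omega}(t)\le At^{\gamma}$ yields a global H\"{o}lder modulus with exponent $\min\{\gamma,\beta\}$. Your fixed threshold, combined with the bound $\widetilde{\omega}(2R)\le\widetilde{\omega}(2)$, gives at best the exponent $\gamma\beta/(\gamma+\beta)$. For the qualitative statement of the lemma this difference is immaterial, and your route is the shorter one.
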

\begin{proof}
Let $\vep > 0$.
We will find a $\delta > 0$ depending only on $\vep$ and the above data such that
\[
|u(x) - u(y)| \leq \vep \quad \text{ whenever } |x- y| \leq \delta.
\]
Let $|x- y| \leq \delta$, with $0 < \delta \leq 1/4$ small to be determined below.
There are two cases:

\medskip

\noindent
\textbf{Case~1.} $\max\{d_x, d_y\} \leq 2 |x-y|$.

\medskip

Since $|x| = 1- d_x \geq 1 - 2 \delta \geq 1/2$, we have that
\[
\big|\pi[x] - \pi[y]\big| = 
\left|\frac{x}{|x|} - \frac{y}{|y|}\right| 
\leq \frac{2}{|x|} |x-y|
\leq 4 |x-y| \leq 4 \delta,
\]
and by the boundary modulus
\[
\begin{split}
|u(x) - u(y)| &\leq 
|u(x) - u(\pi[x])| + |u(\pi[x]) - u(\pi[y])| +|u(y) - u(\pi[y])| \\
& \quad \leq 
\widetilde{\omega}(d_x) + \widetilde{\omega}(|\pi[x]-\pi[y]|) + \widetilde{\omega}(d_y)\\
& \qquad \leq 
3 \widetilde{\omega}(4 \delta)\\
& \qquad \quad \leq \vep
\end{split}
\]
for $\delta > 0$ sufficiently small.

\medskip

\noindent
\textbf{Case~2.} $d_x \geq 2 |x-y|$.

\medskip

Since $B_{2|x-y|}(x) \subset B_{d_x}(x) \subset B_1$, by the interior estimate, we have that
\[
d_x^{\beta} [u- u(\pi[x])]_{C^{\beta}(\overline{B}_{d_x/2}(x))}
\leq C \left( 
\|u - u(\pi[x])\|_{L^{\infty}(B_{d_x}(x))}
+ d_x K
\right)
\]
and applying the boundary modulus
\[
\|u - u(\pi[x])\|_{L^{\infty}(B_{d_x}(x))} \leq \widetilde{\omega}(d_x),
\]
we conclude
\[
|u(x)-u(y)| \leq
C \left( \widetilde{\omega}(d_x) d_x^{-\beta} + d_x^{1-\beta} K \right) |x-y|^{\beta}.
\]
Let $0 < \delta_0 < 1$ small to be chosen next.
There are two subcases:
\begin{itemize}
\item
If $d_x \leq \delta_0$, then
\[
\begin{split}
|u(x) - u(y)| & \leq C \Big( \widetilde{\omega}(d_x) + d_x K
\Big)
\left(\frac{|x-y|}{d_x}\right)^{\beta}
\\
& \quad \leq C \Big( \widetilde{\omega}(\delta_0) +  \delta_0 K  \Big)\\
& \qquad \leq \vep
\end{split}
\]
for $\delta_0 > 0$ sufficiently small.
\item If $d_x > \delta_0$, then
(using that $d_x \leq 1$ and $0 < \beta < 1$)
\[
|u(x)-u(y)| 
\leq C \left( \widetilde{\omega}(1) \delta_0^{-\beta} + K \right) \delta^{\beta}
\leq \vep
\]
choosing $\delta > 0$ sufficiently small.
\end{itemize}
\end{proof}

\end{document}